\numberwithin{equation}{section}
\title{\bf Linear-Quadratic Mean Field Stackelberg Stochastic Differential Game with Partial Information and Common Noise \thanks{This work is supported by National Key R\&D Program of China (2022YFA1006104), National Natural Science Foundations of China (12471419, 12271304), and Shandong Provincial Natural Science Foundations (ZR2024ZD35, ZR2022JQ01).}}
\author{\normalsize  Yu Si\thanks{\it School of Mathematics, Shandong University, Jinan 250100, P.R. China, E-mail: 202112003@mail.sdu.edu.cn} , Jingtao Shi\thanks{\it Corresponding author. School of Mathematics, Shandong University, Jinan 250100, P.R. China, E-mail: shijingtao@sdu.edu.cn}}
\date{}
\newtheorem{mypro}{Proposition}[section]
\newtheorem{assumption}{Assumption}[section]
\newtheorem{remark}{Remark}[section]
\begin{document}

\maketitle

\noindent{\bf Abstract:}\quad This paper is concerned with a linear-quadratic mean field Stackelberg stochastic differential game with partial information and common noise, which contains a leader and a large number of followers. To be specific, the followers face a large population Nash game after the leader first announces his strategy, while the leader will then optimize his own cost functional on consideration of the followers' reactions. The state equation of the leader and followers are both general stochastic differential equations, where the diffusion terms contain both the control and state variables.  However, the followers' average state terms enter into the drift term of the leader's state equation, reflecting that the leader's state is influenced by the followers' states. By virtue of stochastic maximum principle with partial information and optimal filter technique, we deduce the open-loop adapted decentralized strategies and feedback decentralized strategies of this leader-followers system, and demonstrate that the decentralized strategies are the corresponding $\varepsilon$-Stackelberg-Nash equilibrium.

\vspace{2mm}

\noindent{\bf Keywords:}\quad Stackelberg stochastic differential game, mean field games, partial information, common noise, state-estimate feedback Stackelberg-Nash equilibrium, optimal filtering, system of coupled Riccati equations

\vspace{2mm}

\noindent{\bf Mathematics Subject Classification:}\quad 93E20, 60H10, 49K45, 49N70, 91A23

\section{Introduction}

Recently, the study of dynamic optimization in stochastic large-population systems has garnered significant attention. Distinguishing it from a standalone system, a large-population system comprises numerous agents, widely applied in fields such as engineering, finance and social science. In this context, the impact of a single agent is minimal and negligible, whereas the collective behaviors of the entire population are significant. All the agents are weakly coupled via the state average or empirical distribution in dynamics and cost functionals. Consequently, centralized strategies for a given agent, relying on information from all peers, are impractical. Instead, an effective strategy is to investigate the associated {\it mean-field games} (MFGs) to identify an approximate equilibrium by analyzing its limiting behavior. Along this research direction, we can obtain the decentralized strategies through the limiting auxiliary control problems and the related {\it consistency condition} (CC) system. The methodological foundations of MFGs, initially proposed by Lasry and Lions \cite{Lasry-Lions-2007} and independently by Huang et al. \cite{Huang-Caines-Malhame-2007, Huang-Malhame-Caines-2006}, have proven effective and tractable for analyzing weakly coupled stochastic controlled systems with mean field interactions, establishing approximate Nash equilibria. The interested readers can refer to can refer to \cite{Bardi-Priuli-2014, Du-Huang-Wu-2018, Hu-Huang-Li-2018, Hu-Huang-Nie-2018, Huang-Zhou-2020, Moon-Basar-2017, Wang-Zhang-2017} for {\it linear-quadratic} (LQ) MFGs, and refer to \cite{Bensoussan-Frehse-Yam-2013, Buckdahn-Djehiche-Li-Peng-2009, Carmona-Delarue-2013, Cong-Shi-2024, Huang-2010, Nguyen-Huang-2012} for further analysis of MFGs and related topics.

We mention that the above listed literature mainly focused on the case in which agents have access to full information. However, in the real world, usually agents can only get incomplete information at most cases. As far as I know, systems with incomplete information refer to those where the state cannot be fully observed by the agents. In mathematical terms, this means that the control variables are adapted to a smaller filteration than the full information. Specifically, there are mainly two cases: Firstly, the control variables are adapted to the information flow generated by some observation process, which is generally referred to as a partially observed problem; Secondly, the control variables are adapted to a sub-information flow generated by some Brownian motion, which is generally referred to as a partial information problem. Huang and Wang \cite{Huang-Wang-2016}  studied a class of dynamic optimization problems for a large-population system with partial information. Huang et al. \cite{Huang-Wang-Wu-2016} investigated a backward large-population system with partial information with full and partial information. Wang et al. \cite{Wang-Wu-Xiong-2018} studied the optimal control problems for {\it forward-backward stochastic differential equations} (FBSDEs) with incomplete information thoroughly.  \c Sen and Caines \cite{Sen-Caines-2016, Sen-Caines-2019} investigated the MFG problems with partial observation. Bensoussan et al.  \cite{Bensoussan-Feng-Huang-2021} focused on a class of linear-quadratic-Gaussian (LQG) MFGs with partial observation and common noise. Huang et al. \cite{Huang-Wang-Wang-Xiao-2024, Huang-Wang-Wang-Wang-2023} studied a forward-backward stochastic system MFG with partial observation and common noise and a backward stochastic system MFG with partial information and common noise, respectively.  Li et al. \cite{Li-Nie-Wu-2023} investigated a large-population problem with partial information. Readers can see \cite{Li-Nie-Wang-Yan-2023, Wang-Wang-Zhang-2020} and the references therein for further studies.

The Stackelberg differential game, also known as the leader-follower differential game, arises in markets where certain companies possess greater authority to dominate others or individual entities. von Stackelberg \cite{Stackelberg-1952} introduced the concept of the hierarchical solution. Yong \cite{Yong-2002} delved into a generalized framework of LQ stochastic leader-follower differential games.  Nourian et al. \cite{Nourian-Caines-Malhame-Huang-2012} studied a large-population LQ leader-follower stochastic multi-agent systems and established their ($\varepsilon_1,\varepsilon_2$)-Stackelberg-Nash equilibrium. Moon and Ba\c{s}ar in \cite{Moon-Basar-2018} investigated continuous-time mean field LQ Stackelberg stochastic differential games by the fixed-point method. Si and Wu \cite{Si-Wu-2021} explored a backward-forward LQ Stackelberg MFG, where the leader's state equation is backward, and the follower's state equation is forward. Feng and Wang \cite{Feng-Wang-2024} investigated an LQ social optimal problem with a leader and a amount of followers.  Wang \cite{Wang-2024} employed a direct method to solve LQ Stackelberg MFGs with a leader and a substantial number of followers.

In this paper, we consider a new class of LQ mean field Stackelberg stochastic differential games with partial information and common noise, which contains a leader and a number of followers. The
leader first announces his strategy and then each follower optimizes its own cost based on the leader's announcement. At last, the leader chooses his own optimal strategy based on the responses of the followers. Compared with the existing literatures, the contributions of this paper are listed as follows.
\begin{itemize}
  \item We introduced a general Stackelberg stochastic differential game with partial information and common noise. The common noise can not be observed by all agents, which force us to introduce a stochastic process as the frozen term, rather than a deterministic function, in the MFG analysis. Due to the framework of partial information, we need to use stochastic maximum principle with partial information and optimal filter technique to get the open loop decentralized optimal strategy and feedback decentralized optimal strategy of the auxiliary limiting problem.
  \item The state equation and cost functional of the leader both contain the state average term. This is motivated by real-world scenarios where all followers may have instantaneous and significant effects on the leader's dynamic. In addition, the state of the leader enters the cost functional of the followers, which means that changes in the leader can have an impact on the followers' evaluation criteria. This is consistent with the background where the leaders hold a dominant position. Therefore, complexity arises from the mean-field coupling term and highlight the strong coupling and interactivity among the state equations and cost functionals.
  \item The diffusion terms of all agents' state equations contain both the control and state variables, which makes the Hamiltonian system more complicated. And we apply decoupling technique to obtain the solvability of Hamiltonian system.
  \item Due to the application of dimension-extension techniques, the Riccati equations are asymmetric and complex, making their solvability quite challenging.
\end{itemize}

The rest of this paper is organized as follows. In Section 2, we formulate our problem. In Section 3, we introduce the auxiliary limiting problem and solve the problem of the followers and the leader in turn and derive the decentralized optimal strategies. In Section 4, we prove the decentralized optimal strategies are the $\varepsilon$-Stackelberg-Nash equilibria of the games. In Section 5, we provide a numerical example to show the effectiveness of our theoretical results. Finally, the conclusion is given in Section 6.

\section{Problem formulation}

Firstly, we introduce some notations that will be used throughout the paper. We consider a finite time interval $[0, T]$ for a fixed $T > 0$.
Let $\left(\Omega, \mathcal{F},  \left\{\mathcal{F}_t\right\}_{t\geq0},\mathbb{P}\right)$ be a complete filtered probability space, on which a standard $(N+1)$-dimensional Brownian motion $\left\{W(s), W_i(s), 1 \leq i \leq N\right\}_{0 \leq s \leq t}$ is defined. $\{\mathcal{F}_t\}$ is defined as the complete information of the system at time $t$. Then, for any $t \geq 0$, we have
$$
\mathcal{F}_t:=\sigma\left\{W_i(s), W_0(s), W(s) , 1 \leq i \leq N, 0 \leq s \leq t\right\}\vee \mathcal{N}_{\mathbb{P}},
$$
where $\mathcal{N}_{\mathbb{P}}$ is the class of all $\mathbb{P}$-null sets. Define the available information of the $i$-th follower $\mathcal{A}_i $ as
$$\mathcal{F}_t^{W_i}:=\sigma\left\{W_i(s), 0 \leq s \leq t\right\} \vee \mathcal{N}_{\mathbb{P}},$$
and the available information of the leader $\mathcal{A}_0 $ as
$$\mathcal{F}_t^{W_0}:=\sigma\left\{W_0(s), 0 \leq s \leq t\right\} \vee \mathcal{N}_{\mathbb{P}}.  $$

For $i=0,1,\ldots,N$, define the involved information of $\mathcal{A}_i $ as
$$\mathcal{F}_t^{W, W_i}:=\sigma\left\{W(s), W_i(s), 0 \leq s \leq t\right\} \vee \mathcal{N}_{\mathbb{P}},$$
and let the information that is not observable by everyone (common noise) as
$$\mathcal{F}_t^{W}:=\sigma\left\{W(s), 0 \leq s \leq t\right\} \vee \mathcal{N}_{\mathbb{P}}.$$
And define
$$
\begin{aligned}
 \mathcal{G}_t^l&:=\sigma\left\{W_i(s), 0\leq s\leq t, 0 \leq i \leq N\right\}\vee \mathcal{N}_{\mathbb{P}} ,\\
 \mathcal{G}_t^f&:=\sigma\left\{W_i(s), 0\leq s\leq t, 1 \leq i \leq N\right\}\vee \mathcal{N}_{\mathbb{P}} ,\\
 \mathcal{F}_t^{i}&:=\sigma\left\{W(s), W_0(s),  W_i(s), 0 \leq s \leq t\right\} \vee \mathcal{N}_{\mathbb{P}},\quad\mbox{for } i=1,\cdots,N.
\end{aligned}
$$

Let $\mathbb{R}^n$ be an $n$-dimensional Euclidean space with norm and inner product being defined as $|\cdot|$ and $\langle\cdot, \cdot\rangle$, respectively.
Next, we introduce three spaces. A bounded, measurable function $f(\cdot):[0, T] \rightarrow \mathbb{R}^n$ is denoted as $f(\cdot) \in L^{\infty}(0, T; \mathbb{R}^n)$. An $\mathbb{R}^n$-valued, $\mathbb{F}$-adapted stochastic process $f(\cdot): \Omega \times [0, T] \rightarrow \mathbb{R}^n$ satisfying $\mathbb{E} \int_0^T |f(t)|^2 dt < \infty$ is denoted as $f(\cdot) \in L_{\mathbb{F}}^2(0, T; \mathbb{R}^n)$. Similarly, an $\mathbb{R}^n$-valued, $\mathcal{F}_{T}$-measurable random variable $\xi$ with $\mathbb{E} \xi^2 < \infty$ is denoted as $\xi \in L_{\mathcal{F}_T}^2(\Omega, \mathbb{R}^n)$.

For any random variable or stochastic process $X$ and filtration $\mathcal{H}$, $\mathbb{E}X$ and $\mathbb{E}[X|\mathcal{H}]$ represent the mathematical expectation and conditional mathematical expectation of $X$, respectively. For a given vector or matrix \(M\), let \(M^{\top}\) represent its transpose. We denote the set of symmetric \(n \times n\) matrices (resp. positive semi-definite matrices) with real elements by \(\mathcal{S}^n\) (resp. \(\mathcal{S}_{+}^n\)). If \(M \in \mathcal{S}^n\) is positive (semi) definite, we abbreviate it as \(M > (\geq) 0\). For a positive constant \(k\), if \(M \in \mathcal{S}^n\) and \(M > kI\), we label it as \(M \gg 0\).

Now, let us focus on a comprehensive population system comprised of $N+1$ individual agents, denoted as $\left\{\mathcal{A}_i\right\}_{0 \leq i \leq N}$. The state $x_0(\cdot)\in \mathbb{R}$ of the leader $\mathcal{A}_0$ is given by the following linear $\mathrm{SDE}$
\begin{equation}\label{leader state}
\left\{\begin{aligned}
d x_0(t)= &\ \left[A_0 x_0(t)+B_0 u_0(t)+E_0 x^{(N)}(t)+b_0\right] d t  \\
& +\left[C_0 x_0(t)+D_0 u_0(t)+F_0 x^{(N)}(t)+\sigma_0\right] d W_0(t) \\
& +\left[\widetilde{C}_0 x_0(t)+\widetilde{D}_0 u_0(t)+\widetilde{F}_0 x^{(N)}(t)+\widetilde{\sigma}_0\right] d W(t) , \\
x_0(0)= &\ \xi_0,
\end{aligned}\right.
\end{equation}
where $\xi_0 \in \mathbb{R}$ represents the initial state, and $x^{(N)}(\cdot) := \frac{1}{N} \sum_{i=1}^N x_i(\cdot)$ signifies the average state of the followers. The corresponding coefficients $A_0(\cdot)$, $B_0(\cdot)$, $C_0(\cdot)$, $D_0(\cdot)$, $E_0(\cdot)$, $F_0(\cdot)$, $\widetilde{C}_0(\cdot)$, $\widetilde{D}_0(\cdot)$, $\widetilde{F}_0(\cdot)$, $b_0(\cdot)$, $\sigma_0(\cdot)$, $\widetilde{\sigma}_0(\cdot)$ are deterministic $\mathbb{R}$-valued functions.

The state $x_i(\cdot)$ of the follower $\mathcal{A}_i$ is given by the following linear $\mathrm{SDE}$
\begin{equation}\label{follower state}
\left\{\begin{aligned}
d x_i(t)  =&\ \left[A_1 x_i(t)+B_1 u_i(t)+E_1 x^{(N)}(t)+b_1\right] d t \\
& +\left[C_1 x_i(t)+D_1 u_i(t)+F_1 x^{(N)}(t)+\sigma_1\right] d W_i(t) \\
& +\left[\tilde{C}_1 x_i(t)+\widetilde{D}_1 u_i(t)+\widetilde{F}_1 x^{(N)}(t)+\widetilde{\sigma}_1\right] d W(t) , \\
x_i(0)  =&\ \xi,
\end{aligned}\right.
\end{equation}
where $\xi \in \mathbb{R}$ represents the initial state, the coefficients $A_1(\cdot)$, $B_1(\cdot)$, $C_1(\cdot)$, $D_1(\cdot)$, $E_1(\cdot)$, $F_1(\cdot)$, $\widetilde{C}_1(\cdot)$, $\widetilde{D}_1(\cdot)$, $\widetilde{F}_1(\cdot)$, $b_1(\cdot)$, $\sigma_1(\cdot)$, $\widetilde{\sigma}_1(\cdot)$ are deterministic $\mathbb{R}$-valued functions.

The decentralized admissible control set $\mathcal{U}^{l,d}_0$ of the leader $\mathcal{A}_0$ is defined as
$$
\mathcal{U}^{l,d}_0:=\left\{u_0(\cdot) \mid u_0(\cdot) \in L_{\mathcal{F}_t^{W_0}}^2\left(0, T ; \mathbb{R}\right)\right\},
$$
and the admissible centralized control set $\mathcal{U}_{0}^{l,c}$ of the leader $\mathcal{A}_0$ is defined as
$$
\mathcal{U}_0^{l,c}:=\left\{u_0(\cdot) \mid u_0(\cdot) \in L_{\mathcal{G}_t^l}^2\left(0, T ; \mathbb{R}\right)\right\} .
$$
For $i=1, \cdots, N$, the decentralized admissible control set $\mathcal{U}_{i}^{f,d}$ of the $i$-th follower $\mathcal{A}_i$ is defined as
$$
\mathcal{U}_{i}^{f,d}:=\left\{u_i(\cdot) \mid u_i(\cdot) \in L_{\mathcal{F}_t^{W_i}}^2\left(0, T ; \mathbb{R} \right)\right\},
$$
and the centralized admissible control set $\mathcal{U}_{i}^{f,c}$ of the $i$-th follower $\mathcal{A}_i$ is defined as
$$
\mathcal{U}_{i}^{f,c}:=\left\{u_i(\cdot) \mid u_i(\cdot) \in L_{\mathcal{G}_t^f}^2\left(0, T ; \mathbb{R}\right)\right\} .
$$

For the leader $\mathcal{A}_0$, the cost functional is defined by
\begin{equation}\label{leader cost}
\mathcal{J}_0\left(u_0(\cdot)\right)=\frac{1}{2} \mathbb{E} \int_0^T\left[R_0(t) u_0^2(t)+Q_0(t)\left(x_0(t)-x^{(N)}(t)\right)^2\right] d t+G_0 x_0(T)^2,
\end{equation}
where $Q_0(\cdot)$, $R_0(\cdot)$ are deterministic functions and $G_0$ is a constant. Let $u(\cdot)\equiv\left(u_1(\cdot), \ldots, u_N(\cdot)\right)$ be the set of control strategies of all followers and $u_{-i}(\cdot)\equiv\left(u_1(\cdot), \ldots, u_{i-1}(\cdot), u_{i+1}(\cdot), \ldots, u_N(\cdot)\right)$ be the set of control strategies except for $i$-th follower $\mathcal{A}_i$. For $i=1, \cdots, N$, the cost functional of the $i$-th follower $\mathcal{A}_i$ is defined by
\begin{equation}\label{follower cost}
\begin{aligned}
\mathcal{J}_i\left(u_i(\cdot), u_{-i}(\cdot)\right)
&=\frac{1}{2} \mathbb{E} \bigg\{\int_0^T \left[R_1(t) u_i^2(t)+Q_1(t)\left[x_i(t)-\left(\lambda x_0(t)+(1-\lambda) x^{(N)}(t)\right)\right]^2\right] dt\\
&\qquad\qquad + G_1 x_i(T)^2\bigg\},
\end{aligned}
\end{equation}
where $Q_1(\cdot)$, $R_1(\cdot)$ are deterministic functions and $G_1$ is a constant. Here $\lambda\in[0,1]$ is a constant.

Moreover, we introduce the following assumptions of coefficients.
\begin{assumption}\label{A2.1}
$A_0(\cdot),B_0(\cdot), E_0(\cdot),  C_0(\cdot), D_0(\cdot), F_0(\cdot),\widetilde{C}_0(\cdot), \widetilde{D}_0(\cdot), \widetilde{F}_0(\cdot), A_1(\cdot), B_1(\cdot), E_1(\cdot),\\  C_1(\cdot), D_1(\cdot), F_1(\cdot), \widetilde{C}_1(\cdot), \widetilde{D}_1(\cdot), \widetilde{F}_1(\cdot), b_0(\cdot), \sigma_0(\cdot), \widetilde{\sigma}_0(\cdot), b_1(\cdot), \sigma_1(\cdot), \widetilde{\sigma}_1(\cdot)\in L^{\infty}\left(0, T ; \mathbb{R}\right)$.
\end{assumption}
\begin{assumption}\label{A2.2}
$ Q_0(\cdot), Q_1(\cdot), R_0(\cdot), R_1(\cdot) \in L^{\infty}\left(0, T ; \mathbb{R}\right), Q_0(\cdot), Q_1(\cdot) \geq 0, R_0(\cdot), R_1(\cdot) \gg 0;\\ G_0(\cdot), G_1(\cdot) \in \mathbb{R}, G_0(\cdot), G_1(\cdot) \geq 0.$
\end{assumption}

We mention that under assumption \ref{A2.1}, the system of SDEs (\ref{leader state}), (\ref{follower state}) admits a unique solution $(x_0(\cdot),x_1(\cdot),\cdots,x_N(\cdot))\in L_{\mathcal{F}_t}^2(0, T; \mathbb{R}^{1+N})$. Under assumption \ref{A2.2}, the cost functionals (\ref{leader cost}), (\ref{follower cost}) are well-defined.

Our {\it LQ mean field Stackelberg stochastic differential game with partial information and common noise}, investigated in this paper, can be stated as follows.

\newtheorem{prob}{Problem}[section]
\begin{prob}\label{problem centralized}
Finding a set of strategy $u^*(\cdot) \equiv\left(u_0^*(\cdot), u_1^*(\cdot), \cdots, u_N^*(\cdot)\right)$ satisfying the following conditions:
	
(i) for given $u_0(\cdot) \in \mathcal{U}_{0}^{l,c}$, the control stategy $u_i^*(\cdot)$ is a mapping $u_i^*:\mathcal{U}_{0}^{l,c}\rightarrow\mathcal{U}_i^{f,c}$ satisfying
\begin{equation*}
\mathcal{J}_i\left(u^*_i[u_0](\cdot), u^*_{-i}[u_0](\cdot)\right)=\inf _{u_i(\cdot) \in\, \mathcal{U}_i^{f,c}} \mathcal{J}_i\left(u_i(\cdot), u^*_{-i}[u_0](\cdot)\right), \text{ for any } 1 \leq i \leq N;
\end{equation*}
where $u^*_{-i}[u_0](\cdot) \equiv\left(u_1^*[u_0](\cdot), \cdots, u_{i-1}^*[u_0](\cdot), u_{i+1}^*[u_0](\cdot), \cdots, u_N^*[u_0](\cdot)\right)$,

(ii) the control strategy  $u^*_0(\cdot)$ of the leader $\mathcal{A}_0$ satisfies
\begin{equation*}
\mathcal{J}_0\left(u^*_0(\cdot)\right)=\inf _{u_0(\cdot) \in\, \mathcal{U}_{0}^{l,c}} \mathcal{J}_0\left(u_0(\cdot)\right).
\end{equation*}
\end{prob}

We call $\left(u_0^*(\cdot), u_1^*(\cdot), \cdots, u_N^*(\cdot)\right)$ the {\it Stackelberg-Nash equilibrium} of Problem \ref{problem centralized}. Moreover, the corresponding state $\left(x_0^*(\cdot), x_1^*(\cdot), \cdots, x_N^*(\cdot)\right)$ is called the {\it optimal centralized trajectory}.

\section{Limiting Stackelberg-Nash equilibria}

Since the intricate nature arising from the coupling of the state-average $x^{(N)}(\cdot):= \frac{1}{N} \sum_{i=1}^N x_i(\cdot)$, Problem \ref{problem centralized} becomes challenging to investigate. We shall employ the MFG theory to seek an approximate Stackelberg-Nash equilibrium, which serves as a bridge between the ``centralized" LQ games and the limiting state-average as the number of agents $N$ approaches infinity. Typically, the state-average is replaced by its frozen limit term for computational convenience.

As $N\rightarrow+\infty$, let us assume that the state-average $x^{(N)}(\cdot)$ can be approximated by some $\mathcal{F}_t^W$-adapted processes $z(\cdot)$ that will be subsequently determined through a CC system.
For the leader $\mathcal{A}_0$, we introduce the following auxiliary state $\bar{x}_0(\cdot)\in L^2_{\mathcal{F}_t^{W, W_0}}(0,T;\mathbb{R})$ which satisfies the following linear SDE
\begin{equation}\label{leader limiting state}
\left\{\begin{aligned}
d \bar{x}_0(t) =&\ \left[A_0(t) \bar{x}_0(t)+B_0(t) u_0(t)+E_0(t) z(t)+b_0(t)\right] d t \\
& +\left[C_0(t) \bar{x}_0(t)+D_0(t) u_0(t)+F_0(t) z(t)+\sigma_0(t)\right] d W_0(t) \\
& +\left[\widetilde{C}_0(t) \bar{x}_0(t)+\widetilde{D}_0(t) u_0(t)+\widetilde{F}_0(t) z(t)+\widetilde{\sigma}_0(t)\right] d W(t) , \\
 \bar{x}_0(0) =&\ \xi_0,
\end{aligned}\right.
\end{equation}
and the limiting cost functional
\begin{equation}\label{leader limiting cost}
J_0\left(u_0(\cdot)\right)=\frac{1}{2} \mathbb{E} \int_0^T\left[R_0(t) u_0(t)^2+Q_0(t)\left(\bar{x}_0(t)-z(t)\right)^2\right] dt + G_0 \bar{x}_0^2(T).
\end{equation}

For every follower $\mathcal{A}_i$, we introduce the following auxiliary state $\bar{x}_i(\cdot)\in L^2_{\mathcal{F}_t^{W, W_i}}(0,T;\mathbb{R})$ which satisfies the following linear SDE
\begin{equation}\label{follower limiting state}
\left\{\begin{aligned}
d \bar{x}_i(t) =&\ \left[A_1(t) \bar{x}_i(t)+B_1(t) u_i(t)+E_1(t) z(t)+b_1(t)\right] d t \\
& +\left[C_1(t) \bar{x}_i(t)+D_1(t) u_i(t)+F_1(t) z(t)+\sigma_1(t)\right] d W_i(t) \\
& +\left[\widetilde{C}_1(t) \bar{x}_i(t)+\widetilde{D}_1(t) u_i(t)+\widetilde{F}_1 (t)z(t)+\widetilde{\sigma}_1(t)\right] d W(t) , \\
 \bar{x}_i(0) =&\ \xi,
\end{aligned}\right.
\end{equation}
and the limiting cost functional
\begin{equation}\label{follower limiting cost}
J_i\left(u_i(\cdot)\right)=\frac{1}{2} \mathbb{E} \int_0^T\left[R_1(t) u_i^2(t)+Q_1(t)\left[\bar{x}_i(t)-\left(\lambda \bar{x}_0(t)+(1-\lambda) z(t)\right)\right]^2\right] dt + G_1 \bar{x}_i^2(T).
\end{equation}

Subsequently, we propose the corresponding limiting Stackelberg-Nash stochastic differential game with partial information.
\begin{prob}\label{problem decentralized}
Finding a strategy $u^*_0(\cdot)$ for leader $\mathcal{A}_0$ and strategy $u^*_i(\cdot)$ for each followers $\mathcal{A}_i$ satisfying the following conditions:

(i) for given $u_0(\cdot) \in \mathcal{U}_{0}^{l,d}$, the control stategy $u_i^*$ is a mapping $u_i^*:\mathcal{U}_{0}^{l,d}\rightarrow\mathcal{U}_i^{f,d}$ satisfying
\begin{equation*}
J_i\left(u^*_i[u_0](\cdot)\right)=\inf _{u_i(\cdot) \in\, \mathcal{U}_i^{f,d}} J_i\left(u_i(\cdot)\right);
\end{equation*}

(ii) the control strategy  $u^*_0(\cdot)$ of the leader $\mathcal{A}_0$ satisfies
\begin{equation*}
J_0\left(u^*_0(\cdot)\right)=\inf _{u_0(\cdot) \in\, \mathcal{U}_0^{l,d}} J_0\left(u_0(\cdot)\right).
\end{equation*}
\end{prob}
The above $\left(u_0^*(\cdot), u_1^*(\cdot), u_2^*(\cdot), \cdots\right)$ is called the {\it decentralized Stackelberg-Nash equilibrium} of Problem \ref{problem decentralized}.

\subsection{Open-loop decentralized strategies of the followers}

Observing that the frozen limiting state-average simplifies Problem \ref{problem centralized} into essentially an LQ Stackelberg-Nash stochastic differential game with partial information, we will utilize the stochastic maximum principle with partial information to derive its Stackelberg-Nash equilibria. In the remainder of this section, for brevity and without causing ambiguity, we omit the time dependency of some functions and stochastic processes, except for the terminal time.

By applying the stochastic maximum principle with partial information (for example, Theorem 3.1 of Baghery and \O ksendal \cite{Baghery-Oksendal-2007}, or Proposition 2.1 of Shi et al. \cite{Shi-Wang-Xiong-2016}), we can obtain the open-loop decentralized optimal strategies for the subproblem (i) of Problem \ref{problem decentralized}.

\newtheorem{thm}{Theorem}[section]
\begin{thm}\label{open loop thm1 of follower}
Let Assumptions \ref{A2.1} and \ref{A2.2} hold. For $i=1,2, \cdots$ and any given $u_0(\cdot) \in \mathcal{U}_0^{l,d}$, suppose $\bar{x}_0(\cdot)$ is the solution to (\ref{leader limiting state}), then the open-loop decentralized optimal strategy of $i$-th follower $\mathcal{A}_i$ is given by
\begin{equation}\label{follower open loop optimal control}
u_i^*=R_1^{-1}\left(B_1 \hat{p}_i+D_1 \hat{q}_i+\widetilde{D}_1 \hat{\widetilde{q}}_i\right),\ \mathbb{P}\text{-a.s.},\ \text{a.e.}\ t\in[0,T],
\end{equation}
where we denote $\hat{p}_i:=\mathbb{E}\left[\left.p_i\right\rvert \mathcal{F}_t^{W_i}\right]$, $\hat{q}_i:=\mathbb{E}\left[\left.q_i\right\rvert \mathcal{F}_t^{W_i}\right]$, $\hat{\widetilde{q}}_i:=\mathbb{E}\left[\left.\widetilde{q}_i\right\rvert \mathcal{F}_t^{W_i}\right]$ for notational simplicity and $\left(\bar{x}_i^*(\cdot), p_i(\cdot), q_i(\cdot),\widetilde{q}_i(\cdot), q_{i,0}(\cdot)\right)\in L^2_{\mathcal{F}_t^{W, W_i}}(0,T;\mathbb{R})\times L_{\mathcal{F}^i}^2\left(0, T ; \mathbb{R}\right) \times L_{\mathcal{F}^i}^2\left(0, T ; \mathbb{R}\right) \times L_{\mathcal{F}^i}^2\left(0, T ; \mathbb{R}\right)$ satisfies the following stochastic Hamiltonian system
\begin{equation}\label{follower Hamiltonian system}
\left\{\begin{aligned}
 d \bar{x}_i^*=& \left[A_1 \bar{x}_i^*+R_1^{-1} B_1^2 \hat{p}_i+R_1^{-1} B_1 D_1 \hat{q}_i+R_1^{-1} B_1 \widetilde{D}_1 \hat{\widetilde{q}}_i+E_1 z+b_1\right] d t \\
& +\left[C_1 \bar{x}_i^*+R_1^{-1} D_1 B_1 \hat{p}_i+R_1^{-1} D_1^2 \hat{q}_i+R_1^{-1} D_1 \widetilde{D}_1 \hat{\widetilde{q}}_i+F_1 z+\sigma_1\right] d W_i \\
& +\left[\widetilde{C}_1 \bar{x}_i^*+R_1^{-1} \widetilde{D}_1 B_1 \hat{p}_i+R_1^{-1} D_1 \widetilde{D}_1 \hat{q}_i+R_1^{-1} \widetilde{D}_1^2 \hat{\widetilde{q}}_i+\widetilde{F}_1 z+\widetilde{\sigma}_1\right] dW, \\
d p_i=&-\left[A_1 p_i+C_1 q_i+\widetilde{C}_1 \widetilde{q}_i-Q_1\left[\bar{x}_i^*-\left(\lambda \bar{x}_0+\left(1-\lambda\right) z\right)\right]\right] d t\\
& +q_i d W_i+\widetilde{q}_i d W+q_{i,0} d W_0 ,\\
 \bar{x}_i^*(0)=&\ \xi,\quad p_i(T)=-G_1\bar{x}^*_i(T),
\end{aligned}\right.
\end{equation}
with $z(\cdot)\in L^2_{\mathcal{F}_.^W}(0,T;\mathbb{R})$ to be determined.
\end{thm}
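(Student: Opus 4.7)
The plan is to recognize that, with $u_0(\cdot) \in \mathcal{U}_0^{l,d}$ and the frozen process $z(\cdot) \in L^2_{\mathcal{F}_t^W}(0,T;\mathbb{R})$ fixed, the $i$-th follower faces a standard LQ optimal control problem with partial information: the state $\bar{x}_i$ is driven by $W_i$ and $W$, the running and terminal costs are convex quadratics in $(\bar{x}_i, u_i)$, and the admissible $u_i$ must be $\mathcal{F}_t^{W_i}$-adapted. I would invoke the partial-information stochastic maximum principle of Baghery-\O ksendal \cite{Baghery-Oksendal-2007} (or Shi-Wang-Xiong \cite{Shi-Wang-Xiong-2016}), derive the adjoint BSDE on the enlarged filtration $\mathcal{F}_t^i = \sigma\{W, W_0, W_i\}$ (which carries $\bar{x}_0$ as well as $\bar{x}_i$), and then project the first-order condition onto $\mathcal{F}_t^{W_i}$ to obtain the formula \eqref{follower open loop optimal control}.

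First I would establish existence and uniqueness of the optimizer. Since $R_1 \gg 0$, $Q_1, G_1 \geq 0$, and $u_i \mapsto \bar{x}_i$ is affine, the functional $J_i$ is strictly convex and coercive on $\mathcal{U}_i^{f,d}$; hence a unique minimizer $u_i^*(\cdot)$ exists, and the first-order necessary condition supplied by the maximum principle is also sufficient. Next I would write the Hamiltonian
\begin{equation*}
\begin{aligned}
H_i(t,x,u,p,q,\widetilde{q}) =&\ p[A_1 x + B_1 u + E_1 z + b_1] + q[C_1 x + D_1 u + F_1 z + \sigma_1] \\
&\ + \widetilde{q}[\widetilde{C}_1 x + \widetilde{D}_1 u + \widetilde{F}_1 z + \widetilde{\sigma}_1] - \tfrac{1}{2}R_1 u^2 - \tfrac{1}{2}Q_1\bigl(x - \lambda \bar{x}_0 - (1-\lambda)z\bigr)^2.
\end{aligned}
\end{equation*}
Because the follower's state has no $dW_0$ component, $q_{i,0}$ does not enter $H_i$; nevertheless the martingale representation on $\mathcal{F}_t^i$ produces the three integrands $q_i, \widetilde{q}_i, q_{i,0}$ associated with $W_i, W, W_0$, so the adjoint BSDE takes exactly the form appearing in \eqref{follower Hamiltonian system} with terminal condition $p_i(T) = -G_1 \bar{x}_i^*(T)$.

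The optimality condition from the partial-information maximum principle is $\mathbb{E}\bigl[\partial_u H_i \bigm| \mathcal{F}_t^{W_i}\bigr] = 0$. Computing $\partial_u H_i = B_1 p_i + D_1 q_i + \widetilde{D}_1 \widetilde{q}_i - R_1 u_i$, and using that $u_i^*$ is already $\mathcal{F}_t^{W_i}$-measurable while $B_1, D_1, \widetilde{D}_1, R_1$ are deterministic, I can pull the conditional expectation through to obtain $B_1 \hat{p}_i + D_1 \hat{q}_i + \widetilde{D}_1 \hat{\widetilde{q}}_i - R_1 u_i^* = 0$. Inverting $R_1$ (permissible since $R_1 \gg 0$) yields \eqref{follower open loop optimal control}. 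Substituting this expression for $u_i^*$ back into \eqref{follower limiting state} gives the coupled forward-backward Hamiltonian system \eqref{follower Hamiltonian system}.

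The main obstacle is not the maximum principle itself but a careful bookkeeping of filtrations: $\bar{x}_i^*$ is $\mathcal{F}_t^{W,W_i}$-adapted, $\bar{x}_0$ is $\mathcal{F}_t^{W,W_0}$-adapted, so the adjoint equation must be written on the smallest filtration containing both, namely $\mathcal{F}_t^i$, and this is precisely what forces the extra $q_{i,0}\,dW_0$ term. Well-posedness of the resulting conditional-mean FBSDE in the stated spaces is nontrivial because the driver involves the $\mathcal{F}_t^{W_i}$-projections of the adjoint processes; I would defer the actual construction of a solution to the decoupling/Riccati analysis carried out later in the paper, presenting Theorem \ref{open loop thm1 of follower} as the formal necessary-and-sufficient characterization produced by the maximum principle.
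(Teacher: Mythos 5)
Your proposal is correct and follows essentially the same route as the paper: fix $u_0$ and the frozen process $z$, invoke the partial-information stochastic maximum principle of Baghery--\O ksendal/Shi--Wang--Xiong to get the stationarity condition $\mathbb{E}\bigl[B_1 p_i + D_1 q_i + \widetilde{D}_1\widetilde{q}_i - R_1 u_i^*\bigm|\mathcal{F}_t^{W_i}\bigr]=0$, and invert $R_1$ to obtain (\ref{follower open loop optimal control}), with strict convexity of $J_i$ giving sufficiency and uniqueness exactly as the paper notes after its proof. Your additional remarks on the filtration bookkeeping (the adjoint equation living on $\mathcal{F}_t^i$ and the resulting $q_{i,0}\,dW_0$ term) are consistent with, and slightly more explicit than, the paper's treatment.
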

\begin{proof}
For each $i=1,2, \cdots$, for the open-loop decentralized optimal strategy $u_i^*(\cdot)$, assuming $\bar{x}_i^*(\cdot)$ represents the corresponding state trajectory, and $\left(p_i(\cdot), q_i(\cdot),\widetilde{q}_i(\cdot), q_{i,0}(\cdot)\right)$ is the unique solution to the second equation in (\ref{follower Hamiltonian system}) relative to $\left(\bar{x}_i^*(\cdot), u_i^*(\cdot)\right)$, the stochastic maximum principle with partial information can be expressed in the following form
$$
\mathbb{E}\left[\left(B_1 p_i+D_1 q_i+\widetilde{D}_1 \widetilde{q}_i-R_1 u_i^* \right)\left(u_i-u_i^*\right)\rvert \mathcal{F}_t^{W_i}\right] =0 , \text{for any } u_i \in \mathbb{R},\ \mathbb{P}\text{-a.s.},\ \text{a.e.}\ t\in[0,T].
$$
Then we can derive that
$$
u_i^*=R^{-1}\left(B_1 \hat{p}_i+D_1 \hat{q}_i+\widetilde{D}_i \hat{\widetilde{q}}_i\right),\ \mathbb{P}\text{-a.s.},\ \text{a.e.}\ t\in[0,T].
$$
The proof is complete.
\end{proof}

\begin{remark}
	The adapted solution to the BSDE in (\ref{follower Hamiltonian system}) is adapted stochastic processes $(p_i(\cdot),\\ q_i(\cdot),\widetilde{q}_i(\cdot), q_{i,0}(\cdot))$, where $\left(q_i(\cdot),\widetilde{q}_i(\cdot), q_{i,0}(\cdot)\right)$ is to ensure the adaptability of the BSDE (See \cite{Ma-Yong-1999}).
\end{remark}

Moreover, since the cost functional (\ref{follower limiting cost}) of Problem \ref{problem decentralized} (i) is strictly convex, it allows a unique optimal control, so the sufficiency of the optimal control (\ref{follower open loop optimal control}) can also be obtained. In addition, the state feedback representation of (\ref{follower open loop optimal control}) can be obtained by Riccati equations.

Next, we will study the unknown frozen limiting state-average $z(\cdot)$, which is an $\mathcal{F}_t^W$-adapted process. When $N \rightarrow \infty$, we would like to approximate $x_i^*(\cdot)$ by $\bar{x}_i^*(\cdot)$, thus $\frac{1}{N} \sum_{i=1}^N x^*_i(\cdot)$ is approximated by $\frac{1}{N} \sum_{i=1}^N \bar{x}_i^*(\cdot)$. Since the $\mathcal{F}_t^{W, W_i}$-adapted process $\bar{x}_i^*(\cdot)$ and $\mathcal{F}_t^{W, W_j}$-adapted process $\bar{x}_j^*(\cdot)$ (for $i\neq j$ and $i,j=1,2,\cdots$) are identically distributed and conditionally independent given $\mathbb{E}\left[\cdot \mid \mathcal{F}^{W}\right]$, we can apply the conditional strong law of large numbers (Majerek et al. \cite{Majerek-Nowak-Zieba-2005}) to draw a conclusion
\begin{equation}\label{SLLN}
z(\cdot)=\lim _{N \rightarrow \infty} \frac{1}{N} \sum_{i=1}^N \bar{x}_i^*(\cdot)=\mathbb{E}\left[\bar{x}_i^*(\cdot) \mid \mathcal{F}_{.}^{W}\right].
\end{equation}

According to (\ref{follower limiting state}), by replacing $z(\cdot)$ by $\mathbb{E}\left[\bar{x}_i^*(\cdot) \mid \mathcal{F}_{.}^{W}\right]$, we can obtain for each $i=1,2, \cdots$,
\begin{equation}\label{follower explicit state}
\left\{\begin{aligned}
d \bar{x}_i^* =&\ \left[A_1 \bar{x}_i^*+B_1 u_i^*+E_1 \mathbb{E}\left[\bar{x}_i^* \mid \mathcal{F}_{t}^{W}\right]+b_1\right] d t \\
& +\left[C_1 \bar{x}_i^*+D_1 u_i^*+F_1 \mathbb{E}\left[\bar{x}_i^* \mid \mathcal{F}_{t}^{W}\right]+\sigma_1\right] d W_i \\
& +\left[\widetilde{C}_1 \bar{x}_i^*+\widetilde{D}_1 u_i^*+\widetilde{F}_1 \mathbb{E}\left[\bar{x}_i^* \mid \mathcal{F}_{t}^{W}\right]+\widetilde{\sigma}_1\right] d W , \\
 \bar{x}_i^*(0) =&\ \xi.
\end{aligned}\right.
\end{equation}
Taking expectation on both sides of the equation (\ref{follower explicit state}), we have

\begin{equation}\label{follower limiting E state}
\left\{\begin{aligned}
d \mathbb{E} \bar{x}_i^*=&\ \left[\left(A_1+E_1\right) \mathbb{E} \bar{x}_i^*+B_1 \mathbb{E} u_i^* +b_1\right] d t, \\
 \mathbb{E} \bar{x}_i^*(0)=&\ \xi,\quad\mbox{for }i=1, \cdots, N.
\end{aligned}\right.
\end{equation}
Since $u_i^*(\cdot)$ is $\mathcal{F}_t^{W_i}$-adapted, we can obtain $\mathbb{E}\left[u_i^* \mid \mathcal{F}_t^{W}\right]=\mathbb{E}\left[u_i^*\right]$, for  $i=1,2, \cdots$. Then taking $\mathbb{E}\left[\cdot \mid \mathcal{F}_{.}^{W}\right]$ on both sides of (\ref{follower explicit state}), we get
\begin{equation}\label{z state}
\left\{\begin{aligned}
d z=&\ \left[\left(A_1+E_1\right) z+B_1 \mathbb{E} u_i^*+b_1\right] d t+\left[\left(\widetilde{C}_1+\widetilde{F}_1\right) z+\widetilde{D}_1 \mathbb{E} u_i^*+\widetilde{\sigma}_1\right] d W, \\
z(0)=&\ \xi,
\end{aligned}\right.
\end{equation}
which admits a unique solution $z(\cdot)\in L^2_{\mathcal{F}_t^W}(0,T;\mathbb{R})$. This indicates that
\begin{equation}\label{Ez state}
\left\{\begin{aligned}
d \mathbb{E}z=&\ \left[\left(A_1+E_1\right) \mathbb{E}z+B_1 \mathbb{E} u_i^*+b_1\right] d t ,\\
\mathbb{E}z(0)=&\ \xi.
\end{aligned}\right.
\end{equation}
That is to say, $\mathbb{E}\bar{x}_i^*(t)\equiv\mathbb{E}z(t),t\in[0,T]$, for  $i=1,2, \cdots$.

By noticing the second equation of (\ref{follower Hamiltonian system}) is coupled with the state equation (\ref{leader limiting state}) of leader $\mathcal{A}_0$ and replacing $z(\cdot)$ by $\mathbb{E}\left(\bar{x}^*_i(\cdot)\mid\mathcal{F}^{W}_\cdot\right)$, then we derive the following CC system, which is a {\it conditional mean field FBSDE} (CMF-FBSDE) of $\left(\bar{x}_i^*(\cdot),\bar{x}_0(\cdot), p_i(\cdot), q_i(\cdot),\widetilde{q}_i(\cdot), q_{i,0}(\cdot)\right)\in L^2_{\mathcal{F}_t^{W, W_i}}(0,T;\mathbb{R})\times L^2_{\mathcal{F}_t^{W, W_0}}(0,T;\mathbb{R})\times L_{\mathcal{F}^i}^2\left(0, T ; \mathbb{R}\right) \times L_{\mathcal{F}^i}^2\left(0, T ; \mathbb{R}\right) \times L_{\mathcal{F}^i}^2\left(0, T ; \mathbb{R}\right)$ (\cite{Shi-Wang-Xiong-2016}):
\begin{equation}\label{CC system}
\left\{\begin{aligned}
 d \bar{x}_i^*=&\ \left\{A_1 \bar{x}_i^*+R_1^{-1} B_1^2 \hat{p}_i+R_1^{-1} B_1 D_1 \hat{q}_i+R_1^{-1} B_1 \widetilde{D}_1 \hat{\widetilde{q}}_i+E_1 \mathbb{E}\left[\bar{x}_i^* \mid \mathcal{F}_t^{W}\right]
+b_1\right\} d t \\
& +\left[C_1 \bar{x}_i^*+R_1^{-1} D_1 B_1 \hat{p}_i+R_1^{-1} D_1^2 \hat{q}_i+R_1^{-1} D_1 \widetilde{D}_1 \hat{\widetilde{q}}_i+F_1 \mathbb{E}\left[\bar{x}_i^* \mid \mathcal{F}_t^{W}\right]
+\sigma_1\right] d W_i \\
& +\left[\widetilde{C}_1 \bar{x}_i^*+R_1^{-1} \widetilde{D}_1 B_1 \hat{p}_i+R_1^{-1} D_1 \widetilde{D}_1 \hat{q}_i+R_1^{-1} \widetilde{D}_1^2 \hat{\widetilde{q}}_i+\widetilde{F}_1 \mathbb{E}\left[\bar{x}_i^* \mid \mathcal{F}_t^{W}\right]
+\widetilde{\sigma}_1\right] dW, \\
d \bar{x}_0 =&\ \left[A_0 \bar{x}_0+B_0 u_0+E_0 \mathbb{E}\left[\bar{x}_i^* \mid \mathcal{F}_t^{W}\right]+b_0\right] d t \\
& +\left[C_0 \bar{x}_0+D_0 u_0+F_0 \mathbb{E}\left[\bar{x}_i^* \mid \mathcal{F}_t^{W}\right]+\sigma_0\right] d W_0 \\
& +\left[\widetilde{C}_0 \bar{x}_0+\widetilde{D}_0 u_0+\widetilde{F}_0 \mathbb{E}\left[\bar{x}_i^* \mid \mathcal{F}_t^{W}\right]+\widetilde{\sigma}_0\right] d W , \\
d p_i=&-\left[A_1 p_i+C_1 q_i+\widetilde{C}_1 \widetilde{q}_i-Q_1\left[\bar{x}_i^*-\left(\lambda \bar{x}_0+\left(1-\lambda\right) \mathbb{E}\left[\bar{x}_i^* \mid \mathcal{F}_t^{W}\right]
\right)\right]\right] d t\\
&+q_i d W_i+\widetilde{q}_i d W+q_{i,0} d W_0 ,\\
 \bar{x}_i^*(0)=&\ \xi,\quad\bar{x}_0(0) =\ \xi_0,\quad p_i(T)=-G_1\bar{x}_i^*(T),\quad\mbox{for }i=1, \cdots, N.
\end{aligned}\right.
\end{equation}
By noting that (\ref{CC system}) possesses a fully coupled structure and contains conditional expectation terms, we will discuss its well-posedness in the following subsection.

\subsection{State feedback decentralized strategies of the followers}

In this section, we derive the state feedback representation of the decentralized optimal strategies (\ref{follower open loop optimal control}) of the followers, through Riccati equations.

Noting the terminal condition and structure of (\ref{CC system}), for each $i=1,2, \cdots$, we suppose
\begin{equation}\label{follower decouple form}
p_i(t)=-P_1(t) \bar{x}_i^*(t)-P_2(t) \mathbb{E}\left[\bar{x}_i^*(t)\right]-\varphi_i(t),\quad t\in[0,T],
\end{equation}
with $P_1(T)=G_1$, $P_2(T)=0$ for two deterministic differentiable functions $P_1(\cdot),P_2(\cdot)$, and with $\varphi_i(T)=0$, for an $\mathcal{F}^i_t$-adapted process triple $(\varphi_i(\cdot),\xi_{i,0}(\cdot),\xi_i(\cdot))$ satisfying a BSDE
$$
d \varphi_i{(t)}=\alpha_i(t) d t+\xi_{i,0} d W_0+\xi_i d W_i,
$$
where the $\mathcal{F}^i_t$-adapted process $\alpha_i(\cdot)$ is to be determined. Applying It\^o's formula to (\ref{follower decouple form}) and
comparing the coefficients of the diffusion terms, we achieve
\begin{equation}\label{32}
\left\{\begin{aligned}
q_i=&-P_1\left[C_1 \bar{x}_i^*+ D_1 u^*_i +F_1 \mathbb{E}\left[\bar{x}_i^* \mid \mathcal{F}_t^{W}\right]+\sigma_1\right]-\xi_i,\\
\widetilde{q}_i=&-P_1\left[\widetilde{C}_1 \bar{x}_i^*+\widetilde{D}_1 u^*_i +\widetilde{F}_1 \mathbb{E}\left[\bar{x}_i^* \mid \mathcal{F}_t^{W}\right]+\widetilde{\sigma}_1\right],\\
q_{i,0}=&-\xi_{i,0},\,\quad \mbox{for }i=1,2, \cdots.
\end{aligned}\right.
\end{equation}
Then, taking the conditional expectation $\mathbb{E}[\cdot \mid \mathcal{F}_t^{W_i}]$, we can obtain
\begin{equation}\label{33}
\left\{\begin{aligned}
 \hat{p}_i=&-P_1 \hat{\bar{x}}_i^*-P_2 \mathbb{E}\bar{x}_i^*-\hat{ \varphi}_i ,\\
 \hat{q}_i=&-P_1\left[C_1 \hat{\bar{x}}_i^*+D_1 u^*_i +F_1 \mathbb{E}\bar{x}_i^*+\sigma_1\right]-\hat{\xi}_i,\\
 \hat{\widetilde{q}}_i=&-P_1\left[\widetilde{C}_1 \hat{\bar{x}}_i^*+\widetilde{D}_1 u^*_i+\widetilde{F}_1 \mathbb{E}\bar{x}_i^*+\widetilde{\sigma}_1\right], \\
 \hat{q}_{i,0}=&- \hat{\xi}_{i,0},\quad\mbox{for }i=1,2, \cdots.
\end{aligned}\right.
\end{equation}
Substituting them into (\ref{follower open loop optimal control}), we have
\begin{equation}\label{follower feedback strategy}
\begin{aligned}
u_i^* = &-\mathcal{R}_1^{-1}\left[ \left(B_1 + D_1 C_1 + \widetilde{D}_1 \widetilde{C}_1\right) P_1 \hat{\bar{x}}_i^* + \left(B_1 P_2 + D_1 F_1 P_1 + \widetilde{D}_1 \widetilde{F}_1 P_1\right) \mathbb{E}\bar{x}_i^* \right. \\
& \left. + B_1 \hat{\varphi}_i + D_1 P_1 \sigma_1 + \widetilde{D}_1 P_1 \widetilde{\sigma}_1 -D_1\hat{\xi}_i\right],\ \mathbb{P}\text{-a.s.},\ \text{a.e.}\ t\in[0,T],\quad\mbox{for }i=1,2, \cdots.
\end{aligned}
\end{equation}
where $\mathcal{R}_1:=R_1 + D_1^2 P_1  + \widetilde{D}_1^2 P_1 $, and
\begin{equation}\label{follower feedback E strategy}
\begin{aligned}
\mathbb{E} u_i^*=&-\mathcal{R}_1^{-1}\left\{\left[\left(B_1 + D_1 C_1 + \widetilde{D}_1 \widetilde{C}_1\right) P_1+\left(B_1 P_2+D_1 F_1 P_1+\widetilde{D}_1 \widetilde{F}_1 P_1\right)\right]\mathbb{E}\bar{x}_i^*\right. \\
&\left.+B_1 \mathbb{E} \varphi_i+\left(D_1 \sigma_1+\widetilde{D}_1 \widetilde{\sigma}_1\right) P_1-D_1 \mathbb{E}\xi_i\right\},\quad\mbox{for }i=1,2, \cdots.
\end{aligned}
\end{equation}
By compare the coefficients of the drift terms, we get for $i=1,2, \cdots$,
$$
\begin{aligned}
-\alpha_i = & \left(\dot{P}_1 + (2 A_1 + C_1^2 + \widetilde{C}_1^2) P_1 + Q_1\right) \bar{x}_i^* + \left(\dot{P}_2 + \left(2 A_1 + E_1\right) P_2\right) \mathbb{E}\bar{x}_i^* \\
& + \left(P_1(E_1 + C_1 F_1 + \widetilde{C}_1 \widetilde{F}_1) - Q_1(1 - \lambda)\right) \mathbb{E}\left[\bar{x}_i^* \mid \mathcal{F}_t^{W}\right]  - \left(B_1 + C_1 D_1 + \widetilde{C}_1 \widetilde{D}_1\right) P_1 \mathcal{R}_1^{-1} \\
&  \left[\left(B_1 + D_1 C_1 + \widetilde{D}_1 \widetilde{C}_1\right) P_1 \hat{\bar{x}}_i^* + \left(B_1 P_2 + D_1 F_1 P_1 + \widetilde{D}_1 \widetilde{F}_1 P_1\right) \mathbb{E}\bar{x}_i^* \right. \\
& \left. + B_1 \hat{\varphi}_i + D_1 P_1 \sigma_1 + \widetilde{D}_1 P_1 \widetilde{\sigma}_1-D_1\hat{\xi}_i\right]  - P_2 B_1 \mathcal{R}_1^{-1} \left\{\left[\left(B_1 + D_1 C_1 + \widetilde{D}_1 \widetilde{C}_1\right) P_1  \right. \right.\\
& \left.\left. + B_1 P_2 + D_1 F_1 P_1 + \widetilde{D}_1 \widetilde{F}_1 P_1\right] \mathbb{E}\bar{x}_i^* + B_1 \mathbb{E}\varphi_i + \left(D_1 \sigma_1 + \widetilde{D}_1 \widetilde{\sigma}_1\right) P_1-D_1\mathbb{E}\xi_i\right\} \\
& + P_1 b_1 + P_2 b_1 + A_1 \varphi_i + C_1 P_1 \sigma_1 +C_1\xi_i+ \widetilde{C}_1 P_1 \widetilde{\sigma}_1 - Q_1 \lambda \bar{x}_0,
\end{aligned}
$$
Noting $\bar{x}_0(\cdot)$ is $\mathcal{F}^{W,W_0}_t$-adapted, we have $\mathbb{E}\left[\bar{x}_0 \mid \mathcal{F}_t^{W_i}\right]=\mathbb{E} \bar{x}_0 $. Then take the conditional expectation $\mathbb{E}\left[\cdot \mid \mathcal{F}^{W_i}_{\cdot}\right]$ on both sides of above equation, we can obtain
$$
\begin{aligned}
 -\hat{\alpha}_i=&\left[\dot{P}_1+\left(2 A_1+C_1^2+\widetilde{C}_1^2\right) P_1-\mathcal{R}_1^{-1}\left(B_1+C_1 D_1+\widetilde{C}_1 \widetilde{D}_1\right)^2 P_1^2 +Q_1\right] \hat{\bar{x}}_i^* \\
& +\left\{\dot{P}_2+\left(2 A_1+E_1\right) P_2+P_1\left(E_1+C_1 F_1+\widetilde{C}_1 \widetilde{F}_1\right)-Q_1(1-\lambda)\right.\\
&\quad-\left(B_1+C_1 D_1+\widetilde{C}_1 \widetilde{D}_1\right)P_1 \mathcal{R}_1^{-1}\left(B_1 P_2+D_1 F_1 P_1+\widetilde{D}_1 \widetilde{F}_1 P_1\right) \\
&\quad\left.-P_2 B_1 \mathcal{R}_1^{-1}\left[\left(B_1+D_1 C_1+\widetilde{D}_1 \widetilde{C}_1\right) P_1+B_1 P_2+D_1 F_1 P_1+\widetilde{D}_1 \widetilde{F}_1 P_1\right]\right\}\mathbb{E}\bar{x}_i^*\\
& -\left(B_1+C_1 D_1+\widetilde{C}_1 \widetilde{D}_1\right) P_1 \mathcal{R}_1^{-1}\left(B_1  \hat{\varphi}_i+D_1 P_1 \sigma_1+\widetilde{D}_1 P_1 \widetilde{\sigma}_1-D_1\hat{\xi}_i\right) \\
& -P_2 B_1 \mathcal{R}_1^{-1}\left[B_1 \mathbb{E} \varphi_i+\left(D_1 \sigma_1+\widetilde{D}_1 \widetilde{\sigma}_1\right) P_1-D_1\mathbb{E}\xi_i\right] \\
& +P_1 b_1+P_2 b_1+A_1 \hat{\varphi}_i+C_1 P_1 \sigma_1+C_1\xi_i+\widetilde{C}_1 P_1 \widetilde{\sigma}_1-Q_1 \lambda \mathbb{E}\bar{x}_0,\quad\mbox{for }i=1,2, \cdots.
\end{aligned}
$$

Thus, we introduce the Riccati equations as follows:
\begin{equation}\label{RE P1}
\left\{\begin{array}{l}
\dot{P}_1+\left(2 A_1+C_1^2+\widetilde{C}_1^2\right) P_1-\mathcal{R}_1^{-1}\left(B_1+C_1 D_1+\widetilde{C}_1 \widetilde{D}_1\right)^2 P_1^2+Q_1=0, \\
P_1(T)=G_1,
\end{array}\right.
\end{equation}
\begin{equation}\label{RE P2}
\left\{\begin{aligned}
& \dot{P}_2+P_2\left[2 A_1+E_1-2\left(B_1+C_1 D_1+\widetilde{C}_1 \widetilde{D}_1\right) P_1 \mathcal{R}_1^{-1} B_1-\left(D_1 F_1+\widetilde{D}_1 \widetilde{F}_1\right) B_1 P_1\right] \\
& -B_1^2 \mathcal{R}_1^{-1} P_2^2 +P_1\left(E_1+C_1 F_1+\widetilde{C}_1 \widetilde{F}_1\right)\\
& -\left(B_1+C_1 D_1+\widetilde{C}_1 \widetilde{D}_1\right) P_1^2 \mathcal{R}_1^{-1}\left(D_1 F_1+\widetilde{D}_1 \widetilde{F}_1\right)-Q_1(1-\lambda)=0, \\
& P_2(T)=0,
\end{aligned}\right.
\end{equation}
and the equation of $(\hat{\varphi}_i(\cdot),\hat{\xi}_i(\cdot))\in L^2_{\mathcal{F}_t^{W_i}}(0,T;\mathbb{R})\times L^2_{\mathcal{F}_t^{W_i}}(0,T;\mathbb{R})$ as
\begin{equation}\label{equation of hat varphi}
\left\{\begin{aligned}
d \hat{\varphi}_i= & \left\{\left(B_1+C_1 D_1+\widetilde{C}_1 \widetilde{D}_1\right) P_1 \mathcal{R}_1^{-1}\left(B_1 \hat{\varphi}_i+D_1 P_1 \sigma_1+\widetilde{D}_1 P_1 \widetilde{\sigma}_1\right)\right. \\
& +P_2 B_1 \mathcal{R}_1^{-1}\left[B_1 \mathbb{E}\varphi_i+\left(D_1 \sigma_1+\widetilde{D}_1 \widetilde{\sigma}_1 \right)P_1-D_1 \mathbb{E}\xi_i \right]\\
&\left.-\left[P_1 b_1+P_2 b_1+A_1 \hat{\varphi}_i+C_1 P_1 \sigma_1 +C_1\hat{\xi}_i +\widetilde{C}_1 P_1 \widetilde{\sigma}_1-Q_1 \lambda \mathbb{E}\bar{x}_0\right]\right\} d t+\hat{\xi}_i dW_i,\\
 \hat{\varphi}_i(T)=&\ 0,\quad\mbox{for }i=1,2, \cdots.
\end{aligned}\right.
\end{equation}
By the existence and uniqueness of the solution to mean-field type BSDEs (Li et al. \cite{Li-Sun-Xiong-2019}), in fact, (\ref{equation of hat varphi}) degenerates to the following linear {\it backward ODE} (BODE):
\begin{equation}\label{equation of E varphi}
\left\{\begin{aligned}
d \mathbb{E} \varphi_i=&\left(M_1 \mathbb{E} \varphi_i+Q_1\lambda \mathbb{E} \bar{x}_0+M_2\right) d t, \\
\mathbb{E} \varphi_i(T)=&\ 0,\quad\mbox{for }i=1,2, \cdots,
\end{aligned}\right.
\end{equation}
which is coupled with the linear ODE of $E\bar{x}_0$, where we have denoted
$$
\left\{\begin{aligned}
M_1:=&\left(B_1+C_1 D_1+\widetilde{C}_1 \widetilde{D}_1\right) P_1 \mathcal{R}_1^{-1} B_1-A_1 +P_2 B_1 \mathcal{R}_1^{-1} B_1,\\
M_2:=&\left(B_1+C_1 D_1+\widetilde{C}_1 \widetilde{D}_1\right) P_1 \mathcal{R}_1^{-1}\left(D_1 P_1 \sigma_1+\widetilde{D}_1 P_1 \widetilde{\sigma}_1\right)+P_2 B_1 \mathcal{R}_1^{-1}\left(D_1 \sigma_1+\widetilde{D}_1 \widetilde{\sigma}_1\right) P_1\\
&-P_1 b_1-P_2 b_1-C_1 P_1 \sigma_1-\widetilde{C}_1 P_1 \widetilde{\sigma}_1.
\end{aligned}\right.
$$
Noting (\ref{equation of E varphi}) is independent of $i$, we could drop the subscript $i$ in the rest part of this paper, that is, $\varphi_i(t)\equiv\varphi(t),t\in[0,T]$, for all $i=1,2, \cdots$. Therefore, we can get the following {\it forward-backward ODEs} (FBODEs):
\begin{equation}\label{FBODE}
\left\{\begin{aligned}
d \mathbb{E}z=&\left(N_1 \mathbb{E}z -B_1 \mathcal{R}_1^{-1}B_1 \mathbb{E} \varphi+ N_2\right)  d t, \\
d \mathbb{E}\bar{x}_0 =&\left(A_0 \mathbb{E}\bar{x}_0+B_0 \mathbb{E}u_0+E_0 \mathbb{E}z +b_0\right) d t, \\
d \mathbb{E} \varphi=&\left(M_1 \mathbb{E} \varphi+Q_1\lambda \mathbb{E} \bar{x}_0+M_2\right) d t, \\
 \mathbb{E} z(0)=&\ \xi,\ \mathbb{E}\bar{x}_0(0) = \xi_0,\ \mathbb{E} \varphi(T)=0,
\end{aligned}\right.
\end{equation}
where
$$
\left\{\begin{aligned}
N_1:=&\ A_1+E_1-B_1 \mathcal{R}_1^{-1}\left[\left(B_1 + D_1 C_1 + \widetilde{D}_1 \widetilde{C}_1\right) P_1+B_1 P_2+D_1 F_1 P_1+\widetilde{D}_1 \widetilde{F}_1 P_1\right],\\
N_2:=&-B_1 \mathcal{R}_1^{-1}\left(D_1 \sigma_1+\widetilde{D}_1 \widetilde{\sigma}_1\right) P_1 +b_1.
\end{aligned}\right.
$$

Now, we derive the solvability of the equations (\ref{RE P1}), (\ref{RE P2}) and (\ref{FBODE}). Firstly, we introduce
\begin{assumption}\label{A3.1}
$$
\begin{aligned}
P_1\left(E_1+C_1 F_1+\widetilde{C}_1 \widetilde{F}_1\right)-\left(B_1+C_1 D_1+\widetilde{C}_1 \widetilde{D}_1\right) P_1^2 \mathcal{R}_1^{-1}\left(D_1 F_1+\widetilde{D}_1 \widetilde{F}_1\right)-Q_1(1-\lambda)\geq 0.
\end{aligned}
$$
\end{assumption}

Under assumptions \ref{A2.1} and \ref{A2.2}, (\ref{RE P1}) is standard Riccati equation. Noticing that (\ref{RE P2}) is a standard Riccati equation under assumption \ref{A3.1}. Therefore, applying the standard results of Chapter 6 in Yong and Zhou \cite{Yong-Zhou-1999}, both (\ref{RE P1}) and (\ref{RE P2}) exist unique solutions.

Then, we derive the condition under which (\ref{FBODE}) is solvable. To this end, we first rewrite (\ref{FBODE}) in the following form:
\begin{equation}\label{dimensional expansion of FBODE}
\left\{\begin{aligned}
&d\left(\begin{array}{c}
\mathbb{E}z \\
\mathbb{E}\bar{x}_0  \\
\mathbb{E}\varphi
\end{array}\right)=\Pi\left(\begin{array}{c}
\mathbb{E}z \\
\mathbb{E}\bar{x}_0  \\
\mathbb{E}\varphi
\end{array}\right)+\Delta, \\
&\mathbb{E}z(0)=\xi, \quad \mathbb{E}\bar{x}_0 (0)=\xi_0, \quad \mathbb{E}\varphi (T)=0,
\end{aligned}\right.
\end{equation}
where
$$
\Pi:=\left(\begin{array}{ccc}
N_1 & 0 & -B_1 \mathcal{R}^{-1} B_1 \\
E_0 & A_0 & 0 \\
0 & Q_1\lambda & M_1
\end{array}\right), \quad \Delta:=\left(\begin{array}{c}
N_2 \\
B_0\mathbb{E}u_0+b_0 \\
M_2
\end{array}\right) .
$$

Then, by the variation of constant formula, we have
$$
\left(\begin{array}{c}
\mathbb{E}z \\
\mathbb{E}\bar{x}_0  \\
\mathbb{E}\varphi
\end{array}\right)=\Theta(t)\left(\begin{array}{c}
\xi \\
\xi_0 \\
\mu
\end{array}\right)+\Theta(t) \int_0^t \Theta^{-1}(s) \Delta d s
$$
where $\Theta(\cdot)$ is the fundamental solution matrix of the ODE: $\dot{\Psi}=\Pi \Psi
$ with the initial conditions $\Psi(0)=I_3$ and $\mathbb{E}\varphi$ need to satisfy the  initial conditions $\mathbb{E}\varphi(0)=\mu$. Noting the terminal condition in (\ref{dimensional expansion of FBODE}), now we present the following result.

\begin{assumption}\label{Assumption FBODE}
	$$
	det\left[\left(\begin{array}{ccc}
		0 & 0 & 1 \\
	\end{array}\right) \Theta(T)\left(\begin{array}{l}
		0 \\
		0  \\
		1
	\end{array}\right)\right]\neq 0,
	$$
\end{assumption}
\begin{mypro}
For given $T>0$, let Assumption \ref{Assumption FBODE} holds, then (\ref{FBODE}) has a unique solution on $[0, T]$, for any initial value $\xi$ and $\xi_0$.
\end{mypro}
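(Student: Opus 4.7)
The plan is to recognize (\ref{FBODE}) as a linear two-point boundary value problem in $\mathbb{R}^3$ (with initial data prescribed in the first two components and terminal data prescribed in the third), and to solve it by a shooting argument: introduce the auxiliary parameter $\mu := \mathbb{E}\varphi(0)$, propagate forward via the variation-of-constants formula already displayed above the statement, and then use the terminal condition $\mathbb{E}\varphi(T)=0$ to obtain a single scalar equation for $\mu$. Assumption \ref{Assumption FBODE} is exactly the non-degeneracy needed to invert that scalar equation.

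More concretely, I would first note that under Assumptions \ref{A2.1}, \ref{A2.2}, \ref{A3.1} the matrix $\Pi$ and the inhomogeneity $\Delta$ are bounded on $[0,T]$ (since $P_1,P_2$ are bounded solutions of (\ref{RE P1})--(\ref{RE P2}), and $\mathbb{E}u_0$ is admissible), so the fundamental matrix $\Theta(\cdot)$ of $\dot{\Psi} = \Pi\Psi$, $\Psi(0)=I_3$, exists, is continuous, and is invertible on all of $[0,T]$ (by Liouville's formula the determinant never vanishes). Hence, for \emph{every} choice of $\mu\in\mathbb{R}$, the variation-of-constants formula
\[
\begin{pmatrix}\mathbb{E}z(t)\\ \mathbb{E}\bar{x}_0(t)\\ \mathbb{E}\varphi(t)\end{pmatrix}
=\Theta(t)\begin{pmatrix}\xi\\ \xi_0\\ \mu\end{pmatrix}+\Theta(t)\int_0^t\Theta^{-1}(s)\Delta(s)\,ds
\]
defines a $C^1$ triple that satisfies the forward ODE of (\ref{dimensional expansion of FBODE}) together with $\mathbb{E}z(0)=\xi$ and $\mathbb{E}\bar x_0(0)=\xi_0$.

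Next, I would impose the remaining terminal condition. Extracting the third component at $t=T$ by left-multiplying with the row $(0,0,1)$ and setting the result to zero yields the scalar equation
\[
\bigl[(0,0,1)\Theta(T)(0,0,1)^\top\bigr]\mu
=-(0,0,1)\Theta(T)(\xi,\xi_0,0)^\top
-(0,0,1)\Theta(T)\int_0^T\Theta^{-1}(s)\Delta(s)\,ds.
\]
Assumption \ref{Assumption FBODE} asserts precisely that the coefficient of $\mu$ is nonzero, so this equation has a unique solution $\mu^*\in\mathbb{R}$. Substituting $\mu^*$ back into the formula above produces a triple that solves (\ref{FBODE}), establishing existence.

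For uniqueness, I would observe that the difference of any two solutions satisfies the homogeneous version of (\ref{dimensional expansion of FBODE}) ($\Delta\equiv 0$) with homogeneous boundary data ($\xi=\xi_0=0$, $\mathbb{E}\varphi(T)=0$). The same scalar equation then forces the associated $\mu$ to equal zero, and the variation-of-constants formula gives the zero triple. The main subtlety—really the only one—is that all ingredients of the scalar equation be well-defined, which reduces to invertibility of $\Theta(s)$ on $[0,T]$ (automatic) and non-vanishing of the (3,3)-block $(0,0,1)\Theta(T)(0,0,1)^\top$ (exactly Assumption \ref{Assumption FBODE}). No contraction or small-time argument is needed because the system is linear with bounded coefficients, so the whole proof is essentially a one-dimensional linear-algebra calculation performed on top of the already-available fundamental matrix.
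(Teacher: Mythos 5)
Your proposal is correct and follows essentially the same route the paper takes: the paper sets up exactly this shooting argument via the variation-of-constants formula with the free parameter $\mu=\mathbb{E}\varphi(0)$, and Assumption \ref{Assumption FBODE} is precisely the invertibility of the resulting scalar equation at $t=T$. You have simply written out the existence and uniqueness details that the paper leaves implicit, and both steps are sound.
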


\begin{remark}
Similar results about the solvability of coupled FBODE can be referred to the Chapter 2, Section 3 of Ma and Yong \cite{Ma-Yong-1999} and Section 4 of Huang and Huang \cite{Huang-Huang-2017}.
\end{remark}

Then, the state feedback representation of the decentralized optimal strategies of the followers can be obtained in the following theorem.
\begin{thm}\label{theorem feedback of follower}
Let Assumptions \ref{A2.1}, \ref{A2.2}, \ref{A3.1} and \ref{Assumption FBODE} hold, for given $u_0(\cdot) \in \mathcal{U}_{0}^{l,d}$, the state feedback optimal strategies of the followers $\mathcal{A}_i,i=1,2,\cdots$, can be represented as
\begin{equation}\label{follower new feedback strategy}
\begin{aligned}
u_i^* = &-\mathcal{R}_1^{-1}\left[ \left(B_1 + D_1 C_1 + \widetilde{D}_1 \widetilde{C}_1\right) P_1 \hat{\bar{x}}_i^* + \left(B_1 P_2 + D_1 F_1 P_1 + \widetilde{D}_1 \widetilde{F}_1 P_1\right) \mathbb{E}z \right. \\
& \left. + B_1 \mathbb{E} \varphi + D_1 P_1 \sigma_1 + \widetilde{D}_1 P_1 \widetilde{\sigma}_1 \right],\ \mathbb{P}\text{-a.s.},\ \text{a.e.}\ t\in[0,T],
\end{aligned}
\end{equation}
where $\hat{\bar{x}}_i^*(\cdot)\in L^2_{\mathcal{F}_t^{W_i}}(0,T;\mathbb{R})$ satisfies
\begin{equation}\label{hat bar x_i}
\left\{\begin{aligned}
d \hat{\bar{x}}_i^* =& \left\{\left[ A_1-B_1\mathcal{R}_1^{-1}\left(B_1 + D_1 C_1 + \widetilde{D}_1 \widetilde{C}_1\right) P_1\right] \hat{\bar{x}}_i^*\right.  \\
&\ +\left[E_1-B_1\mathcal{R}_1^{-1} \left(B_1 P_2 + D_1 F_1 P_1 + \widetilde{D}_1 \widetilde{F}_1 P_1\right)\right] \mathbb{E}z-B_1\mathcal{R}_1^{-1}B_1 \mathbb{E} \varphi\\
&\ \left. -B_1\mathcal{R}_1^{-1}\left( D_1 P_1 \sigma_1 + \widetilde{D}_1 P_1 \widetilde{\sigma}_1\right) +b_1\right\} d t \\
& +\left\{\left[ C_1-D_1\mathcal{R}_1^{-1}\left(B_1 + D_1 C_1 + \widetilde{D}_1 \widetilde{C}_1\right) P_1\right] \hat{\bar{x}}_i^*\right.  \\
&\ +\left[F_1-D_1\mathcal{R}_1^{-1} \left(B_1 P_2 + D_1 F_1 P_1 + \widetilde{D}_1 \widetilde{F}_1 P_1\right)\right] \mathbb{E}z-D_1\mathcal{R}_1^{-1}B_1 \mathbb{E} \varphi\\
&\ \left. -D_1\mathcal{R}_1^{-1}\left( D_1 P_1 \sigma_1 + \widetilde{D}_1 P_1 \widetilde{\sigma}_1\right) +\sigma_1\right\}d W_i, \\
 \hat{\bar{x}}_i^* (0) =&\ \xi,
\end{aligned}\right.
\end{equation}
and $\mathbb{R}$-valued function triple $(\mathbb{E}z(\cdot),\mathbb{E}\bar{x}_0(\cdot),\mathbb{E} \varphi)$ satisfies (\ref{FBODE}).
\end{thm}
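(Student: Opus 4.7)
The preceding development has in fact carried out essentially the full derivation, so my plan is to organize the pieces already assembled and to fill in the two calculations that have been deferred — the reduction of $(\hat{\varphi}_i,\hat{\xi}_i)$ to $(\mathbb{E}\varphi,0)$, and the filtering step that produces (\ref{hat bar x_i}). First, I would obtain the feedback formula (\ref{follower new feedback strategy}) from (\ref{follower feedback strategy}) by two substitutions: replace $\mathbb{E}\bar{x}_i^*$ by $\mathbb{E}z$, which is justified because (\ref{follower limiting E state}) and (\ref{Ez state}) are the same linear ODE with the same initial condition; and replace $\hat{\varphi}_i$ by $\mathbb{E}\varphi$ while simultaneously eliminating the $D_1\hat{\xi}_i$ term. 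For the latter, the coupled system formed by (\ref{equation of E varphi}) (the expectation of (\ref{equation of hat varphi})) together with the forward ODEs for $\mathbb{E}z$ and $\mathbb{E}\bar{x}_0$ is precisely the FBODE (\ref{FBODE}), whose unique solvability is guaranteed by Assumption \ref{Assumption FBODE}. Since $\mathbb{E}\varphi$ is then a deterministic function, the BSDE (\ref{equation of hat varphi}) becomes a linear BSDE in $(\hat{\varphi}_i,\hat{\xi}_i)$ driven only by $W_i$ with a deterministic driver; uniqueness forces $(\hat{\varphi}_i,\hat{\xi}_i)=(\mathbb{E}\varphi,0)$, and the subscript $i$ may be dropped.

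Second, to derive the filter SDE (\ref{hat bar x_i}), I would substitute the closed-loop $u_i^*$ from (\ref{follower new feedback strategy}) into (\ref{follower explicit state}) and take $\mathbb{E}[\cdot\mid\mathcal{F}_t^{W_i}]$ of both sides. The two key reductions are: (i) since $W$ and $W_0$ are independent of $\mathcal{F}_t^{W_i}$ and the integrands of the $dW$ and $dW_0$ stochastic integrals are progressively measurable with respect to the joint filtration, a Riemann-sum orthogonality argument (each increment $W_{s_{k+1}}-W_{s_k}$ is independent of $\mathcal{F}_t^{W_i}\vee\mathcal{F}_{s_k}^{W}$) shows that those conditional expectations vanish; (ii) by the tower property and independence of $W$ and $W_i$, $\mathbb{E}[\mathbb{E}[\bar{x}_i^*\mid\mathcal{F}_t^{W}]\mid\mathcal{F}_t^{W_i}]=\mathbb{E}\bar{x}_i^*=\mathbb{E}z$. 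Collecting the resulting drift and $dW_i$-coefficient after inserting the closed-loop $u_i^*$ reproduces (\ref{hat bar x_i}) termwise; wellposedness of the resulting linear SDE with bounded coefficients is standard.

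Finally, I would record the solvability of the supporting equations used along the way. The Riccati equation (\ref{RE P1}) is a standard LQ Riccati equation under Assumptions \ref{A2.1}--\ref{A2.2} and is uniquely solvable by Yong and Zhou \cite{Yong-Zhou-1999}, Chapter 6; (\ref{RE P2}) is a standard Riccati equation once Assumption \ref{A3.1} ensures the nonhomogeneous term is nonnegative; and (\ref{FBODE}) is unique under Assumption \ref{Assumption FBODE} via the variation-of-constants representation outlined before the proposition. With all auxiliary objects well defined, every term in (\ref{follower new feedback strategy}) and (\ref{hat bar x_i}) is meaningful. I expect the main obstacle to be the filtering step: one must carefully justify the vanishing of the conditional expectations of the $dW$ and $dW_0$ stochastic integrals against $\mathcal{F}_t^{W_i}$, which hinges on the independence of the Brownian motions together with the fact that, after substitution of the feedback, $u_i^*$ is $\mathcal{F}_t^{W_i}$-adapted so that the integrands have the adaptedness required for the orthogonality argument. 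Everything else is a bookkeeping of coefficients already performed in the preceding paragraphs.
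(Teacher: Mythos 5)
Your proposal is correct and follows essentially the same route as the paper's (much terser) proof: obtain (\ref{follower new feedback strategy}) from (\ref{follower feedback strategy}) via $\mathbb{E}\bar{x}_i^*\equiv\mathbb{E}z$ and the degeneration of (\ref{equation of hat varphi}) to the deterministic pair $(\mathbb{E}\varphi,0)$, then substitute the closed-loop control into the follower's state equation and project onto $\mathcal{F}_t^{W_i}$ to get (\ref{hat bar x_i}). The extra detail you supply — the verification/uniqueness argument for $(\hat{\varphi}_i,\hat{\xi}_i)=(\mathbb{E}\varphi,0)$ and the orthogonality justification for the vanishing of the $dW$ integral under conditioning — only fills in steps the paper leaves implicit.
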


\begin{proof}
(\ref{follower new feedback strategy}) can be obtained from (\ref{follower feedback strategy}) since $\hat{\xi}(\cdot)\equiv0$ in (\ref{equation of hat varphi}). (\ref{hat bar x_i}) can be obtained by putting (\ref{follower new feedback strategy}) into (\ref{follower limiting state}) and taking the conditional expectation $\mathbb{E}[\cdot \mid \mathcal{F}_t^{W_i}]$ on both sides of it.  The proof is complete.
\end{proof}

For notational simplicity and noticing (\ref{equation of E varphi}), we represent the state feedback optimal strategies (\ref{follower new feedback strategy}) of the followers as
\begin{equation}\label{follower feedback form simple}
u_i^*=K_1 \hat{\bar{x}}_i^*+K_2 \mathbb{E}z+K_3-\mathcal{R}_1^{-1} B_1 \mathbb{E} \varphi,\ \mathbb{P}\text{-a.s.},\ \text{a.e.}\ t\in[0,T],
\end{equation}
where
$$
\begin{aligned}
& \left\{\begin{array}{l}
K_1:=\mathcal{R}_1^{-1}\left(B_1+D_1 C_1+\widetilde{D}_1 \widetilde{C}_1\right)P_1 ,\quad K_2:=-\mathcal{R}_1^{-1}\left(B_1 P_2+D_1 F_1 P_1+\widetilde{D}_1 \widetilde{F}_1 P_1\right) ,\\
K_3:=-\mathcal{R}_1^{-1}\left(D_1 P_1 \sigma_1+\widetilde{D}_1 P_1 \widetilde{\sigma}_1\right),
\end{array}\right.
\end{aligned}
$$
and then
\begin{equation}\label{follower  E feedback form simple}
\mathbb{E} u_i^*=\left(K_1+K_2\right) \mathbb{E} z+K_3-\mathcal{R}_1^{-1} B_1 \mathbb{E} \varphi,\quad\mbox{for }i=1,2,\cdots.
\end{equation}

To conclude this subsection, let's discuss the well-posedness of the CC system (\ref{CC system}). Similar as Li et al. {\cite{Li-Nie-Wu-2023}}, we essentially employed the Riccati method.

After obtaining the optimal strategies of the followers in their feedback form (\ref{follower  E feedback form simple}), we substitute them into (\ref{CC system}) to obtain
\begin{equation}\label{CC system of decouple}
\left\{\begin{aligned}
 d \bar{x}_i^*=&\ \Big\{A_1 \bar{x}_i^*+B_1\left(K_1 \hat{\bar{x}}_i^*+K_2 \mathbb{E}z+K_3-\mathcal{R}_1^{-1} B_1 \mathbb{E} \varphi\right)+E_1 \mathbb{E}\left[\bar{x}_i^* \mid \mathcal{F}_t^{W}\right]
+b_1\Big\} d t \\
& +\Big\{C_1 \bar{x}_i^*+D_1\left(K_1 \hat{\bar{x}}_i^*+K_2 \mathbb{E}z+K_3-\mathcal{R}_1^{-1} B_1 \mathbb{E} \varphi\right)+F_1 \mathbb{E}\left[\bar{x}_i^* \mid \mathcal{F}_t^{W}\right]
+\sigma_1\Big\} d W_i \\
& +\left\{\widetilde{C}_1 \bar{x}_i^*+D_1\left(K_1 \hat{\bar{x}}_i^*+K_2 \mathbb{E}z+K_3-\mathcal{R}_1^{-1} B_1 \mathbb{E} \varphi\right)+\widetilde{F}_1 \mathbb{E}\left[\bar{x}_i^* \mid \mathcal{F}_t^{W}\right]
+\widetilde{\sigma}_1\right\} dW, \\
d \bar{x}_0 =&\ \Big[A_0 \bar{x}_0+B_0 u_0+E_0 \mathbb{E}\left[\bar{x}_i^* \mid \mathcal{F}_t^{W}\right]+b_0\Big] d t \\
& +\Big[C_0 \bar{x}_0+D_0 u_0+F_0 \mathbb{E}\left[\bar{x}_i^* \mid \mathcal{F}_t^{W}\right]+\sigma_0\Big] d W_0 \\
& +\left[\widetilde{C}_0 \bar{x}_0+\widetilde{D}_0 u_0+\widetilde{F}_0 \mathbb{E}\left[\bar{x}_i^* \mid \mathcal{F}_t^{W}\right]+\widetilde{\sigma}_0\right] d W , \\
d p_i=&-\left\{A_1 p_i+C_1 q_i+\widetilde{C}_1 \widetilde{q}_i-Q_1\left[\bar{x}_i^*-\left(\lambda \bar{x}_0+\left(1-\lambda\right) \mathbb{E}\left[\bar{x}_i^* \mid \mathcal{F}_t^{W}\right]
\right)\right]\right\} d t\\
&+q_i d W_i+\widetilde{q}_i d W+q_{i,0} d W_0 ,\\
 \bar{x}_i^*(0)=&\ \xi,\quad\bar{x}_0(0)=\xi_0,\quad p_i(T)=-G_1\bar{x}_i^*(T),\quad\mbox{for }i=1,2,\cdots.
\end{aligned}\right.
\end{equation}
Since we have already established the solvability of the FBODE (\ref{FBODE}), the first equation of (\ref{CC system of decouple}) is a filtered SDE. Taking $\mathbb{E}[\cdot \mid \mathcal{F}_t^{W}]$ on both sides of it, we get
\begin{equation}\label{equation of E[x|F^W]}
\left\{\begin{aligned}
 d \mathbb{E}\left[\bar{x}_i^* \mid \mathcal{F}_t^{W}\right]=
 &\ \Big\{(A_1+E_1) \mathbb{E}\left[\bar{x}_i^* \mid \mathcal{F}_t^{W}\right]+B_1\left(K_1 \mathbb{E}\bar{x}_i^*+K_2 \mathbb{E}z+K_3-\mathcal{R}_1^{-1} B_1 \mathbb{E} \varphi\right) +b_1\Big\} d t \\
& +\left\{(\widetilde{C}_1 + \widetilde{F}_1) \mathbb{E}\left[\bar{x}_i^* \mid \mathcal{F}_t^{W}\right]+K_1 \mathbb{E}\bar{x}_i^*+K_2 \mathbb{E}z+K_3-\mathcal{R}_1^{-1} B_1 \mathbb{E} \varphi +\widetilde{\sigma}_1\right\} dW, \\
\mathbb{E}\left[\bar{x}_i^* \mid \mathcal{F}_t^{W}\right](0)=&\ \xi,\quad\mbox{for }i=1,2,\cdots.
\end{aligned}\right.
\end{equation}
This is a linear SDE, we can easily derive its solvability by standard SDE theory. And the solvability of (\ref{hat bar x_i}) of $\hat{\bar{x}}_i(\cdot)$ can be guaranteed similarly. After these, we find that the first equation of (\ref{CC system of decouple}) is a linear SDE, which is also solvable. And the second equation of (\ref{CC system of decouple}) is a linear SDE, which also admits a unique solution $\bar{x}_0(\cdot)$. Then, the linear BSDE in (\ref{CC system of decouple}) admits a unique solution $\left(p_i(\cdot), q_i(\cdot), \widetilde{q}_i(\cdot), q_{i,0}(\cdot)\right)$. Therefore, the well-posedness of (\ref{CC system}) is obtained.

\subsection{Open loop decentralized strategy of the leader}

Noting that every follower take optimal strategies $u_i^*(\cdot)$, the leader $\mathcal{A}_0$ will face the following ``new" centralized state equation
\begin{equation}\label{leader state centralized new }
\left\{\begin{aligned}
d x_0= & \left[A_0 x_0+B_0 u_0+E_0 x^{*(N)}+b_0\right] d t  +\left[C_0 x_0+D_0 u_0+F_0 x^{*(N)}+\sigma_0\right] d W_0 \\
& +\left[\widetilde{C}_0 x_0+\widetilde{D}_0 u_0+\widetilde{F}_0 x^{*(N)}+\widetilde{\sigma}_0\right] d W , \\
d x_i^* =& \left[A_1 x_i+B_1\left(K_1 \hat{\bar{x}}_i^*+K_2 \mathbb{E}\bar{x}_i^*+K_3-\mathcal{R}_1^{-1} B_1 \mathbb{E} \varphi\right)+E_1 x^{*(N)}+b_1\right] d t \\
& +\left[C_1 x_i+D_1\left(K_1 \hat{\bar{x}}_i^*+K_2 \mathbb{E}\bar{x}_i^*+K_3-\mathcal{R}_1^{-1} B_1 \mathbb{E} \varphi\right)+F_1 x^{*(N)}+\sigma_1\right] d W_i \\
& +\left[\widetilde{C}_1 x_i+\widetilde{D}_1\left(K_1 \hat{\bar{x}}_i^*+K_2 \mathbb{E}\bar{x}_i^*+K_3-\mathcal{R}_1^{-1} B_1 \mathbb{E} \varphi\right)+\widetilde{F}_1 x^{*(N)}+\widetilde{\sigma}_1\right]  d W ,\\
d \mathbb{E} \varphi=&\left(M_1 \mathbb{E} \varphi+Q_1\lambda \mathbb{E} \bar{x}_0+M_2\right) d t, \\
 x_0(0)=&\ \xi_0, \quad x_i^*(0) =\xi, \quad \mathbb{E} \varphi(T)=0,\quad\mbox{for }i=1,2,\cdots,
\end{aligned}\right.
\end{equation}
where $x^{*(N)}(\cdot) := \frac{1}{N} \sum_{i=1}^N x_i^*(\cdot)$.

The centralized cost functional (\ref{leader cost}) of the leader $\mathcal{A}_0$ now writes
\begin{equation}\label{leader centralized cost functional new }
\mathcal{J}_0\left(u_0(\cdot)\right)=\frac{1}{2} \mathbb{E} \int_0^T\left[R_0 u_0^2+Q_0\left(x_0-x^{*(N)}\right)^2\right] d t+G_0 x_0^2(T).
\end{equation}
Then, from (\ref{leader limiting state}), (\ref{SLLN}), (\ref{z state}) and (\ref{follower feedback form simple}), the corresponding decentralized state equation of the leader $\mathcal{A}_0$ is
\begin{equation}\label{leader state decentralized new }
\left\{\begin{aligned}
d \bar{x}_0= & \Big[A_0 \bar{x}_0+B_0 u_0+E_0 z+b_0\Big] d t +\Big[C_0 \bar{x}_0+D_0 u_0+F_0 z+\sigma_0\Big] d W_0 \\
& +\Big[\widetilde{C}_0 \bar{x}_0+\widetilde{D}_0 u_0+\widetilde{F}_0 z+\widetilde{\sigma}_0\Big] d W,\\
 d z=&\Big[\left(A_1+E_1\right) z+B_1\left[\left(K_1+K_2\right) \mathbb{E} z+K_3-\mathcal{R}_1^{-1} B_1 \mathbb{E} \varphi\right]+b_1\Big] d t \\
&+\left[\left(\widetilde{C}_1+\widetilde{F}_1\right) z+\widetilde{D}_1\left[\left(K_1+K_2\right) \mathbb{E} z+K_3-\mathcal{R}_1^{-1} B_1 \mathbb{E} \varphi\right]+\widetilde{\sigma}_1\right] d W, \\
d \mathbb{E} \varphi=&\left(M_1 \mathbb{E} \varphi+Q_1\lambda \mathbb{E} \bar{x}_0+M_2\right) d t, \\
\bar{x}_0(0) =&\ \xi_0, \quad z(0)=\xi, \quad \mathbb{E} \varphi(T)=0,
\end{aligned}\right.
\end{equation}
and the centralized cost functional is
\begin{equation}\label{leader decentralized cost functional new}
J_0\left(u_0(\cdot)\right)=\frac{1}{2} \mathbb{E} \int_0^T\left[R_0 u_0^2+Q\left(\bar{x}_0-z\right)^2\right] d t+G \bar{x}_0^2(T).
\end{equation}

By applying the stochastic maximum principle with partial information, we can obtain the open-loop decentralized strategies of the leader $\mathcal{A}_0$ for Problem \ref{problem decentralized} (ii). For simplicity of notation, we let $\check{\xi}:=\mathbb{E}\left[\left.\xi\right\rvert \mathcal{F}_t^{W_0}\right]$ and the proof can be seen in this paper's arXiv version \cite{Si-Shi-arXiv2024}.

\begin{thm}\label{open loop thm1 of leader}
Let Assumptions \ref{A2.1}, \ref{A2.2}, \ref{A3.1} and \ref{Assumption FBODE} hold. Then the open-loop decentralized strategy of the leader $\mathcal{A}_0$ is given by
\begin{equation}\label{leader open loop optimal strategy}
u_0^*=R_0^{-1}\left(B_0 \check{y}_0+D_0 \check{z}_0+\widetilde{D}_0 \check{\widetilde{z}}_0\right),\ \mathbb{P}\text{-a.s.},\ \text{a.e.},
\end{equation}
where $\left(\bar{x}_0^*(\cdot),z^*(\cdot),\mathbb{E} \varphi^*(\cdot),y_0(\cdot),z_0(\cdot),\widetilde{z}_0(\cdot),g(\cdot),\widetilde{h}(\cdot),h_0(\cdot),\eta(\cdot)\right)$ satisfies the following stochastic Hamiltonian system
\end{thm}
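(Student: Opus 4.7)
The plan is to apply the stochastic maximum principle with partial information (as in Baghery--\O ksendal \cite{Baghery-Oksendal-2007} or Shi--Wang--Xiong \cite{Shi-Wang-Xiong-2016}) to the control problem of minimizing (\ref{leader decentralized cost functional new}) over $u_0 \in \mathcal{U}_0^{l,d}$ subject to the coupled forward-backward state system (\ref{leader state decentralized new}). Since this system is linear in $(\bar{x}_0, z, \mathbb{E}\varphi, u_0)$ and the cost is strictly convex in $u_0$ (using $R_0 \gg 0$, $Q_0 \geq 0$, $G_0 \geq 0$), a unique optimum exists and the first-order necessary condition derived from a convex perturbation is also sufficient, so it suffices to derive the variational equality in one direction.

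First, I would perform a convex variation $u_0^{\epsilon} = u_0^* + \epsilon v_0$ for arbitrary $v_0 \in \mathcal{U}_0^{l,d}$, and linearize (\ref{leader state decentralized new}) to obtain variational processes $(\delta \bar{x}_0, \delta z, \delta \mathbb{E}\varphi)$ with $\delta \bar{x}_0(0) = \delta z(0) = 0$ and $\delta \mathbb{E}\varphi(T) = 0$. Couplings appear because $z$ enters the drift and diffusions of $\bar{x}_0$, and $\mathbb{E}\bar{x}_0$ enters the drift of $\mathbb{E}\varphi$. Next, I would introduce adjoint processes: a backward adjoint $(y_0, z_0, \widetilde{z}_0)$ dual to the forward state $\bar{x}_0$ with terminal data determined by $G_0 \bar{x}_0^*(T)$, a backward adjoint $(g, \widetilde{h}, h_0)$ dual to $z$ (carrying both a $dW$ and a $dW_0$ martingale component because $z$ interacts with $\bar{x}_0$ through the $W$- and $W_0$-noises), and a forward adjoint $\eta$ dual to the backward variable $\mathbb{E}\varphi$ with initial condition $\eta(0) = 0$ (as is standard when dualizing a backward state).

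Applying It\^o's formula to $\langle \delta\bar{x}_0, y_0\rangle + \langle \delta z, g\rangle + \langle \delta\mathbb{E}\varphi, \eta\rangle$ and choosing the adjoint drifts so that all state-dependent terms cancel against the derivatives of the running cost, the first-order variation of $J_0$ reduces to
\begin{equation*}
\frac{d}{d\epsilon} J_0(u_0^{\epsilon})\Big|_{\epsilon = 0} = \mathbb{E} \int_0^T \bigl[R_0 u_0^* - B_0 y_0 - D_0 z_0 - \widetilde{D}_0 \widetilde{z}_0\bigr] v_0 \, dt.
\end{equation*}
Setting this to zero and invoking the tower property together with the $\mathcal{F}_t^{W_0}$-adaptedness of $v_0$, one obtains
\begin{equation*}
\mathbb{E}\bigl[R_0 u_0^* - B_0 y_0 - D_0 z_0 - \widetilde{D}_0 \widetilde{z}_0 \,\big|\, \mathcal{F}_t^{W_0}\bigr] = 0,\quad \text{a.e. } t \in [0,T],
\end{equation*}
and, since $u_0^*$ is itself $\mathcal{F}_t^{W_0}$-adapted, inverting $R_0$ yields the asserted formula (\ref{leader open loop optimal strategy}).

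The main obstacle is the mixed forward-backward-forward structure of the adjoint system combined with the several filtrations at play. Care is needed in assigning the correct martingale-representation terms to each adjoint so that the duality identity closes: $y_0$ is $\mathcal{F}_t^{W,W_0}$-adapted and therefore carries both $z_0\, dW_0$ and $\widetilde{z}_0\, dW$, while $g$ must carry both $\widetilde{h}\, dW$ and $h_0\, dW_0$ terms because the coupling through $E_0 z$, $F_0 z$, $\widetilde{F}_0 z$ pushes $W$- and $W_0$-sensitivities into the variational $\delta\bar{x}_0$. The $\eta$ equation is a forward ODE driven by $\mathbb{E}\bar{x}_0^*$ owing to the deterministic nature of (\ref{equation of E varphi}). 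Once these choices are made consistently with (\ref{leader state decentralized new}), the cancellation in the duality identity is routine and delivers the claimed Hamiltonian system together with (\ref{leader open loop optimal strategy}).
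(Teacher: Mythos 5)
Your proposal is correct and follows essentially the same route as the paper: the paper simply invokes the stochastic maximum principle with partial information for the leader's forward--backward control problem (deferring the computational details to its arXiv version), and your convex-variation/duality derivation -- with backward adjoints $(y_0,z_0,\widetilde z_0)$ and $(g,\widetilde h,h_0)$ for the forward states, a forward adjoint $\eta$ with $\eta(0)=0$ for the backward variable $\mathbb{E}\varphi$, and projection of the stationarity condition onto $\mathcal F_t^{W_0}$ -- is exactly the standard argument behind that principle and reproduces the Hamiltonian system (\ref{leader Hamiltonian system}) and the formula (\ref{leader open loop optimal strategy}). The convexity remark correctly supplies sufficiency, matching the paper's treatment of the followers' problem.
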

\begin{equation}\label{leader Hamiltonian system}
\left\{\begin{aligned}
d \bar{x}_0^*= & {\left[A_0 \bar{x}_0^*+B_0 R_0^{-1}\left(B_0 \check{y}_0+D_0 \check{z}_0+\widetilde{D}_0 \check{\widetilde{z}}_0\right)+E_0 z^*+b_0\right] d t }\\
&+\left[C_0 \bar{x}_0^*+D_0 R_0^{-1}\left(B_0 \check{y}_0+D_0 \check{z}_0+\widetilde{D}_0 \check{\widetilde{z}}_0\right)+F_0 z^*+\sigma_0\right] d W_0 \\
& +\left[\widetilde{C}_0\bar{x}_0^*+\widetilde{D}_0 R_0^{-1}\left(B_0 \check{y}_0+D_0 \check{z}_0+\widetilde{D}_0 \check{\widetilde{z}}_0\right)+\widetilde{F}_0 z^*+\widetilde{\sigma}_0\right] d W,\\
d z^*=&\Big[\left(A_1+E_1\right) z^*+B_1\left[\left(K_1+K_2\right) \mathbb{E} z^*+K_3-\mathcal{R}_1^{-1} B_1 \mathbb{E} \varphi^*\right]+b_1\Big] d t, \\
&+\left[\left(\widetilde{C}_1+\widetilde{F}_1\right) z^*+\widetilde{D}_1\left[\left(K_1+K_2\right) \mathbb{E} z^*+K_3-\mathcal{R}_1^{-1} B_1 \mathbb{E} \varphi^*\right]+\widetilde{\sigma}_1\right] d W ,\\
d \mathbb{E} \varphi^*=&\left(M_1 \mathbb{E} \varphi^*+Q_1 \lambda \mathbb{E} \bar{x}_0^*+M_2\right) d t, \\
d y_0 =&-\left(A_0 y_0+C_0 z_0+\widetilde{C}_0 \widetilde{z}_0-Q_0\left(\bar{x}_0^*-z^*\right)+Q_1 \lambda \mathbb{E} \eta\right) d t+z_0 d W_0+\widetilde{z}_0 d W,\\
d g= & -\left[E_0 y_0+F_0 z_0+\widetilde{F}_0 \widetilde{z}_0+\left(A_1+E_1\right) g+\left(\widetilde{C}_1+\widetilde{F}_1\right) \widetilde{h}+Q_0\left(\bar{x}_0^*-z^*\right)\right. \\
& \left.+B_1\left(K_1+K_2\right) \mathbb{E} g+\widetilde{D}_1\left(K_1+K_2\right) \mathbb{E} h\right]d t+\widetilde{h} d W+h_0 d W_0 ,\\
d \eta =&-\left(-\mathcal{R}_1^{-1} B_1^2 \mathbb{E} g-\mathcal{R}_1^{-1} B_1 \widetilde{D}_1 \mathbb{E} \widetilde{h}+M_1 \mathbb{E} \eta\right) d t,\\
\bar{x}_i^*(0) =&\ \xi_0,\ z^*(0)=\xi,\ \mathbb{E} \varphi^*(T)=0,\ y_0(T)  =-G_0\bar{x}_0^*(T),\ g(T)=0,\ \eta(0)=0.
\end{aligned}\right.
\end{equation}

Noticing (\ref{leader Hamiltonian system}) is also a CMF-FBSDE, we still discuss its well-posedness in the following subsection.

\subsection{Feedback decentralized strategy of the leader}

Now, we derive the state feedback of the open loop strategy of the leader $\mathcal{A}_0$, by the dimension expansion technique of Yong \cite{Yong-2002}. Let
$$
X:=\left[\begin{array}{c}
	\bar{x}_0^* \\
	z^* \\
	\eta
\end{array}\right],\quad Y:=\left[\begin{array}{c}
	y_0 \\
	g \\
	\mathbb{E} \varphi^*
\end{array}\right],\quad Z_0:=\left[\begin{array}{c}
	z_0 \\
	h_0 \\
	0
\end{array}\right],\quad \widetilde{Z}:=\left[\begin{array}{c}
	\widetilde{z}_0 \\
	\widetilde{h} \\
	0
\end{array}\right].
$$
Then, the Hamiltonian system (\ref{leader Hamiltonian system}) of the leader $\mathcal{A}_0$ can be rewritten as
\begin{equation}\label{dimension expansion}
	\left\{\begin{aligned}
		d X= & \left(L_{11} X+L_{12} \mathbb{E} X+L_{13} \mathbb{E} Y+L_{14} \mathbb{E} \widetilde{Z}+\mathcal{B}_0 u_0^*+f_1\right) d t \\
		& +\left(L_{21} X+\mathcal{D}_0 u_0^*+f_2\right) d W_0  +\left(L_{31} X+L_{32} \mathbb{E} X-L_{14}^\top \mathbb{E} Y+\widetilde{\mathcal{D}}_0 u_0^*+f_3\right) d W ,\\
		d Y= & \left(N_{11} X+N_{12} \mathbb{E} X-L_{11}^\top Y-L_{12}^\top \mathbb{E} Y-L_{21}^\top Z_0-L_{31}^\top \widetilde{Z}-L_{32}^\top \mathbb{E} \widetilde{Z}+f_4\right) d t \\
		& +Z_0 d W_0+\widetilde{Z} d W ,\\
		X(0)= &\ \Xi,\quad Y(T)=-\mathcal{G}_0 X(T),
	\end{aligned}\right.
\end{equation}
where we have denoted
$$
\begin{aligned}
	& L_{11}:=\left[\begin{array}{ccc}
		A_0 & E_0 & 0 \\
		0 & A_1+E_1 & 0 \\
		0 & 0 & -M_1
	\end{array}\right],
	L_{12}:=\left[\begin{array}{ccc}
		0 & 0 & 0 \\
		0 & B_1 (K_1+K_2) & 0 \\
		0 & 0 & 0
	\end{array}\right],
	\mathcal{B}_0:=\left[\begin{array}{c}
		B_0  \\
		0 \\
		0
	\end{array}\right],  \\
	& L_{13}:=\left[\begin{array}{ccc}
		0 & 0 & 0 \\
		0 & 0 & -\mathcal{R}_1^{-1} B_1^2 \\
		0 & \mathcal{R}_1^{-1} B_1^2 & 0
	\end{array}\right],
	L_{14}:=\left[\begin{array}{ccc}
		0 & 0 & 0 \\
		0 & 0 & 0 \\
		0 & \mathcal{R}_1^{-1} B_1 \widetilde{D}_1 & 0
	\end{array}\right] ,
	f_1:=\left[\begin{array}{c}
		b_0 \\
		B_0 K_3+b_1 \\
		0
	\end{array}\right],\\
	& L_{21}:=\left[\begin{array}{ccc}
		C_0 & F_0 & 0 \\
		0 & 0 & 0 \\
		0 & 0 & 0
	\end{array}\right] ,
	\mathcal{D}_0:=\left[\begin{array}{c}
		D_0  \\
		0  \\
		0
	\end{array}\right] ,
	f_2:=\left[\begin{array}{c}
		\sigma_0 \\
		0 \\
		0
	\end{array}\right],
	L_{31}:=\left[\begin{array}{ccc}
		\widetilde{C}_0 & \widetilde{F}_0 & 0 \\
		0 & \widetilde{C}_1+\widetilde{F}_1 & 0 \\
		0 & 0 & 0
	\end{array}\right] ,\\
	& L_{32}:=\left[\begin{array}{ccc}
		0 & 0 & 0 \\
		0 & \widetilde{D}_1\left(K_1+K_2\right) & 0 \\
		0 & 0 & 0
	\end{array}\right] ,
	\widetilde{\mathcal{D}}_0:=\left[\begin{array}{c}
		\widetilde{D}_0 \\
		0  \\
		0
	\end{array}\right] ,
	f_3:=\left[\begin{array}{c}
		\widetilde{\sigma}_0 \\
		\widetilde{D}_0 K_3+\tilde{\sigma}_1 \\
		0
	\end{array}\right],\\
	& N_{11}:=\left[\begin{array}{ccc}
		Q_0 & -Q_0 & 0 \\
		-Q_0 & Q_0 & 0 \\
		0 & 0 & 0
	\end{array}\right] ,
	N_{12}:=\left[\begin{array}{ccc}
		0 & 0 & -Q_1 \lambda \\
		0 & 0 & 0 \\
		Q_1 \lambda & 0 & 0
	\end{array}\right] ,\\
	& f_{4}:=\left[\begin{array}{l}
		0 \\
		0 \\
		M_2
	\end{array}\right],
	\Xi:=\left[\begin{array}{l}
		\xi_0 \\
		\xi \\
		0
	\end{array}\right],
	\mathcal{G}_0:=\left[\begin{array}{ccc}
		G_0 & 0 & 0 \\
		0 & 0 & 0 \\
		0 & 0 & 0
	\end{array}\right] .
\end{aligned}
$$
Noticing the terminal condition of (\ref{dimension expansion}), we can assume that
\begin{equation}\label{leader decouple equation}
Y(t)=-\Gamma_1(t) X(t)-\Gamma_2(t) \mathbb{E} X(t)-\Phi(t),\quad t\in[0,T],
\end{equation}
where $\Gamma_1(\cdot)$ and $\Gamma_2(\cdot)$ are deterministic matrix-valued functions satisfying $\Gamma_1(T)=\mathcal{G}_0$ and $\Gamma_2(T)=\mathrm{O}_{3 \times 3}$, and $(\Phi(\cdot),\psi_0(\cdot))$ are an $\mathcal{F}_t^{W_0}$-adapted process pair of the solution to the following BSDE:
\begin{equation}\label{Phi equation}
	\left\{\begin{array}{l}
		d \Phi=\gamma d t+\psi_0 d W_0 ,\\
		\Phi(T)=\mathrm{O}_{3 \times 3}.
	\end{array}\right.
\end{equation}
for some $\mathcal{F}_t^{W_0}$-adapted process $\gamma(\cdot)$ to be determined later. In addition, from the first equation of (\ref{dimension expansion}), we have
\begin{equation}\label{EX equation}
	\left\{\begin{aligned}
		d \mathbb{E} X&=\left[\left(L_{11}+L_{12}\right) \mathbb{E} X+L_{13} \mathbb{E} Y+L_{14} \mathbb{E} \widetilde{Z}+\mathcal{B}_0 \mathbb{E} u_0^*+f_1\right] d t, \\
		\mathbb{E} X(0)&=\ \Xi.
	\end{aligned}\right.
\end{equation}
Applying It\^{o}'s formula, we have
$$
\begin{aligned}
	d Y= & -\Gamma_1 d X-\dot{\Gamma}_1 X d t-\dot{\Gamma}_2 \mathbb{E} X d t-\Gamma_2 d \mathbb{E} X-d \Phi \\
	= & -\Gamma_1\left\{\left(L_{11} X+L_{12} \mathbb{E} X+L_{13} \mathbb{E} Y+L_{14} \mathbb{E} \widetilde{Z}+\mathcal{B}_0 u_0^*+f_1\right) d t\right. \\
	& +\left(L_{21} X+\mathcal{D}_0 u_0^*+f_2\right) d W_0 \left.+\left(L_{31} X+L_{32} \mathbb{E} X-L_{14}^\top \mathbb{E}Y+\widetilde{\mathcal{D}}_0 u_0^*+f_3\right) d W\right\} \\
	&-  \dot{\Gamma}_1 X d t-\dot{\Gamma}_2 \mathbb{E} X d t-\Gamma_2\left[\left(L_{11}+L_{12}\right) \mathbb{E} X+L_{13} \mathbb{E}Y+L_{14} \mathbb{E} \widetilde{Z}+\mathcal{B}_0 \mathbb{E} u_0^*+f_1\right] d t \\
	& -\gamma d t-\psi_0 d W_0 \\
	= & \left(N_{11} X+N_{12} \mathbb{E} X-L_{11}^\top Y-L_{12}^\top \mathbb{E} Y-L_{21}^\top Z_0-L_{31}^\top \widetilde{Z}-L_{32}^\top \mathbb{E} \widetilde{Z}+f_4\right) d t \\
	& +Z_0 d W_0+\widetilde{Z} d W.
\end{aligned}
$$
By comparing the coefficients of the drift terms and the diffusion terms, we obtain
\begin{equation}\label{compare the coefficients of leader }
\left\{\begin{aligned}
	 -\gamma=&\left(\dot{\Gamma}_1+\Gamma_1 L_{11}+N_{11}\right) X+\left[\dot{\Gamma}_2+\Gamma_1 L_{12}+\Gamma_2\left(L_{11}+L_{12}\right)+N_{12}\right] \mathbb{E} X-L_{11}^\top Y \\
	& +\left[\Gamma_1 L_{13}+\Gamma_2 L_{13}-L_{12}^\top\right] \mathbb{E} Y-L_{21}^\top Z_0-L_{31}^\top \widetilde{Z}+\left(\Gamma_1 L_{14}+\Gamma_2 L_{14}-L_{32}^\top\right) \mathbb{E} \widetilde{Z} \\
	& +\Gamma_1 \mathcal{B}_0 u_0^*+\Gamma_2 \mathcal{B}_0 \mathbb{E} u_0^*+\Gamma_1 f_1+\Gamma_2 f_1+f_4 ,\\
	   Z_0=&-\Gamma_1 L_{21} X-\Gamma_1 \mathcal{D}_0 u_0^*-\Gamma_1 f_2-\psi_0 ,\ \mathbb{P}\text{-a.s.},\\
\widetilde{Z}=&-\Gamma_1 L_{31} X-\Gamma_1 L_{32} \mathbb{E} X+\Gamma_1 L_{14}^{\top} \mathbb{E} Y-\Gamma_1 \widetilde{\mathcal{D}}_0 u_0^*-\Gamma_1 f_3,\ \mathbb{P}\text{-a.s.}.
\end{aligned}\right.
\end{equation}
Taking $\mathbb{E}\left[\left.\cdot\right\rvert \mathcal{F}_t^{W_0}\right]$ on the both sides of (\ref{leader decouple equation}) and the last two equalities in (\ref{compare the coefficients of leader }), we have
\begin{equation}\label{check Y Z}
\left\{\begin{aligned}
	\check{Y}&=-\Gamma_1 \check{X}-\Gamma_2 \mathbb{E} X-\check{\Phi} ,\\
	\check{Z}_0&=-\Gamma_1 L_{21} \check{X}-\Gamma_1 \mathcal{D}_0 u_0^*-\Gamma_1 f_2-\check{\psi}_0,\ \mathbb{P}\text{-a.s.}, \\
	\check{\widetilde{Z}}&=-\Gamma_1 L_{31} \check{X}-\Gamma_1 L_{32} \mathbb{E} X+\Gamma_1 L_{14}^\top\mathbb{E} Y-\Gamma_1 \widetilde{\mathcal{D}_0} u_0^*-\Gamma_1 f_3.\\
	&=-\Gamma_1 L_{31}\check{X}-\Gamma_1 (L_{32}+L_{14}^\top\Gamma_1+L_{14}^\top\Gamma_2)\mathbb{E} X-\Gamma_1 L_{14}^\top \mathbb{E} \Phi-\Gamma_1 \widetilde{\mathcal{D}}_0 u^*_0-\Gamma_1 f_3,\ \mathbb{P}\text{-a.s.}.
\end{aligned}\right.
\end{equation}
Therefore, the open loop decentralized strategy (\ref{leader open loop optimal strategy}) of the leader $\mathcal{A}_0$ can be rewritten as
$$
\begin{aligned}
	& \left(R_0+\mathcal{D}_0^\top \Gamma_1 \mathcal{D}_0+\widetilde{\mathcal{D}}_0^\top \Gamma_1 \widetilde{\mathcal{D}}_0\right) u_0^* \\
	=&-\left[\left(\mathcal{B}_0^\top \Gamma_1+\mathcal{D}_0^\top \Gamma_1 L_{21}+\widetilde{\mathcal{D}}^\top_0 \Gamma_1 L_{31}\right) \check{X}
    +\left(\mathcal{B}_0^\top \Gamma_2+\widetilde{\mathcal{D}}_0^\top \Gamma_1 L_{32}+\widetilde{\mathcal{D}}_0^\top \Gamma_1 L_{14}^\top (\Gamma_1 +\Gamma_2)\right) \mathbb{E} X\right. \\
	&\quad \left.+\mathcal{B}_0^\top \check{\Phi}+\mathcal{D}_0^\top \Gamma_1 f_2+\mathcal{D}_0^\top \check{\psi}_0+\widetilde{\mathcal{D}}_0^\top \Gamma_1 L_{14}^\top \mathbb{E} \Phi
    +\widetilde{\mathcal{D}}_0^\top\Gamma_1 f_3\right], \ \mathbb{P}\text{-a.s.},\ \text{a.e.}\ t\in[0,T].
\end{aligned}
$$

Now, we need to assume that
\begin{assumption}\label{A3.2}\quad
$\mathcal{R}_0:=R_0+\mathcal{D}_0^\top \Gamma_1 \mathcal{D}_0+\widetilde{\mathcal{D}}_0^\top \Gamma_1 \widetilde{\mathcal{D}}_0 \neq 0,$
\end{assumption}
Then
\begin{equation}\label{leader feedback strategy}
\begin{aligned}
	u_0^*=&-\mathcal{R}_0^{-1}\left[\left(\mathcal{B}_0^\top \Gamma_1+\mathcal{D}_0^\top \Gamma_1 L_{21}+\widetilde{\mathcal{D}}^\top_0 \Gamma_1 L_{31}\right) \check{X}\right.\\
    &\quad +\left(\mathcal{B}_0^\top \Gamma_2+\widetilde{\mathcal{D}}_0^\top \Gamma_1 L_{32}+\widetilde{\mathcal{D}}_0^\top \Gamma_1 L_{14}^\top (\Gamma_1+\Gamma_2)\right) \mathbb{E} X\\
    &\quad \left.+\mathcal{B}_0^\top\check{\Phi}+\widetilde{\mathcal{D}}_0^\top \Gamma_1 L_{14}^\top \mathbb{E}\Phi+\mathcal{D}_0^\top \Gamma_1 f_2+\mathcal{D}_0^\top \check{\psi}_0
    +\widetilde{\mathcal{D}}_0^\top\Gamma_1 f_3\right], \ \mathbb{P}\text{-a.s.},\ \text{a.e.}\ t\in[0,T],
\end{aligned}
\end{equation}
and
\begin{equation}\label{leader E feedback strategy}
\begin{aligned}
	\mathbb{E} u_0^*=&-\mathcal{R}_0^{-1}\left[\left(\mathcal{B}_0^\top \Gamma_1+\mathcal{D}_0^\top \Gamma_1 L_{21}+\widetilde{\mathcal{D}}^\top_0 \Gamma_1 L_{31} +\mathcal{B}_0^\top \Gamma_2
    +\widetilde{\mathcal{D}}_0^\top \Gamma_1 L_{32}+\widetilde{\mathcal{D}}_0^\top \Gamma_1 L_{14}^\top (\Gamma_1+\Gamma_2)\right) \mathbb{E} X\right. \\
	& \left.+\left(\mathcal{B}_0^\top+\widetilde{\mathcal{D}}_0^\top \Gamma_1 L_{14}^\top\right) \mathbb{E} \Phi+\mathcal{D}_0^\top \Gamma_1 f_2+\mathcal{D}_0^\top \mathbb{E}\psi_0
    +\widetilde{\mathcal{D}}_0^\top\Gamma_1 f_3\right].
\end{aligned}
\end{equation}
Similarly, taking $\mathbb{E}\left[\left.\cdot\right\rvert \mathcal{F}_t^{W_0}\right]$ on the both sides of the first equality in (\ref{compare the coefficients of leader }), and considering (\ref{leader feedback strategy}) and (\ref{leader E feedback strategy}), we get

$$
\begin{aligned}
-\check{\gamma}=&\left\{\left[\dot{\Gamma}_1+\Gamma_1 L_{11}+N_{11}+L_{11}^\top \Gamma_1+L_{21}^\top \Gamma_1 L_{21}+L_{31}^\top \Gamma_1 L_{31}\right. \right.\\
& \left.+\left(L_{21}^\top \Gamma_1 \mathcal{D}_0+L_{31}^\top \Gamma_1 \widetilde{\mathcal{D}_0}+\Gamma_1\mathcal{B}_0\right) \mathcal{R}_0^{-1}\left(-\mathcal{B}^\top_0 \Gamma_1
 -\mathcal{D}^\top_0 \Gamma_1 L_{21}-\widetilde{\mathcal{D}}^\top_0 \Gamma_1 L_{31}\right)\right] \check{X}\\
& +\left\{\dot{\Gamma}_2+\Gamma_1 L_{12}+\Gamma_2 L_{11}+\Gamma_2L_{12}+N_{12}+L_{11}^\top \Gamma_2-\Gamma_1 L_{13}\left(\Gamma_1+\Gamma_2\right)\right. \\
& -\Gamma_2 L_{13}\left(\Gamma_1+\Gamma_2\right)+L_{12}^\top\left(\Gamma_1+\Gamma_2\right)+L_{31}^\top \Gamma_1 L_{32}+L_{31}^\top \Gamma_1 L_{14}^\top \Gamma_1+L_{31}^\top \Gamma_1 L_{14}^\top \Gamma_2\\
& -\left(\Gamma_1 L_{14}+\Gamma_2 L_{14}-L_{32}^\top\right)\Gamma_1\left( L_{31}+L_{32}+L_{14}^\top \Gamma_1+L_{14}^\top \Gamma_2\right)\\
& +\left(L_{21}^\top \Gamma_1 \mathcal{D}_0+L_{31}^\top \Gamma_1 \widetilde{\mathcal{D}}_0+\Gamma_1\mathcal{B}_0\right) \mathcal{R}_0^{-1}\left(-\mathcal{B}_0^\top \Gamma_2
 -\widetilde{\mathcal{D}}_0^\top \Gamma_1 L_{32}-\widetilde{\mathcal{D}}_0^\top\Gamma_1 L_{14}^\top \Gamma_1- \widetilde{\mathcal{D}}_0^\top\Gamma_1 L_{14}^\top \Gamma_2\right) \\
&+\left[\Gamma_2\mathcal{B}_0-\left(\Gamma_1 L_{14}+\Gamma_2 L_{14}-L_{32}^\top\right) \Gamma_1 \widetilde{\mathcal{D}}_0 \right]\mathcal{R}_0^{-1}
 \left(-\mathcal{B}_0^\top \Gamma_1-\mathcal{D}_0^\top \Gamma_1 L_{21}\right.\\
&\left.\left.-\widetilde{\mathcal{D}}^\top_0 \Gamma_1 L_{31} -\mathcal{B}_0^\top \Gamma_2
 -\widetilde{\mathcal{D}}_0^\top \Gamma_1 L_{32}-\widetilde{\mathcal{D}}^\top_0 \Gamma_1 L_{14}^\top \Gamma_1-\widetilde{\mathcal{D}}_0^\top \Gamma_1 L_{14}^\top \Gamma_2\right)\right\} \mathbb{E} X\\
&+\left(L_{21}^\top \Gamma_1 \mathcal{D}_0+L_{31}^\top \Gamma_1 \widetilde{\mathcal{D}}_0+\Gamma_1\mathcal{B}_0\right) \mathcal{R}_0^{-1}\left(-\mathcal{B}_0^\top \check{\Phi}
 -\widetilde{\mathcal{D}}_0^\top \Gamma_1 L_{14}^\top\mathbb{E}\Phi-\mathcal{D}_0^\top \check{\psi}_0-\mathcal{D}_0^\top \Gamma_1 f_2\right.\\
&\left.-\widetilde{\mathcal{D}}_0^\top \Gamma_1 f_3\right)+\left[\Gamma_2\mathcal{B}_0-\left(\Gamma_1 L_{14}+\Gamma_2 L_{14}-L_{32}^\top\right) \Gamma_1 \widetilde{\mathcal{D}}_0 \right]\mathcal{R}_0^{-1}\\
&\times\left[\left(-\widetilde{\mathcal{D}}_0^\top \Gamma_1 L_{14}^\top-\mathcal{B}_0^\top\right) \mathbb{E} \Phi-\mathcal{D}_0^\top \mathbb{E} \psi_0-\mathcal{D}_0^\top \Gamma_1 f_2
 -\widetilde{\mathcal{D}}_0^\top \Gamma_1 f_3\right]+L_{11}^\top \check{\Phi}\\
&+\left[-\Gamma_1 L_{13}-\Gamma_2 L_{13}+L_{12}^\top+L_{31}^\top \Gamma_1 L_{14}^\top-\left(\Gamma_1 L_{14}+\Gamma_2 L_{14}-L_{32}^\top\right) \Gamma_1 L_{14}^\top\right] \mathbb{E} \Phi \\
& +L_{21}^\top \check{\psi}_0+L_{21}^\top \Gamma_{1} f_2+L_{31}^\top \Gamma_1 f_3-\left(\Gamma_1 L_{14}+\Gamma_2 L_{14}-L_{32}^{\top}\right) \Gamma_1 f_3+\Gamma_1 f_1+\Gamma_2 f_1+f_4 .\\
\end{aligned}
$$
Then, we can get the equation of $\Gamma_1(\cdot)$:
\begin{equation}\label{Gamma1 equation}
\left\{\begin{aligned}
    &\dot{\Gamma}_1+\Gamma_1 L_{11}+N_{11}+L_{11}^\top \Gamma_1+L_{21}^\top \Gamma_1 L_{21}+L_{31}^\top \Gamma_1 L_{31}\\
	&+\left(L_{21}^\top \Gamma_1 \mathcal{D}_0+L_{31}^\top \Gamma_1 \widetilde{\mathcal{D}_0}+\Gamma_1\mathcal{B}_0\right) \mathcal{R}_0^{-1}\left(-\mathcal{B}^\top_0 \Gamma_1
    -\mathcal{D}^\top_0 \Gamma_1 L_{21}-\widetilde{\mathcal{D}}^\top_0 \Gamma_1 L_{31}\right)=0, \\
    &\Gamma_1(T)=\mathcal{G}_0,
\end{aligned}\right.
\end{equation}
the equation of $\Gamma_2(\cdot)$:
\begin{equation}\label{Gamma2 equation}
\left\{\begin{aligned}
    &\dot{\Gamma}_2+\Gamma_1 L_{12}+\Gamma_2 L_{11}+\Gamma_2L_{12}+N_{12}+L_{11}^\top \Gamma_2-\Gamma_1 L_{13}\left(\Gamma_1+\Gamma_2\right) \\
	& -\Gamma_2 L_{13}\left(\Gamma_1+\Gamma_2\right)+L_{12}^\top\left(\Gamma_1+\Gamma_2\right)+L_{31}^\top \Gamma_1 L_{32}+L_{31}^\top \Gamma_1 L_{14}^\top \Gamma_1+L_{31}^\top \Gamma_1 L_{14}^\top \Gamma_2\\
	& -\left(\Gamma_1 L_{14}+\Gamma_2 L_{14}-L_{32}^\top\right)\Gamma_1\left( L_{31}+L_{32}+L_{14}^\top \Gamma_1+L_{14}^\top \Gamma_2\right)\\
	& +\left(L_{21}^\top \Gamma_1 \mathcal{D}_0+L_{31}^\top \Gamma_1 \widetilde{\mathcal{D}}_0+\Gamma_1\mathcal{B}_0\right) \mathcal{R}_0^{-1}\left(-\mathcal{B}_0^\top \Gamma_2
    -\widetilde{\mathcal{D}}_0^\top \Gamma_1 L_{32}-\widetilde{\mathcal{D}}_0^\top\Gamma_1 L_{14}^\top \Gamma_1\right. \\
	&\left.-\widetilde{\mathcal{D}}_0^\top\Gamma_1 L_{14}^\top \Gamma_2\right)+\left[\Gamma_2\mathcal{B}_0-\left(\Gamma_1 L_{14}+\Gamma_2 L_{14}
    -L_{32}^\top\right) \Gamma_1 \widetilde{\mathcal{D}}_0 \right]\mathcal{R}_0^{-1}\left(-\mathcal{B}_0^\top \Gamma_1\right.\\
	&\left.-\mathcal{D}_0^\top \Gamma_1 L_{21}-\widetilde{\mathcal{D}}^\top_0 \Gamma_1 L_{31} -\mathcal{B}_0^\top \Gamma_2-\widetilde{\mathcal{D}}_0^\top \Gamma_1 L_{32}
    -\widetilde{\mathcal{D}}^\top_0 \Gamma_1 L_{14}^\top \Gamma_1-\widetilde{\mathcal{D}}_0^\top \Gamma_1 L_{14}^\top \Gamma_2\right) =0,\\
    &\Gamma_2(T)=\mathrm{O}_{3 \times 3},
\end{aligned}\right.
\end{equation}
and the BSDE of $(\check{\Phi}(\cdot),\check{\psi}_0(\cdot))$:
\begin{equation}\label{equation of check Phi}
\left\{\begin{aligned}
    -d \check{\Phi}=& \left\{\left(L_{21}^\top \Gamma_1 \mathcal{D}_0+L_{31}^\top \Gamma_1 \widetilde{\mathcal{D}}_0+\Gamma_1\mathcal{B}_0\right) \mathcal{R}_0^{-1}\left[-\mathcal{B}_0^\top \check{\Phi}
    -\widetilde{\mathcal{D}}_0^\top \Gamma_1 L_{14}^\top\mathbb{E}\Phi-\mathcal{D}_0^\top \check{\psi}_0 \right.\right.\\
	&\left.-\mathcal{D}_0^\top \Gamma_1 f_2-\widetilde{\mathcal{D}}_0^\top \Gamma_1 f_3\right]+\left[\Gamma_2\mathcal{B}_0-\left(\Gamma_1 L_{14}+\Gamma_2 L_{14}
    -L_{32}^\top\right) \Gamma_1 \widetilde{\mathcal{D}}_0 \right]\mathcal{R}_0^{-1}\\
	&\times\left[\left(-\widetilde{\mathcal{D}}_0^\top \Gamma_1 L_{14}^\top-\mathcal{B}_0^\top\right) \mathbb{E} \Phi-\mathcal{D}_0^\top \mathbb{E} \psi_0-\mathcal{D}_0^\top \Gamma_1 f_2
    -\widetilde{\mathcal{D}}_0^\top \Gamma_1 f_3\right]+L_{11}^\top \check{\Phi}\\
	&+\left[-\Gamma_1 L_{13}-\Gamma_2 L_{13}+L_{12}^\top+L_{31}^\top \Gamma_1 L_{14}^\top-\left(\Gamma_1 L_{14}+\Gamma_2 L_{14}-L_{32}^\top\right) \Gamma_1 L_{14}^\top\right] \mathbb{E} \Phi \\
	&\left. +L_{21}^\top \check{\psi}_0+L_{21}^\top \Gamma_{1} f_2+L_{31}^\top \Gamma_1 f_3-\left(\Gamma_1 L_{14}+\Gamma_2 L_{14}-L_{32}^\top\right) \Gamma_1 f_3\right.\\
    &+\Gamma_1 f_1+\Gamma_2 f_1+f_4 \Big\} dt-\check{\psi}_0 d W_0,\\
    \check{\Phi}(T)=&\ \mathrm{O}_{3 \times 3}.
\end{aligned}\right.
\end{equation}
respectively. By the existence and uniqueness of the solution to the mean-field type BSDE \cite{Li-Sun-Xiong-2019} again, (\ref{equation of check Phi}) degenerates to
\begin{equation}\label{equation of E Phi}
\left\{\begin{aligned}
    -d \mathbb{E}\Phi=& \bigg\{\left\{L_{11}^\top+L_{21}^\top+\left(L_{31}+L_{32}\right)^\top\Gamma_1L_{14}^\top-(\Gamma_1+\Gamma_2)\left(L_{13}+L_{14}\Gamma_1L_{14}^\top\right.\right) \\
    &-\left[L_{21}^\top\Gamma_1\mathcal{D}_0+L_{31}^\top\Gamma_1\widetilde{\mathcal{D}}_0+\Gamma_1\mathcal{B}_0+\Gamma_2\mathcal{B}_0
    +\left(\Gamma_1L_{14}+\Gamma_2L_{14}+L_{32}^\top\right)\Gamma_1\widetilde{\mathcal{D}}_0\right]\\
	&\left.\times\mathcal{R}_0^{-1}\left(\mathcal{B}_0^\top+\widetilde{\mathcal{D}}_0^\top \Gamma_1 L_{14}^\top\right)\right\}\mathbb{E}\Phi
    +\left[\left(\Gamma_1+\Gamma_2\right)\left(L_{14}\Gamma_2\widetilde{\mathcal{D}}_0-\mathcal{B}_0\right)\right.\\
    &\left.-\left(L_{21}+L_{31}+L_{32}\right)^\top\Gamma_1\widetilde{\mathcal{D}}_0\right]\mathcal{R}_0^{-1}\left(\mathcal{D}_0^\top\Gamma_1f_2+\widetilde{\mathcal{D}}_0^\top\Gamma_1f_3\right)+L_{21}^\top \Gamma_{1} f_2\\
	&+L_{31}^\top \Gamma_1 f_3-\left(\Gamma_1 L_{14}+\Gamma_2 L_{14}-L_{32}^\top\right) \Gamma_1 f_3+\Gamma_1 f_1+\Gamma_2 f_1+f_4  \bigg\} dt,\\
    \mathbb{E}\Phi(T)=&\ \mathrm{O}_{3 \times 3}.
\end{aligned}\right.
\end{equation}

\begin{remark}
We note that (\ref{Gamma1 equation}) is symmetric Riccati equation, which admits unique solution by applying the Theorem 3.7 of \cite{Li-Nie-Wu-2023}. But (\ref{Gamma2 equation}) is an asymmetric one, which is hard to solve in general. In this paper, we need to assume the solvability of (\ref{Gamma2 equation}). Then noticing the equation (\ref{equation of E Phi}) is linear BODE, which obviously admits a unique solution.
\end{remark}

Therefore, the feedback decentralized strategy of the leader $\mathcal{A}_0$ can be given in the following.

\begin{thm}\label{theorem feedback of leader}
Let Assumptions \ref{A2.1}, \ref{A2.2}, \ref{A3.1},  \ref{Assumption FBODE} and \ref{A3.2} hold. Let $\Gamma_1(\cdot)$ be the solution to (\ref{Gamma1 equation}), and suppose that (\ref{Gamma2 equation}) admits a unique solution $\Gamma_2(\cdot)$, then the feedback decentralized strategy of the leader $\mathcal{A}_0$ can be represented as
\begin{equation}\label{leader new feedback strategy}
\begin{aligned}
u_0^*=&-\mathcal{R}_0^{-1}\left[\left(\mathcal{B}_0^\top \Gamma_1+\mathcal{D}_0^\top \Gamma_1 L_{21}+\widetilde{\mathcal{D}}^\top_0 \Gamma_1 L_{31}\right) \check{X}
       +\left(\mathcal{B}_0^\top \Gamma_2+\widetilde{\mathcal{D}}_0^\top \Gamma_1 L_{32}\right.\right.\\
      &\qquad\quad \left.+\widetilde{\mathcal{D}}_0^\top \Gamma_1 L_{14}^\top \Gamma_1+\widetilde{\mathcal{D}}_0^\top \Gamma_1 L_{14}^\top \Gamma_2\right) \mathbb{E} X
       +\left(\mathcal{B}_0^\top+\widetilde{\mathcal{D}}_0^\top \Gamma_1 L_{14}^\top\right) \mathbb{E}\Phi\\
      &\qquad\quad \left. +\mathcal{D}_0^\top \Gamma_1 f_2+\widetilde{\mathcal{D}}_0^\top\Gamma_1 f_3\right],\quad \mathbb{P}\text{-a.s.},\ \text{a.e.}\ t\in[0,T],
\end{aligned}
\end{equation}
where $\check{X}(\cdot)\in L_{\mathcal{F}_t^{W_0}}^2\left(0, T ; \mathbb{R}^3\right)$ satisfied the following SDE:
\begin{equation}\label{check X}
\left\{
\begin{aligned}
 d \check{X}= & \left\{\left[L_{11}-\left(\mathcal{B}_0-L_{14}\Gamma_1\widetilde{\mathcal{D}}_0\right)\mathcal{R}_0^{-1}\left(\mathcal{B}_0^\top \Gamma_1
               +\mathcal{D}_0^\top \Gamma_1 L_{21}+\widetilde{\mathcal{D}}^\top_0 \Gamma_1 L_{31}\right) \right]\check{X}\right. \\
              &+\left[L_{12}-\left(L_{13}+L_{14}\Gamma_1L_{14}^\top\right)\left(\Gamma_1+\Gamma_2\right)-L_{14}\Gamma_1\left(L_{31}+L_{32}\right)\right. \\
              &\left.-\left(\mathcal{B}_0-L_{14}\Gamma_1\widetilde{\mathcal{D}}_0\right)\mathcal{R}_0^{-1}\left(\mathcal{B}_0^\top \Gamma_2+\widetilde{\mathcal{D}}_0^\top \Gamma_1 L_{32}
              +\widetilde{\mathcal{D}}_0^\top \Gamma_1 L_{14}^\top \Gamma_1+\widetilde{\mathcal{D}}_0^\top \Gamma_1 L_{14}^\top \Gamma_2\right)\right]\mathbb{E}X \\
              & \left.-\left[L_{13}+L_{14}\Gamma_1L_{14}^\top+\left(\mathcal{B}_0-L_{14}\Gamma_1\widetilde{\mathcal{D}}_0\right)\mathcal{R}_0^{-1}\left(\mathcal{B}_0^\top
               +\widetilde{\mathcal{D}}_0^\top \Gamma_1 L_{14}^\top\right)\right]\mathbb{E}\Phi\right. \\
              &\left.+f_1-L_{14}\Gamma_1f_3-\left(\mathcal{B}_0-L_{14}\Gamma_1\widetilde{\mathcal{D}}_0\right)\mathcal{R}_0^{-1}\left(\mathcal{D}_0^\top \Gamma_1 f_2
               +\widetilde{\mathcal{D}}_0^\top\Gamma_1 f_3\right)\right\} d t \\
              &+\left\{\left[L_{21}-\mathcal{D}_0\mathcal{R}_0^{-1}\left(\mathcal{B}_0^\top \Gamma_1
               +\mathcal{D}_0^\top \Gamma_1 L_{21}+\widetilde{\mathcal{D}}^\top_0 \Gamma_1 L_{31}\right)\right]\check{X}\right.\\
              &\left.-\mathcal{D}_0\mathcal{R}_0^{-1}\left(\mathcal{B}_0^\top \Gamma_2+\widetilde{\mathcal{D}}_0^\top \Gamma_1 L_{32}
              +\widetilde{\mathcal{D}}_0^\top \Gamma_1 L_{14}^\top \Gamma_1+\widetilde{\mathcal{D}}_0^\top \Gamma_1 L_{14}^\top \Gamma_2\right)\mathbb{E}X\right. \\
              &\left.-\mathcal{D}_0\mathcal{R}_0^{-1}\left(\mathcal{B}_0^\top+\widetilde{\mathcal{D}}_0^\top \Gamma_1 L_{14}^\top\right) \mathbb{E}\Phi
               -\mathcal{D}_0\mathcal{R}_0^{-1}\left(\mathcal{D}_0^\top \Gamma_1 f_2
               +\widetilde{\mathcal{D}}_0^\top\Gamma_1 f_3\right)  \right\}dW_0, \\
\check{X}(0)= &\ \Xi,
\end{aligned}
\right.
\end{equation}
$\mathbb{E}{X}(\cdot)\in\mathbb{R}^3$ satisfies the following ODE:
\begin{equation}\label{E X}
\left\{
\begin{aligned}
d \mathbb{E}X= & \left\{\left[L_{11}+L_{12}-\left(L_{13}+L_{14}\Gamma_1L_{14}^\top\right)\left(\Gamma_1+\Gamma_2\right)-L_{14}\Gamma_1\left(L_{31}+L_{32}\right)\right.\right.\\
              &-\left(\mathcal{B}_0-L_{14}\Gamma_1\widetilde{\mathcal{D}}_0\right)\mathcal{R}_0^{-1}\left(\mathcal{B}_0^\top \Gamma_1+\mathcal{B}_0^\top \Gamma_2
               +\mathcal{D}_0^\top \Gamma_1 L_{21}+\widetilde{\mathcal{D}}^\top_0 \Gamma_1 L_{31} \right. \\
              &\left.\left.+\widetilde{\mathcal{D}}_0^\top \Gamma_1 L_{32}
              +\widetilde{\mathcal{D}}_0^\top \Gamma_1 L_{14}^\top \Gamma_1+\widetilde{\mathcal{D}}_0^\top \Gamma_1 L_{14}^\top \Gamma_2\right)\right]\mathbb{E}X \\
              & \left.-\left[L_{13}+L_{14}\Gamma_1L_{14}^\top+\left(\mathcal{B}_0-L_{14}\Gamma_1\widetilde{\mathcal{D}}_0\right)\mathcal{R}_0^{-1}\left(\mathcal{B}_0^\top
               +\widetilde{\mathcal{D}}_0^\top \Gamma_1 L_{14}^\top\right)\right]\mathbb{E}\Phi\right. \\
              &\left.+f_1-L_{14}\Gamma_1f_3-\left(\mathcal{B}_0-L_{14}\Gamma_1\widetilde{\mathcal{D}}_0\right)\mathcal{R}_0^{-1}\left(\mathcal{D}_0^\top \Gamma_1 f_2
               +\widetilde{\mathcal{D}}_0^\top\Gamma_1 f_3\right)\right\} d t, \\
\mathbb{E}X(0)= &\ \Xi,
\end{aligned}
\right.
\end{equation}
and $\mathbb{E}\Phi(\cdot)\in\mathbb{R}^3$ satisfies the BODE (\ref{equation of E Phi}), respectively.
\end{thm}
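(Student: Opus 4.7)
The plan is to treat this theorem as a verification/assembly result: most of the heavy lifting has already been carried out in the subsection above, where the ansatz $Y = -\Gamma_1 X - \Gamma_2 \mathbb{E}X - \Phi$ was substituted into the Hamiltonian system (\ref{dimension expansion}), It\^o's formula was applied, and matching of the drift and diffusion coefficients produced the Riccati-type equations (\ref{Gamma1 equation}), (\ref{Gamma2 equation}) together with the BSDE (\ref{equation of check Phi}) whose reduction under the mean-field BSDE theory of Li--Sun--Xiong gives the BODE (\ref{equation of E Phi}). Under Assumptions \ref{A2.1}--\ref{Assumption FBODE} and \ref{A3.2}, plus the postulated solvability of (\ref{Gamma2 equation}), all four objects $\Gamma_1,\Gamma_2,\mathbb{E}\Phi$ and $\check{\Phi}$ exist and are unique. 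The remaining task is therefore to repackage the open-loop strategy (\ref{leader open loop optimal strategy}) into state feedback and to verify the SDE/ODE satisfied by $\check{X}$ and $\mathbb{E}X$.

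First I would derive (\ref{leader new feedback strategy}) by inserting the three relations in (\ref{check Y Z}) into the open-loop formula $u_0^*=R_0^{-1}(B_0\check{y}_0+D_0\check{z}_0+\widetilde{D}_0\check{\widetilde{z}}_0)$ written in block form as $u_0^*=R_0^{-1}(\mathcal{B}_0^\top\check{Y}+\mathcal{D}_0^\top\check{Z}_0+\widetilde{\mathcal{D}}_0^\top\check{\widetilde{Z}})$, collecting the terms that multiply $\check{X}$, $\mathbb{E}X$, $\check{\Phi}$, $\mathbb{E}\Phi$, and the inhomogeneous parts $f_2, f_3$. Assumption \ref{A3.2} then permits inversion of $\mathcal{R}_0 = R_0 + \mathcal{D}_0^\top\Gamma_1\mathcal{D}_0 + \widetilde{\mathcal{D}}_0^\top\Gamma_1\widetilde{\mathcal{D}}_0$ to produce exactly (\ref{leader feedback strategy}); taking $\mathbb{E}[\cdot]$ gives (\ref{leader E feedback strategy}). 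Note that the $\check{\psi}_0$ term and the explicit $\check{\Phi}$-term that appeared in (\ref{leader feedback strategy}) disappear in the statement (\ref{leader new feedback strategy}) because the BSDE (\ref{equation of check Phi}) degenerates to the deterministic BODE (\ref{equation of E Phi}), forcing $\psi_0\equiv 0$ and $\check{\Phi}\equiv\mathbb{E}\Phi$; this observation must be stated explicitly.

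Next I would derive the closed-loop dynamics (\ref{check X}) for $\check{X}$. Apply $\mathbb{E}[\,\cdot\,|\mathcal{F}_t^{W_0}]$ to the first equation of (\ref{dimension expansion}); using that $W$ is independent of $\mathcal{F}_t^{W_0}$ and that the martingale part driven by $W$ drops out in conditional expectation, the $W$-stochastic integral collapses into the drift of $\check{X}$ through the $L_{14}\Gamma_1 L_{14}^\top \mathbb{E}\Phi$ type terms that arise when one uses $\mathbb{E}[L_{31}^\top\Gamma_1 f_3\,\text{-type}]$ corrections via the third line of (\ref{check Y Z}). Substituting the feedback $\check{u}_0^* = \mathbb{E}[u_0^*|\mathcal{F}_t^{W_0}]$ (read off from (\ref{leader new feedback strategy})) and separately $\mathbb{E}u_0^*$ from (\ref{leader E feedback strategy}) yields (\ref{check X}). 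Taking full expectation of (\ref{check X}) (which coincides with taking expectation of the first line of (\ref{dimension expansion})) produces (\ref{E X}), completing the system.

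The main obstacle is purely bookkeeping: keeping straight the four levels of conditioning --- pathwise $X$, conditional $\check{X}=\mathbb{E}[X|\mathcal{F}_t^{W_0}]$, unconditional $\mathbb{E}X$, and the analogous objects for $Y,Z_0,\widetilde{Z},\Phi$ --- while expanding the $3\times 3$ block products $L_{14}\Gamma_1 L_{14}^\top(\Gamma_1+\Gamma_2)$ and $(\Gamma_1+\Gamma_2)L_{13}$ that couple the $\eta$-equation to $\mathbb{E}g$ and $\mathbb{E}\widetilde h$. One must check that the coefficient of each of $\check{X}$, $\mathbb{E}X$, $\mathbb{E}\Phi$, $f_1,f_2,f_3,f_4$ appearing in the drift obtained by computing $dY$ from the ansatz matches exactly the drift prescribed by the second line of (\ref{dimension expansion}); this matching is precisely what enforces (\ref{Gamma1 equation})--(\ref{equation of E Phi}), and has essentially been done symbol-by-symbol in the text preceding the theorem. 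Finally, the sufficiency (i.e.\ that (\ref{leader new feedback strategy}) is indeed optimal and not merely a candidate) follows from the strict convexity of $J_0$ in $u_0$ under Assumption \ref{A2.2}, so the unique critical point of the stochastic maximum principle is the global minimizer.
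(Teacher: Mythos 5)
Your proposal is correct and follows essentially the same route as the paper's own (very brief) proof: obtain (\ref{leader new feedback strategy}) from (\ref{leader feedback strategy}) using $\check{\Phi}\equiv\mathbb{E}\Phi$ (hence $\check{\psi}_0\equiv 0$) from the degeneration of (\ref{equation of check Phi}) to (\ref{equation of E Phi}), then take $\mathbb{E}[\,\cdot\,|\mathcal{F}_t^{W_0}]$ of the first equation of (\ref{dimension expansion}) together with the third equality of (\ref{check Y Z}) to get (\ref{check X}), and take full expectation to get (\ref{E X}). Your added remarks on the vanishing of the $W$-driven martingale part under conditioning and on sufficiency via strict convexity are consistent with, and slightly more explicit than, the paper's argument.
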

\begin{proof}
(\ref{leader new feedback strategy}) can be obtained by (\ref{leader feedback strategy}) with $\check{\Phi}(t)\equiv\mathbb{E}\Phi(t),t\in[0,T]$. (\ref{check X}) can be achieved from the first equation of $X$ in (\ref{dimension expansion}), noting (\ref{Phi equation}), the third equality in (\ref{check Y Z}) and (\ref{leader new feedback strategy}). And (\ref{E X}) can be got by (\ref{check X}) directly. The proof is complete.
\end{proof}

Similar to the subsection 3.2, in brief, we discuss the well-posedness of (\ref{dimension expansion}), which is equivalent to (\ref{leader Hamiltonian system}). For simplicity of notation, by (\ref{leader new feedback strategy}) we denote
\begin{equation}\label{leader new feedback strategy simple}
u_0^*=\theta_1 \check{X}+\theta_2 \mathbb{E} X +\theta_3 \mathbb{E}\Phi+\theta_4,\quad \mathbb{P}\text{-a.s.},\ \text{a.e.}\ t\in[0,T],
\end{equation}
where
$$
\left\{\begin{aligned}
\theta_1:=&-\mathcal{R}_0^{-1}\left(\mathcal{B}_0^\top \Gamma_1+\mathcal{D}_0^\top \Gamma_1 L_{21}+\widetilde{\mathcal{D}}^\top_0 \Gamma_1 L_{31}\right),\\
\theta_2:=&-\mathcal{R}_0^{-1}\left(\mathcal{B}_0^\top \Gamma_2+\widetilde{\mathcal{D}}_0^\top \Gamma_1 L_{32}+\widetilde{\mathcal{D}}_0^\top \Gamma_1 L_{14}^\top \Gamma_1+\widetilde{\mathcal{D}}_0^\top \Gamma_1 L_{14}^\top \Gamma_2\right),\\
\theta_3:=&-\mathcal{R}_0^{-1}\left(\mathcal{B}_0^\top+\widetilde{\mathcal{D}}_0^\top \Gamma_1 L_{14}^\top\right),\\
\theta_4:=&-\mathcal{R}_0^{-1}\left(\mathcal{D}_0^\top \Gamma_1 f_2+\widetilde{\mathcal{D}}_0^\top\Gamma_1 f_3\right).
\end{aligned}\right.
$$

Noticing (\ref{check Y Z}) and (\ref{leader new feedback strategy}), the first equation of (\ref{dimension expansion}) becomes
\begin{equation}\label{X explicit}
\left\{\begin{aligned}
	d X= & \left\{L_{11} X+\left[L_{12} -L_{13}\left(\Gamma_1+\Gamma_2)-L_{14}(\Gamma_1L_{31}+\Gamma_1L_{32}+\Gamma_1L_{14}^\top\Gamma_1+\Gamma_1L_{14}^\top\Gamma_2\right)\right]\mathbb{E} X\right. \\
	&-\left(L_{13}+L_{14}\Gamma_1L_{14}^\top\right) \mathbb{E} \Phi+L_{14}\Gamma_1\widetilde{\mathcal{D}}_0\left[\left(\theta_1 +\theta_2\right) \mathbb{E} X +\theta_3 \mathbb{E}\Phi+\theta_4\right] \\
	&+\mathcal{B}_0 \left(\theta_1 \check{X}+\theta_2 \mathbb{E} X +\theta_3 \mathbb{E}\Phi+\theta_4\right)+f_1\Big\} d t \\
	& +\left[L_{21} X+\mathcal{D}_0 \left(\theta_1 \check{X}+\theta_2 \mathbb{E} X +\theta_3 \mathbb{E}\Phi+\theta_4\right)+f_2\right] d W_0 \\
	&+\left[L_{31} X +\left(L_{32}+L_{14}^\top\Gamma_1+L_{14}^\top\Gamma_2\right) \mathbb{E} X+\widetilde{\mathcal{D}}_0 \left(\theta_1 \check{X}+\theta_2 \mathbb{E} X +\theta_3 \mathbb{E}\Phi+\theta_4\right)+f_3\right] d W ,\\
	X(0)= &\ \Xi.
\end{aligned}\right.
\end{equation}
Similar as subsection 3.2, we can obtain the solvability of (\ref{X explicit}). And then, we can obtain the solvability of the second equation of (\ref{dimension expansion}). The details of the proof are left to the interested readers.

\section{$\varepsilon$-Stackelberg-Nash equilibria analysis}

In Section 3, we derived the decentralized Stackelberg-Nash equilibrium, denoted as $\left(u_0^*(\cdot),u^*(\cdot)\right.$ $\left.\equiv\left(u^*_1(\cdot), u^*_2(\cdot), \ldots, u^*_N(\cdot)\right)\right)$, for the limiting Problem \ref{problem decentralized}. In this section, we will prove that the decentralized strategies $\left(u_0^*(\cdot),u^*(\cdot)\right)$ satisfy the $\varepsilon$-Stackelberg-Nash equilibrium property, for Problem \ref{problem centralized}. To begin with, let's define what an $\varepsilon$-Stackelberg-Nash equilibrium entails.

\newtheorem{definition}{Definition}[section]
\begin{definition}\label{varipsilon Stackelberg-Nash equalibrium}
A strategy $u^*(\cdot)\equiv\left(u_0^*(\cdot), u_1^*(\cdot), \cdots, u_N^*(\cdot)\right)$ is said to be an $\varepsilon$-Stackelberg-Nash equilibrium for Problem \ref{problem centralized}, if there exists a non-negative value $\varepsilon = \varepsilon(N)$ such that $\lim_{N \rightarrow \infty} \varepsilon(N) = 0$, which satisfies:

(i) for given $u_0(\cdot) \in \mathcal{U}_{0}^{l,d}$, for $i=1, \cdots, N$, the control stategy $u_i^*(\cdot)$ is a mapping $u_i^*:\mathcal{U}_0^{l,d}\rightarrow\mathcal{U}_i^{f,d}$ satisfying
\begin{equation*}
\mathcal{J}_i\left(u^*_i[u_0](\cdot), u^*_{-i}[u_0](\cdot)\right) \leq  \mathcal{J}_i\left(u_i(\cdot), u^*_{-i}[u_0](\cdot)\right)+\varepsilon, \text{ for any $u_i(\cdot)\in \mathcal{U}_i^{f,d}$};
\end{equation*}

(ii) the control strategy  $u^*_0(\cdot)$ of the leader $\mathcal{A}_0$ satisfies,
\begin{equation*}
\mathcal{J}_0\left(u^*_0(\cdot)\right)\leq  \mathcal{J}_0\left(u_0(\cdot )\right)+\varepsilon,\text{ for any $u_0(\cdot) \in \mathcal{U}_{0}^{l,d}$}.
\end{equation*}
\end{definition}

The main result of this section is the following theorem.

\begin{thm}\label{thm4.1}
Let Assumptions \ref{A2.1}, \ref{A2.2}, \ref{A3.1} and   \ref{Assumption FBODE} hold, then the set of strategies $(u_0^*(\cdot), u_1^*(\cdot),\\ u_2^*(\cdot), \ldots, u_N^*(\cdot)$ constitutes an $\varepsilon$-Stackelberg-Nash equilibrium for Problem \ref{problem centralized}, where
\begin{equation}\label{varipsilon Stackelberg-Nash equalibrium ui* and u0*}
\left\{\begin{aligned}
u_i^*&=R_1^{-1}\left(B_1 \hat{p}_i+D_1 \hat{q}_i+\widetilde{D}_1 \hat{\widetilde{q}}_i\right),\quad i=1,2,\ldots, N, \\
u_0^*&=R_0^{-1}\left(B_0 \check{y}_0+D_0 \check{z}_0+\widetilde{D}_0 \check{\widetilde{z}}_0\right),
\end{aligned}\right.
\end{equation}
with $\left(p_i(\cdot),q_i(\cdot),\widetilde{q}_i(\cdot)\right)$, $\left(y_0(\cdot),z_0(\cdot),\widetilde{z}_0(\cdot)\right)$ satisfying (\ref{follower Hamiltonian system}), (\ref{leader Hamiltonian system}), respectively.
\end{thm}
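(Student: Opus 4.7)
The plan is to establish the $\varepsilon$-Stackelberg-Nash property in two stages, the followers' $\varepsilon$-Nash inequality and the leader's $\varepsilon$-optimality, both resting on quantitative perturbation estimates that compare the centralized dynamics (containing $x^{(N)}$) with the frozen limiting dynamics (containing $z$). The primary quantitative input I would prove first is a rate-of-convergence lemma: when the leader plays $u_0^*$ and every follower plays the decentralized feedback $u_i^*$ from (\ref{follower feedback form simple}), the closed-loop average state satisfies $\sup_{0\le t\le T}\mathbb{E}|x^{*(N)}(t)-z(t)|^2 = O(1/N)$. This follows from the conditional independence of the Brownian motions $W_i$ given $\mathcal{F}_t^W$ together with the fact that the $\bar{x}_i^*$ are conditionally i.i.d.\ given the common noise, turning the convergence (\ref{SLLN}) into a quantitative rate via a standard conditional $L^2$ law of large numbers. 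Coupling this with the Lipschitz and boundedness assumptions of Assumption \ref{A2.1} and Gronwall's inequality then propagates the estimate to $\sup_t\mathbb{E}|x_i^*(t)-\bar{x}_i^*(t)|^2 = O(1/N)$ and $\sup_t\mathbb{E}|x_0^*(t)-\bar{x}_0^*(t)|^2 = O(1/N)$.

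For the followers' $\varepsilon$-Nash part, I would fix $u_0\in\mathcal{U}_0^{l,d}$, freeze $u_{-i}^*[u_0]$, and let $\mathcal{A}_i$ deviate to an arbitrary $u_i\in\mathcal{U}_i^{f,d}$. A single-player deviation changes $x^{(N)}$ only through the single summand $(x_i-x_i^*)/N$, so a Gronwall argument applied to the perturbed system, together with the $L^2$-a-priori bound on $u_i$ (which without loss of generality may be restricted to those controls giving $\mathcal{J}_i(u_i,u_{-i}^*) \le \mathcal{J}_i(u_i^*,u_{-i}^*)+1$), yields $\sup_t\mathbb{E}|x^{(N)}(t)-z(t)|^2 = O(1/N)$ uniformly over admissible perturbations. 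Expanding $\mathcal{J}_i$ and $J_i$ and using Assumption \ref{A2.2} with the Cauchy--Schwarz inequality then gives $|\mathcal{J}_i(u_i,u_{-i}^*) - J_i(u_i)| = O(1/\sqrt{N})$ and analogously for $u_i^*$. Since $u_i^*$ is the exact minimizer of $J_i$ by Theorem \ref{open loop thm1 of follower}, the chain $\mathcal{J}_i(u_i^*,u_{-i}^*) \le J_i(u_i^*)+\varepsilon \le J_i(u_i)+\varepsilon \le \mathcal{J}_i(u_i,u_{-i}^*)+2\varepsilon$ closes this step with $\varepsilon = O(1/\sqrt{N})$.

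For the leader's $\varepsilon$-optimality, I would take an arbitrary $u_0\in\mathcal{U}_0^{l,d}$ and note that, by the Stackelberg convention embedded in Definition \ref{varipsilon Stackelberg-Nash equalibrium}, the followers respond with the decentralized best-response mapping $u_i^*[u_0]$ determined by (\ref{follower feedback form simple}) with $(\mathbb{E}z, \mathbb{E}\bar{x}_0, \mathbb{E}\varphi)$ computed from the FBODE (\ref{FBODE}) driven by $u_0$. The closed-loop centralized system for $(x_0, x_1,\ldots,x_N)$ under this response can then be compared with the limiting system for $(\bar{x}_0, \bar{x}_1,\ldots)$ coupled with $z$, and the same conditional law-of-large-numbers reasoning as above yields $\sup_t\mathbb{E}|x^{(N)}(t)-z(t)|^2 + \sup_t\mathbb{E}|x_0(t)-\bar{x}_0(t)|^2 = O(1/N)$. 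Plugging into (\ref{leader cost}) and (\ref{leader limiting cost}) gives $|\mathcal{J}_0(u_0)-J_0(u_0)| = O(1/\sqrt{N})$, and using the exact optimality of $u_0^*$ for $J_0$ from Theorem \ref{open loop thm1 of leader}, the chain $\mathcal{J}_0(u_0^*) \le J_0(u_0^*)+\varepsilon \le J_0(u_0)+\varepsilon \le \mathcal{J}_0(u_0)+2\varepsilon$ finishes the argument.

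The main obstacle I anticipate is in the leader step: making the constants in the $O(1/N)$ estimates uniform in the admissible perturbation $u_0$. Since $\mathcal{U}_0^{l,d}$ is not bounded, I would circumvent this by restricting attention, without loss of generality, to controls satisfying an a priori cost bound $\mathcal{J}_0(u_0) \le \mathcal{J}_0(u_0^*)+1$, which via the coercivity coming from $R_0 \gg 0$ in Assumption \ref{A2.2} produces a uniform $L^2$-bound on $u_0$; then all the Gronwall constants can be taken uniform. A second delicate point is that the followers' response is an implicit functional of $u_0$ through a coupled FBODE, so controlling the continuity of this mapping in $u_0$ at the $L^2$ level is needed; the linearity of (\ref{FBODE}) and Assumption \ref{Assumption FBODE} together make this a tractable Gronwall-type estimate, but the bookkeeping through the dimension-expanded system (\ref{dimension expansion}) must be handled carefully to track all the coefficient dependencies.
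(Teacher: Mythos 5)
Your proposal follows essentially the same route as the paper: a Gronwall/BDG rate lemma giving $O(1/N)$ for $|x^{*(N)}-z|^2$, $|x_i^*-\bar x_i^*|^2$, $|x_0-\bar x_0|^2$ (the paper's Lemma \ref{CC lemma}), cost-gap estimates $|\mathcal{J}-J|=O(1/\sqrt N)$ via Cauchy--Schwarz, the a priori $L^2$-bound on deviating controls obtained by restricting to those not exceeding the equilibrium cost, and the same two chains of inequalities for the followers and the leader. The only cosmetic difference is that you explicitly flag the uniformity of constants in the leader's perturbation and the dependence of the followers' response on $u_0$ through the FBODE, points the paper handles implicitly; the substance of the argument is identical.
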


To establish this theorem, we shall present several lemmas.

\newtheorem{lemma}{Lemma}[section]
\begin{lemma}\label{CC lemma}
Let Assumptions \ref{A2.1}, \ref{A2.2}, \ref{A3.1} and   \ref{Assumption FBODE} hold, we have the following estimation:
\begin{equation}\label{CC}
\mathbb{E} \left[\sup _{0 \leq t \leq T}\left|{x^*}^{(N)}(t)-z(t)\right|^2\right] = O\left(\frac{1}{N}\right),
\end{equation}
\begin{equation}\label{follower approximation}
\sup _{1 \leq i \leq N} \mathbb{E} \left[\sup _{0 \leq t \leq T}\left|x_i^*(t)-\bar{x}_i^*(t)\right|^2\right] = O\left(\frac{1}{N}\right),
\end{equation}
\begin{equation}\label{leader approximation}
 \mathbb{E} \left[\sup _{0 \leq t \leq T}\left|x_0(t)-\bar{x}_0(t)\right|^2\right] = O\left(\frac{1}{N}\right).
\end{equation}
\end{lemma}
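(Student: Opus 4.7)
The plan is the standard coupling argument for mean-field games with common noise: bound the pathwise differences $x_i^*-\bar{x}_i^*$ and $x_0-\bar{x}_0$ via Gronwall's inequality in terms of the aggregation error $x^{*(N)}-z$, and then control the aggregation error using the conditional i.i.d.\ structure of the limiting states given $\mathcal{F}_t^W$. All three bounds in the lemma are produced together by a single Gronwall loop; the sup-in-time form follows from the Burkholder-Davis-Gundy (BDG) inequality.

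\textbf{Step 1.} Since the strategies $u_i^*(\cdot)$ of (\ref{follower feedback form simple}) are identical in the centralized dynamics (\ref{follower state}) and in the limiting dynamics (\ref{follower explicit state}), subtracting the two SDEs eliminates every term involving $u_i^*$. Setting $\delta_i(\cdot):=x_i^*(\cdot)-\bar{x}_i^*(\cdot)$, the resulting linear SDE has bounded drift and diffusion coefficients (by Assumption \ref{A2.1}) and inhomogeneous forcings $E_1(x^{*(N)}-z)$, $F_1(x^{*(N)}-z)$, $\widetilde F_1(x^{*(N)}-z)$ in the $dt$, $dW_i$ and $dW$ terms respectively. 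BDG combined with Gronwall yields, uniformly in $i$ and $N$,
\begin{equation*}
\mathbb{E}\Bigl[\sup_{0\leq s\leq t}|\delta_i(s)|^2\Bigr]\leq C\int_0^t\mathbb{E}|x^{*(N)}(s)-z(s)|^2\,ds,\quad t\in[0,T].
\end{equation*}

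\textbf{Step 2.} Decompose $x^{*(N)}-z=\frac{1}{N}\sum_{i=1}^N\delta_i+\bigl(\frac{1}{N}\sum_{i=1}^N\bar{x}_i^*-z\bigr)$. The first term satisfies $\mathbb{E}\bigl|\frac{1}{N}\sum_i\delta_i\bigr|^2\leq\sup_i\mathbb{E}|\delta_i|^2$ by Jensen's inequality. For the second, strong uniqueness of (\ref{follower explicit state}) represents every $\bar{x}_i^*$ as a common measurable functional of $(W,W_i,\xi)$; together with (\ref{SLLN}), this makes $\{\bar{x}_i^*(s)\}_{i\geq 1}$ conditionally i.i.d.\ given $\mathcal{F}_s^W$ with conditional mean $z(s)$. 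Standard a priori $L^2$ estimates for (\ref{follower explicit state}) give $\sup_i\mathbb{E}|\bar{x}_i^*(s)|^2<\infty$, hence
\begin{equation*}
\mathbb{E}\Bigl|\frac{1}{N}\sum_{i=1}^N\bar{x}_i^*(s)-z(s)\Bigr|^2=\frac{1}{N^2}\sum_{i=1}^N\mathbb{E}\bigl[\mathrm{Var}(\bar{x}_i^*(s)\mid\mathcal{F}_s^W)\bigr]=O(1/N).
\end{equation*}
Inserting these two bounds into Step 1 gives $\sup_i\mathbb{E}[\sup_{s\leq t}|\delta_i(s)|^2]\leq\frac{CT}{N}+C\int_0^t\sup_i\mathbb{E}|\delta_i(s)|^2\,ds$, and a final application of Gronwall yields (\ref{follower approximation}); plugging (\ref{follower approximation}) back into the decomposition delivers (\ref{CC}).

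\textbf{Step 3.} For the leader, apply the same device to the difference of (\ref{leader state}) and (\ref{leader limiting state}) driven by the common control $u_0^*$: the resulting linear SDE for $x_0-\bar{x}_0$ has forcings proportional to $x^{*(N)}-z$ only (the $u_0^*$ terms cancel), so BDG combined with Gronwall and the bound (\ref{CC}) from Step 2 immediately produces (\ref{leader approximation}).

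The only non-routine ingredient is the conditional i.i.d.\ representation used in Step 2, which rests on strong existence and uniqueness of (\ref{follower explicit state}) together with the fact that the limiting coefficients depend on $i$ only through the independent idiosyncratic Brownian motion $W_i$, while the common-noise forcing $z$ is $\mathcal{F}_\cdot^W$-adapted. Once this structural point is in hand, the three estimates are standard.
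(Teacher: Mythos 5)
Your argument is essentially correct and lands on the same two workhorses as the paper (BDG plus Gronwall, and the i.i.d.\ structure across followers), but it reaches (\ref{CC}) by a genuinely different route. The paper never passes through the decomposition $x^{*(N)}-z=\frac{1}{N}\sum_i(x_i^*-\bar{x}_i^*)+\bigl(\frac{1}{N}\sum_i\bar{x}_i^*-z\bigr)$: instead it subtracts the SDE (\ref{z state}) for $z$ directly from the averaged version of (\ref{follower state}), so that the error equation for $x^{*(N)}-z$ has only two small sources — the control fluctuation $\frac{1}{N}\sum_i u_i^*-\mathbb{E}u_i^*$, which is $O(1/N)$ in $L^2$ because the $u_i^*$ are i.i.d., and the martingale $\frac{1}{N}\sum_i\int(\cdot)\,dW_i$, whose BDG bound carries the factor $\frac{1}{N^2}\sum_i$ from the orthogonality of the $W_i$. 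This proves (\ref{CC}) first and self-contained, after which (\ref{follower approximation}) and (\ref{leader approximation}) follow exactly as in your Steps 1 and 3. Your version instead closes a coupled Gronwall loop and replaces the orthogonal-martingale computation by a conditional-variance identity for $\frac{1}{N}\sum_i\bar{x}_i^*-z$; that is a perfectly legitimate alternative and arguably isolates the probabilistic content (conditional i.i.d.\ given $\mathcal{F}^W$) more cleanly.

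There is one step you need to patch. Your conditional-variance computation,
\begin{equation*}
\mathbb{E}\Bigl|\tfrac{1}{N}\textstyle\sum_{i}\bar{x}_i^*(s)-z(s)\Bigr|^2=\tfrac{1}{N^2}\textstyle\sum_{i}\mathbb{E}\bigl[\mathrm{Var}(\bar{x}_i^*(s)\mid\mathcal{F}_s^W)\bigr]=O(1/N),
\end{equation*}
is pointwise in $s$. That is enough for the Gronwall loop yielding (\ref{follower approximation}), since there the bound only enters under a $ds$-integral. But (\ref{CC}) asserts $\mathbb{E}\bigl[\sup_{0\le t\le T}|x^{*(N)}(t)-z(t)|^2\bigr]=O(1/N)$ with the supremum inside the expectation, and ``plugging (\ref{follower approximation}) back into the decomposition'' only controls $\mathbb{E}\sup_t\bigl|\frac{1}{N}\sum_i\delta_i(t)\bigr|^2$; the term $\mathbb{E}\sup_t\bigl|\frac{1}{N}\sum_i\bar{x}_i^*(t)-z(t)\bigr|^2$ does not follow from the fixed-time variance identity (the process is not a martingale, so no maximal inequality applies directly). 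To finish, write the SDE for $\frac{1}{N}\sum_i\bar{x}_i^*-z$ by averaging (\ref{follower limiting state}) and subtracting (\ref{z state}), and apply BDG to its $\frac{1}{N}\sum_i\int(\cdot)\,dW_i$ part — which is precisely the computation the paper performs for $x^{*(N)}-z$ itself. Once that is added, your proof is complete.
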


\begin{proof}
From (\ref{follower state}) and (\ref{z state}), we obtain
\begin{equation*}
\left\{\begin{aligned}
d\left({x^*}^{(N)}-z\right)  =&\left[\left(A_1+E_1\right)\left(x^{*(N)}-z\right)+B_1\left(\frac{1}{N} \sum_{i=1}^N u_i^*-\mathbb{E} u_i^*\right)\right] d t \\
& +\frac{1}{N} \sum_{i=1}^N\left[C_1 x_i^*+D_1 u_i^*+F_1 x^{(N)}+\sigma_1\right] d W_i \\
& +\left[\left(\widetilde{C}_1+\widetilde{F}_1\right)\left(x^{*(N)}-z\right)+\widetilde{D}_1\left(\frac{1}{N} \sum_{i=1}^N u_i^*-\mathbb{E} u_i^*\right)\right] d W ,\\
\left({x^*}^{(N)}-z\right)(0) =&\ 0.
\end{aligned}\right.
\end{equation*}
By using BDG's inequality and noting the boundedness condition of the coefficients in Assumption \ref{A2.1}, we have ($K$ is some positive constant and may be different line by line),
\begin{equation*}
\begin{aligned}
& \mathbb{E}\left[\sup _{0\leq t \leq  T}\left|{x^*}^{(N)}(t)-z(t)\right|^2\right]\\
& =  \mathbb{E}\left[ \sup _{0\leq t \leq T}\left|\int_{0}^{t}\left(A_1+E_1\right)\left({x^*}^{(N)}-z\right)+B_1\left(\frac{1}{N}\sum_{i=1}^{N}u_i-\mathbb{E}u_i^*\right)ds \right.\right.\\
&\qquad\ +\int_0^t\frac{1}{N} \sum_{i=1}^N\left[C_1 x_i^*+D_1 u_i^*+F_1 x^{(N)}+\sigma_1\right] d W_i\\
&\qquad\ \left.\left.\left.+\int_0^t\left[\left(\widetilde{C}_1+\widetilde{F}_1\right)\left(x^{*(N)}-z\right)\right.+\widetilde{D}_1\left(\frac{1}{N} \sum_{i=1}^N u_i^*-\mathbb{E} u_i^*\right)\right] d W\right|^2\right]\\
&\leq K\left\{ \mathbb{E}\left[\sup _{0 \leq t \leq T} \left|\int_0^t \left(  {x^*}^{(N)}-z\right) +\left(\frac{1}{N} \sum_{i=1}^N u_i^*-\mathbb{E} u_i^*\right) d s \right|^2 \right]\right.\\
&\qquad\ +\mathbb{E}\int_0^T\frac{1}{N^2} \sum_{i=1}^N\left|C_1 x_i^*+D_1 u_i^*+F_1 {x^*}^{(N)}+\sigma_1\right|^2 d t\\
&\qquad\ \left.+\mathbb{E}\int_0^T\left|\left(\widetilde{C}_1+\widetilde{F}_1\right)\left(x^{*(N)}-z\right)+\widetilde{D}_1\left(\frac{1}{N} \sum_{i=1}^N u_i^*-\mathbb{E} u_i^*\right)\right|^2 dt\right\}\\
&\leq K\left[ \mathbb{E}\int_0^T \left(\sup _{0 \leq t \leq T} \left|{x^*}^{(N)}-z\right|^2 +\left|\frac{1}{N} \sum_{i=1}^N u_i^*-\mathbb{E} u_i^*\right|^2\right) d t\right.\\
&\qquad\ \left.+\mathbb{E}\int_0^T\frac{1}{N^2} \sum_{i=1}^N\left|C_1 x_i^*+D_1 u_i^*+F_1 {x^*}^{(N)}+\sigma_1\right|^2 d t\right].
\end{aligned}
\end{equation*}
From (\ref{follower state}), by applying Gronwall's inequality, one can prove $\mathbb{E}\left[ \sup _{0 \leq t \leq T} \sum_{i=1}^N\left|x^*_i(t)\right|^2\right]=O(N)$, and thus $\mathbb{E}\left[ \sup _{0 \leq t \leq T}\left|\bar{x}_i(t)\right|^2\right] \leq K$. We can also show $\mathbb{E}\left[ \sup _{0<t<T}|z(t)|^2\right] \leq K$ by noticing (\ref{z state}). Then, we have
\begin{equation*}
\begin{aligned}
&\mathbb{E}\int_0^T\frac{1}{N^2} \sum_{i=1}^N\left|C_1 x_i^*+D_1 u_i^*+F_1 {x^*}^{(N)}+\sigma_1\right|^2 d t  \\
&=\mathbb{E}\int_0^T\frac{1}{N^2} \sum_{i=1}^N\left|C_1 x_i^*+D_1 u_i^*+F_1 \left({x^*}^{(N)}-z\right)+F_1 z+\sigma_1\right|^2 d t \\
&\leq\frac{K}{N^2}\mathbb{E}\int_{0}^{T}\sum_{i=1}^N\left|1+\left({x^*}^{(N)}-z\right)\right|^2dt
\leq\frac{K}{N}\mathbb{E}\int_0^T\sum_{i=1}^N\left|1+\left({x^*}^{(N)}-z\right)\right|^2dt.
\end{aligned}
\end{equation*}
Moreover, noticing that $u^*_i$ and $u^*_j$ are i.i.d., we get
\begin{equation*}
	\mathbb{E} \int_0^T \left|\frac{1}{N} \sum_{i=1}^N u_i^*-\mathbb{E} u_i^*\right|^2 d t =\frac{1}{N^2}\sum_{i=1}^N \mathbb{E} \int_0^t \left| u_i^*-\mathbb{E} u_i^*\right|^2 d t
	\leq \frac{K}{N} =O\left(\frac{1}{N}\right).
\end{equation*}
Therefore,
\begin{equation}\label{Gronwall}
 \mathbb{E} \left[\sup _{0\leq t \leq T}\left|{x^*}^{(N)}(t)-z(t)\right|^2\right]\leq K \mathbb{E} \left(\int_0^T \sup _{0 \leq t \leq T}\left|{x^*}^{(N)}(t)-z(t)\right|^2 +1\right) d t+O\left(\frac{1}{N}\right).
\end{equation}
From (\ref{Gronwall}), by applying Gronwall's inequality, (\ref{CC}) holds. Then, by subtracting (\ref{follower limiting state}) from (\ref{follower state}), we have
\begin{equation*}
	\left\{\begin{aligned}
		d\left(x_i^*-\bar{x}_i^*\right)  =&\left[A_1\left(x_i^*-\bar{x}_i^*\right) +E_1\left(x^{*(N)}-z\right)\right] d t+\left[C_1\left(x_i^*-\bar{x}_i^*\right) +F_1\left(x^{*(N)}-z\right)\right] d W_i \\
		& +\left[\widetilde{C}_1\left(x_i^*-\bar{x}_i^*\right) +\widetilde{F}_1\left(x^{*(N)}-z\right)\right] d W \\
		\left(x_i^*-\bar{x}_i^*\right) (0) =&\ 0,\text{ for }i=1,\cdots, N.
	\end{aligned}\right.
\end{equation*}
Then, by using BDG's inequality and noting the boundedness condition of the coefficients in Assumption \ref{A2.1}, we have ($K$ is some positive constant and may be different line by line),
\begin{equation*}
	\begin{aligned}
		& \mathbb{E}\left[\sup _{0\leq t \leq  T}\left|x_i^*-\bar{x}_i^*\right|^2\right]
        =  \mathbb{E}\left[ \sup _{0\leq t \leq T}\left|\int_0^t\left[A_1\left(x_i^*-\bar{x}_i^*\right) +E_1\left(x^{*(N)}-z\right)\right]ds \right.\right.\\
		&\quad \left.\left.+\int_0^t\left[C_1\left(x_i^*-\bar{x}_i^*\right) +F_1\left(x^{*(N)}-z\right)\right] d W_i
         +\int_0^t\left[\widetilde{C}_1\left(x_i^*-\bar{x}_i^*\right) +\widetilde{F}_1\left(x^{*(N)}-z\right)\right] d W\right|^2\right]\\
		&\leq K\left\{ \mathbb{E}\left[\sup _{0 \leq t \leq T} \left|\int_0^t \left[A_1\left(x_i^*-\bar{x}_i^*\right) +E_1\left(x^{*(N)}-z\right)\right] d s \right|^2\right] \right.\\
		&\qquad\quad +\mathbb{E}\left[\sup _{0\leq t \leq  T}\left|\int_{0}^{t}\left[C_1\left(x_i^*-\bar{x}_i^*\right) +F_1\left(x^{*(N)}-z\right)\right] d W_i\right|^2\right]\\
		&\qquad\quad \left.+\mathbb{E}\left[\sup _{0\leq t \leq  T}\left|\int_{0}^{t}\left[\widetilde{C}_1\left(x_i^*-\bar{x}_i^*\right) +\widetilde{F}_1\left(x^{*(N)}-z\right)\right] d W\right|^2\right]\right\}\\
		&\leq K\left[\mathbb{E} \int_0^T\left|A_1\left(x_i^*-\bar{x}_i^*\right) +E_1\left(x^{*(N)}-z\right)\right|^2dt +\mathbb{E} \int_0^T\left|C_1\left(x_i^*-\bar{x}_i^*\right) +F_1\left(x^{*(N)}-z\right)\right|^2 d t\right.\\
		&\qquad\ \left.+\mathbb{E} \int_0^T\left|\widetilde{C}_1\left(x_i^*-\bar{x}_i^*\right) +\widetilde{F}_1\left(x^{*(N)}-z\right)\right|^2 d W\right]\\
		&\leq K\left\{ \int_0^T \mathbb{E}\left[\sup _{0 \leq s \leq t} \left|x_i^*-\bar{x}_i^*\right|^2\right]dt+\int_0^T \mathbb{E}\left[\sup _{0 \leq s \leq t} \left|x^{*(N)}-z\right|^2\right]dt\right\}.
	\end{aligned}
\end{equation*}
Noting (\ref{CC}), we can using Gronwall's inequality to obtain
\begin{equation*}
	\mathbb{E} \left[\sup _{0 \leq t \leq T}\left|x_i^*(t)-\bar{x}_i^*(t)\right|^2\right] = O\left(\frac{1}{N}\right).
\end{equation*}
Due to the arbitrariness of $i$, we have (\ref{follower approximation}). The proof of (\ref{leader approximation}) is similar to (\ref{follower approximation})  and thus we omit the proof. And then, the proof is complete.
\end{proof}

\begin{lemma}\label{left half of follower NE}
Let Assumptions \ref{A2.1}, \ref{A2.2}, \ref{A3.1} and   \ref{Assumption FBODE} hold, we have the following estimations:
\begin{equation}\label{estimate followers cost}
\left|\mathcal{J}_i\left(u_i^*(\cdot), u_{-i}^*(\cdot)\right)-J_i\left(u_i^*(\cdot)\right) \right|=O\left(\frac{1}{\sqrt{N}}\right),
\end{equation}
\begin{equation}\label{estimate leader cost}
\left|\mathcal{J}_0\left(u_0^*(\cdot)\right)-J_0\left(u_0^*(\cdot)\right) \right|=O\left(\frac{1}{\sqrt{N}}\right).
\end{equation}
\end{lemma}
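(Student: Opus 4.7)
The plan is to derive both estimates via the algebraic identity $a^2-b^2=(a-b)(a+b)$, followed by Cauchy--Schwarz together with the $L^2$-convergence rates of Lemma \ref{CC lemma}. Note first that since $\mathcal{J}_i(u_i^*,u_{-i}^*)$ and $J_i(u_i^*)$ share the same control $u_i^*$, and likewise $\mathcal{J}_0(u_0^*)$ and $J_0(u_0^*)$ share the same $u_0^*$, the quadratic control penalty terms $R_1(u_i^*)^2$ and $R_0(u_0^*)^2$ cancel exactly, leaving only tracking-error and terminal state contributions to bound.

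For (\ref{estimate followers cost}), I would set $a_i := x_i^*-\lambda x_0-(1-\lambda)x^{*(N)}$ and $b_i := \bar{x}_i^*-\lambda\bar{x}_0-(1-\lambda)z$, and rewrite
\begin{equation*}
\mathcal{J}_i(u_i^*,u_{-i}^*)-J_i(u_i^*) = \frac{1}{2}\mathbb{E}\int_0^T Q_1(a_i-b_i)(a_i+b_i)\,dt + \frac{G_1}{2}\mathbb{E}\bigl[(x_i^*(T)-\bar{x}_i^*(T))(x_i^*(T)+\bar{x}_i^*(T))\bigr].
\end{equation*}
Since $a_i-b_i=(x_i^*-\bar{x}_i^*)-\lambda(x_0-\bar{x}_0)-(1-\lambda)(x^{*(N)}-z)$, the triangle inequality together with (\ref{CC}), (\ref{follower approximation}) and (\ref{leader approximation}) yields $\|a_i-b_i\|_{L^2(\Omega\times[0,T])}=O(1/\sqrt{N})$ and $\mathbb{E}|x_i^*(T)-\bar{x}_i^*(T)|^2=O(1/N)$, while standard a priori SDE estimates for (\ref{leader state})--(\ref{follower state}) and (\ref{leader limiting state})--(\ref{follower limiting state}) give uniform-in-$N$ $L^2$ bounds on each of $x_i^*, \bar{x}_i^*, x_0, \bar{x}_0, x^{*(N)}, z$, hence on $a_i+b_i$ and $x_i^*(T)+\bar{x}_i^*(T)$. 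Cauchy--Schwarz, together with the boundedness of $Q_1$ and $G_1$ from Assumption \ref{A2.2}, then produces the rate $O(1/\sqrt{N})$. For (\ref{estimate leader cost}), the identical factorisation applied to $(x_0-x^{*(N)})^2-(\bar{x}_0-z)^2$ and to $x_0^2(T)-\bar{x}_0^2(T)$ delivers the analogous bound; no new ingredient beyond Lemma \ref{CC lemma} is required.

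The main technical point I expect to need care is the uniform-in-$N$ $L^2$-boundedness of the centralized processes $x_0$, $\{x_i^*\}_{i=1}^N$ and the empirical average $x^{*(N)}$. Because the follower drift $E_1 x^{(N)}$ and the leader drift $E_0 x^{(N)}$ couple all $N+1$ agents through the empirical average, a Gronwall estimate for a single agent is not immediate; one must close the Gronwall loop across the full population, using that under the feedback representations (\ref{follower new feedback strategy}) and (\ref{leader new feedback strategy}) the driving deterministic quantities $\mathbb{E}z$, $\mathbb{E}\varphi$, $\mathbb{E}X$, $\mathbb{E}\Phi$ are bounded independently of $N$. Once that uniform bound is in place, the rest of the proof is a routine book-keeping of constants through Cauchy--Schwarz.
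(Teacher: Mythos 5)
Your proposal is correct and follows essentially the same route as the paper: the paper expands $a_i^2-b_i^2=(a_i-b_i)^2+2b_i(a_i-b_i)$ rather than $(a_i-b_i)(a_i+b_i)$, but this is the same Cauchy--Schwarz argument fed by the rates of Lemma \ref{CC lemma} and the uniform $L^2$ bounds on the centralized and limiting states (which the paper establishes inside the proof of Lemma \ref{CC lemma} via Gronwall over the whole population, exactly as you anticipate).
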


\begin{proof}
From (\ref{follower cost}) and (\ref{follower limiting cost}), we get
\begin{equation*}
\begin{aligned}
&\left|\mathcal{J}_i\left(u_i^*(\cdot), u_{-i}^*(\cdot)\right)-J_i\left(u_i^*(\cdot)\right)\right| \\
= &\ \frac{1}{2} \mathbb{E}\left\{\int_0^T\left\{ Q_1\left[x_i^*-\left(\lambda x_0+(1-\lambda) {x^*}^{(N)}\right)\right]^2 -Q_1\left[\bar{x}_i^*-\left(\lambda \bar{x}_0+(1-\lambda) z\right)\right]^2\right\} dt\right. \\
&  +G_1\left[x_i^{*2}(T)-\bar{x}_i^{*2}(T)\right]\bigg\} .
\end{aligned}
\end{equation*}
Noting Lemma \ref{CC lemma}, we can obtain
\begin{equation*}
\begin{aligned}
\frac{1}{2}& \mathbb{E}\int_0^T \left\{Q_1\left[x_i^*-\left(\lambda x_0+(1-\lambda) {x^*}^{(N)}\right)\right]^2 -Q_1\left[\bar{x}_i^*-\left(\lambda \bar{x}_0+(1-\lambda) z\right)\right]^2\right\} dt \\
=&\ \frac{1}{2} \mathbb{E}\int_0^T \left\{Q_1\left[x_i^*-\bar{x}_i^* -\lambda \left(x_0-\bar{x}_0\right)-(1-\lambda)\left(x^{*(N)}-z\right)\right]^2 \right.\\
&\qquad +2\left[\bar{x}_i^*-\left(\lambda \bar{x}_0+(1-\lambda) z\right)\right]\left[x_i^*-\bar{x}_i^* -\lambda \left(x_0-\bar{x}_0\right)-(1-\lambda)\left(x^{*(N)}-z\right)\right]\bigg\} dt\\
\leq&\ \frac{1}{2} \mathbb{E}\int_0^T\left\{ K\left(\left|x_i^*-\bar{x}_i^*\right|^2+\left|x_0-\bar{x}_0\right|^2 + \left|x^{*(N)}-z\right|^2\right)
 +\left(\int_0^{T}\left[\bar{x}_i^*-\left(\lambda \bar{x}_0+(1-\lambda) z\right)\right]^2 d t\right)^{\frac{1}{2}} \right.\\
&\qquad \left.\times\left(\int_0^T\left[x_i^*-\bar{x}_i^* -\lambda \left(x_0-\bar{x}_0\right)-(1-\lambda)\left(x^{*(N)}-z\right)\right] ^2 \right) ^{\frac{1}{2}}\right\} d t\\
\end{aligned}
\end{equation*}
\begin{equation*}
\begin{aligned}
\leq&\ K  \mathbb{E}\int_0^T \left(\left|x_i^*-\bar{x}_i^*\right|^2+\left|x_0-\bar{x}_0\right|^2 + \left|x^{*(N)}-z\right|^2\right) d t\\
&+K\mathbb{E}\left[\int_0^T \left|x_i^*-\bar{x}_i^*\right|^2 dt \right]^{\frac{1}{2}}+K\mathbb{E}\left[\int_0^T \left|x_0^*-\bar{x}_0^*\right|^2 dt \right]^{\frac{1}{2}}
 +K\mathbb{E}\left[\int_0^T \left|x^{*(N)}-z\right|^2 dt \right]^{\frac{1}{2}}\\
=&\ O\left(\frac{1}{\sqrt{N}}\right).
\end{aligned}
\end{equation*}
Similarly, the order of the terminal term is also $O\left(\frac{1}{\sqrt{N}}\right)$. Therefore, (\ref{estimate followers cost}) is proved. And then,
\begin{equation*}
\begin{aligned}
&\left|\mathcal{J}_0\left(u_0^*(\cdot)\right)-J_0\left(u_0^*(\cdot)\right) \right| \\
= &\ \frac{1}{2} \mathbb{E}\left\{\int_0^T\left[ Q_0\left(x_0^*- {x^*}^{(N)}\right)^2 -Q_0\left(\bar{x}_0^*- z\right)^2\right] dt
 +G_1\left[x_0^{*2}(T)-\bar{x}_0^{*2}(T)\right]\right\} .
\end{aligned}
\end{equation*}
Using similar method, we can prove (\ref{estimate leader cost}). The proof is complete.
\end{proof}

Now, let's consider the perturbed controls of the followers. Suppose the $i$-th follower $\mathcal{A}_i$ takes the value of $u_i(\cdot)$, while $\mathcal{A}_j(j \neq i)$ still takes $u^*_j(\cdot)$. Then, the centralized state equation for the perturbation of $\mathcal{A}_i$ is
\begin{equation}\label{perturbation x-i}
	\left\{\begin{aligned}
		d x_{-i} =&\left(A_1 x_{-i}+B_1 u_i+E_1 x_-^{(N)}+b_1\right) d t \\
		& +\left(C_1 x_{-i}+D_1 u_i+F_1 x_-^{(N)}+\sigma_1\right) d W_i \\
		& +\left(\widetilde{C}_1 x_{-i}+\widetilde{D}_1 u_i+\widetilde{F}_1 x_-^{(N)}+\widetilde{\sigma}_1\right) d W, \\
		x_{-i}(0) =&\ \xi,
	\end{aligned}\right.
\end{equation}
where $x_-^{(N)}(\cdot) := \frac{1}{N} \left(x_{-i}(\cdot)+\sum_{j=1,j\neq i}^N x_j(\cdot)\right)$. The centralized state equation for the $j$-th follower $\mathcal{A}_j$ is the following equation
\begin{equation}\label{perturbation xj}
	\left\{\begin{aligned}
		d x_j =&\left(A_1 x_j+B_1 u^*_j+E_1 x_-^{(N)}+b_1\right) d t \\
		& +\left(C_1 x_j+D_1 u^*_j+F_1 x_-^{(N)}+\sigma_1\right) d W_j \\
		& +\left(\widetilde{C}_1 x_j+\widetilde{D}_1 u^*_j+\widetilde{F}_1 x_-^{(N)}+\widetilde{\sigma}_1\right) d W, \\
		x_j(0) =&\ \xi.
	\end{aligned}\right.
\end{equation}
The corresponding decentralized states satisfy
\begin{equation}\label{perturbation bar x-i}
	\left\{\begin{aligned}
		d \bar{x}_{-i} =&\left(A_1 \bar{x}_{-i}+B_1 u_i+E_1 z+b_1\right) d t \\
		& +\left(C_1 \bar{x}_{-i}+D_1 u_i+F_1 z+\sigma_1\right) d W_i \\
		& +\left(\widetilde{C}_1 \bar{x}_{-i}+\widetilde{D}_1 u_i+\widetilde{F}_1 z+\widetilde{\sigma}_1\right) d W, \\
		\bar{x}_{-i}(0) =&\ \xi.
	\end{aligned}\right.
\end{equation}
and
\begin{equation}\label{perturbation bar xj}
	\left\{\begin{aligned}
		d \bar{x}_j  =&\left(A_1 \bar{x}_j+B_1 u^*_j+E_1 z+b_1\right) d t \\
		& +\left(C_1 \bar{x}_j+D_1 u^*_j+F_1 z+\sigma_1\right) d W_j \\
		& +\left(\widetilde{C}_1 \bar{x}_j+\widetilde{D}_1 u^*_j+\widetilde{F}_1 z+\widetilde{\sigma}_1\right) d W, \\
		\bar{x}_j(0) =&\ \xi.
	\end{aligned}\right.
\end{equation}
respectively.
And then, for the perturbation of the $i$-th follower $\mathcal{A}_i$, the corresponding centralized state equation of the leader transforms into
\begin{equation}\label{leader state perturbed by follower centralized}
\left\{\begin{aligned}
d x_{0;-i}= & \left(A_0 x_{0;-i}+B_0 u_0+E_1 x_-^{(N)}+b_1\right) d t \\
& +\left(C_0 x_{0;-i}+D_0 u_0+F_1 x_-^{(N)}+\sigma_1\right) d W_0 \\
& +\left(\widetilde{C}_0 x_{0;-i}+\widetilde{D}_0 u_0+\widetilde{F}_1 x_-^{(N)}+\widetilde{\sigma}_1\right) d W ,\\
x_{0;-i}(0)= &\ \xi_0,
\end{aligned}\right.
\end{equation}
together with the decentralized state equation of the leader
\begin{equation}\label{leader state perturbed by follower decentralized}
\left\{\begin{aligned}
d \bar{x}_{0;-i}= & \left(A_0 \bar{x}_{0;-i}+B_0 u_0+E_1 z+b_1\right) d t \\
& +\left(C_0 \bar{x}_{0;-i}+D_0 u_0+F_1 z+\sigma_1\right) d W_0 \\
& +\left(\widetilde{C}_0 \bar{x}_{0;-i}+\widetilde{D}_0 u_0+\widetilde{F}_1 z+\widetilde{\sigma}_1\right) d W ,\\
\bar{x}_{0;-i}(0)= &\ \xi_0,
\end{aligned}\right.
\end{equation}

By the definition (\ref{varipsilon Stackelberg-Nash equalibrium}) of $\varepsilon$-Stackelberg-Nash equilibria, we need to show
$$
\inf _{u_i(\cdot) \in\ \mathcal{U}_{i}^{f,d}} \mathcal{J}_i\left(u_i(\cdot), u^*_{-i}(\cdot)\right) \geq \mathcal{J}_i\left(u^*_i(\cdot), u^*_{-i}(\cdot)\right)-\varepsilon .
$$
Therefore, we only need to consider $u_i(\cdot)$ satisfies $\mathcal{J}_i\left(u_i(\cdot), u^*_{-i}(\cdot)\right) \leq \mathcal{J}_i\left(u^*_i(\cdot), u^*_{-i}(\cdot)\right)$. And then,
$$
\mathbb{E} \int_0^T R_0 u_i^2(t) d t \leq \mathcal{J}_i\left(u_i(\cdot), u^*_{-i}(\cdot)\right) \leq \mathcal{J}_i\left(u^*_i(\cdot), u^*_{-i}(\cdot)\right) \leq J_i\left(u^*_i(\cdot)\right)+O\left(\frac{1}{\sqrt{N}}\right),
$$
i.e.,
$$
\mathbb{E} \int_0^T\left|u_i(t)\right|^2 d t \leq K .
$$

\begin{lemma}\label{follower perturbed CC lemma}
Let Assumptions \ref{A2.1}, \ref{A2.2}, \ref{A3.1} and   \ref{Assumption FBODE} hold, we have the following estimations:
\begin{equation}\label{follower perturbed CC}
\begin{aligned}
& \mathbb{E} \left[\sup _{0 \leq t \leq T}\left|x_-^{(N)}(t)-z(t)\right|^2\right] =O\left(\frac{1}{N}\right), \\
& \sup _{1 \leq i \leq N} \mathbb{E} \left[\sup _{0 \leq t \leq T}\left|x_{-i}(t)-\bar{x}_{-i}(t)\right|^2\right] =O\left(\frac{1}{N}\right), \\
& \mathbb{E} \left[\sup _{0 \leq t \leq T}\left|x_{0;-i}(t)-\bar{x}_{0;-i}(t)\right|^2\right] =O\left(\frac{1}{N}\right).
\end{aligned}
\end{equation}
\end{lemma}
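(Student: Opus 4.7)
The plan is to mirror the three-step argument used for Lemma \ref{CC lemma}, but carefully tracking the fact that only the $i$-th follower deviates from the decentralized optimal strategy $u_i^*$ to an arbitrary admissible $u_i$, while all the remaining $N-1$ followers still use their optimal strategies $u_j^*$ (for $j\neq i$). The decisive observation is that a single-agent deviation perturbs the empirical average $x_-^{(N)}$ only through the $1/N$-weighted term, so its effect is asymptotically negligible.

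First, I would estimate $\mathbb{E}\bigl[\sup_{0\le t\le T}|x_-^{(N)}(t)-z(t)|^2\bigr]$. Writing $x_-^{(N)}-z=(x_-^{(N)}-x^{*(N)})+(x^{*(N)}-z)$, the second term is already $O(1/N)$ by \eqref{CC}. For the first term, note that $x_-^{(N)}-x^{*(N)}=\frac{1}{N}(x_{-i}-x_i^*)+\frac{1}{N}\sum_{j\neq i}(x_j-x_j^*)$, where each $x_j$ (for $j\neq i$) is driven by $u_j^*$ just as $x_j^*$ is, but on the perturbed average $x_-^{(N)}$ instead of $x^{*(N)}$. Taking the difference of \eqref{perturbation xj} and \eqref{follower state}, applying BDG together with Assumption \ref{A2.1}, and using the $L^2$-bound $\mathbb{E}\int_0^T|u_i|^2dt\le K$ already established just above this lemma (so that $\mathbb{E}\sup_{t\le T}|x_{-i}(t)|^2\le K$ by standard SDE estimates), I can derive an integral inequality of the form
\[
\mathbb{E}\Bigl[\sup_{0\le s\le t}\bigl|x_-^{(N)}(s)-z(s)\bigr|^2\Bigr]
\le K\int_0^t \mathbb{E}\Bigl[\sup_{0\le r\le s}\bigl|x_-^{(N)}(r)-z(r)\bigr|^2\Bigr]ds
+O\!\left(\tfrac{1}{N}\right),
\]
where the $O(1/N)$ residual absorbs (a) the single-agent contribution $\frac{1}{N}(x_{-i}-x_i^*)$ (bounded in $L^2$ because both $u_i$ and $u_i^*$ are), (b) the diffusion cross-terms of order $1/N$ from the $N$-sum of independent $dW_j$ integrals as in \eqref{Gronwall}, and (c) the law-of-large-numbers term $\mathbb{E}\int_0^T\bigl|\frac{1}{N}\sum_{j\neq i}u_j^*-\mathbb{E}u_i^*\bigr|^2dt$ handled by i.i.d.-ness of the optimal controls. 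Gronwall's inequality then yields the first estimate.

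Second, to obtain $\sup_i\mathbb{E}[\sup_t|x_{-i}(t)-\bar{x}_{-i}(t)|^2]=O(1/N)$, I would subtract \eqref{perturbation bar x-i} from \eqref{perturbation x-i}. The resulting SDE for $x_{-i}-\bar{x}_{-i}$ contains the same drift/diffusion structure as in the second part of Lemma \ref{CC lemma}, driven only by the (now-controlled) mismatch $x_-^{(N)}-z$; crucially, the perturbed control $u_i$ cancels exactly between the two equations, so no term involving $u_i$ survives. BDG followed by Gronwall, combined with the first step, delivers the desired rate. The bound for $x_{0;-i}-\bar{x}_{0;-i}$ is obtained in exactly the same manner by subtracting \eqref{leader state perturbed by follower decentralized} from \eqref{leader state perturbed by follower centralized}, noting that $u_0$ cancels and only the term $x_-^{(N)}-z$ drives the difference.

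The main obstacle, I expect, is the bookkeeping in the first step: one must verify that isolating the single deviator $i$ out of the empirical average does not spoil the $1/N$ rate, which requires (i) the a priori $L^2$-bound on $u_i$, (ii) standard moment bounds $\mathbb{E}\sup_t|x_{-i}(t)|^2,\,\mathbb{E}\sup_t|x_j(t)|^2\le K$ uniformly in $N$ and $j$, and (iii) careful exploitation of the conditional independence and identical distribution of $\{u_j^*,\bar{x}_j^*\}_{j\neq i}$ given $\mathcal{F}^W$ so that the Marcinkiewicz-type bound $\mathbb{E}\bigl|\frac{1}{N-1}\sum_{j\neq i}u_j^*-\mathbb{E}u_i^*\bigr|^2=O(1/N)$ applies. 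Once these ingredients are in place, the rest is a direct replication of the computations already carried out in the proof of Lemma \ref{CC lemma}.
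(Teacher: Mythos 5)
Your proposal is correct and follows essentially the same route as the paper: the paper likewise reduces the first estimate to a Gronwall inequality in which the deviating control enters only through a $\frac{1}{N^2}(u_i-u_i^*)^2$ term (controlled by the a priori bound $\mathbb{E}\int_0^T|u_i|^2\,dt\le K$), combines it with Lemma \ref{CC lemma} via the triangle inequality through $x^{*(N)}$, and then obtains the last two estimates by standard SDE estimates after the perturbed control cancels in the difference equations. The only cosmetic difference is that the paper states the Gronwall inequality for $x_-^{(N)}-x^{*(N)}$ rather than directly for $x_-^{(N)}-z$, which is immaterial.
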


\begin{proof}
Utilizing a similar approach in the proof of Lemma \ref{CC lemma}, we can obtain
\begin{equation*}
\begin{aligned}
&\mathbb{E} \left[\sup_{0 \leq s \leq t}\left|x_-^{(N)}(s)-x^{*(N)}(s)\right|^2\right] \\
&\leq K \mathbb{E} \int_0^t \left(\sup _{0 \leq r \leq s}\left|x_-^{(N)}(r)-x^{*(N)}(r)\right|^2+\frac{1}{N^2} \left(u_i-u_i^*\right)^2(s)\right) d s +O\left(\frac{1}{N}\right).
\end{aligned}
\end{equation*}
By applying Gronwall's inequality, the first estimate in (\ref{follower perturbed CC}) holds.

Noticing Lemma \ref{CC lemma}, we have
\begin{equation*}
\begin{aligned}
& \mathbb{E} \left[\sup _{0 \leq t \leq T}\left|x_-^{(N)}(t)-z(t)\right|^2\right] =\mathbb{E} \left[\sup _{0 \leq t \leq T}\left|x_-^{(N)}(t)-x^{*(N)}(t)+x^{*(N)}(t)-z(t)\right|^2\right] \\
&\leq 2 \mathbb{E} \left[\sup _{0 \leq t \leq T}\left|x_-^{(N)}(t)-x^{*(N)}(t)\right|^2\right] + 2 \mathbb{E} \left[\sup _{0 \leq t \leq T}\left|x^{*(N)}(t)-z(t)\right|^2\right] =O\left(\frac{1}{N}\right).
\end{aligned}
\end{equation*}
Then, by standard estimates of SDEs, we can obtain the last two estimates in (\ref{follower perturbed CC}). The proof is complete.
\end{proof}

\begin{lemma}\label{right half of follower NE}
Let Assumptions \ref{A2.1}, \ref{A2.2}, \ref{A3.1} and   \ref{Assumption FBODE} hold, we have the following estimations:
\begin{equation}
\left|\mathcal{J}_i\left(u_i(\cdot), u_{-i}^*(\cdot)\right)-J_i\left(u_i(\cdot)\right) \right|=O\left(\frac{1}{\sqrt{N}}\right).
\end{equation}
\end{lemma}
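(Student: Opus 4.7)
The plan is to mirror the proof of (\ref{estimate followers cost}) in Lemma \ref{left half of follower NE}, now using the perturbed dynamics (\ref{perturbation x-i})--(\ref{leader state perturbed by follower decentralized}) in place of the equilibrium dynamics and invoking the perturbed consistency estimates from Lemma \ref{follower perturbed CC lemma} in place of Lemma \ref{CC lemma}. Since the control-quadratic term $R_1 u_i^2(t)$ appears identically in both $\mathcal{J}_i(u_i,u_{-i}^*)$ and $J_i(u_i)$, subtracting yields
$$
\mathcal{J}_i(u_i,u_{-i}^*) - J_i(u_i) = \frac{1}{2}\mathbb{E}\int_0^T Q_1\bigl(A^2 - B^2\bigr)\,dt + \frac{1}{2}G_1\,\mathbb{E}\bigl[x_{-i}^2(T) - \bar{x}_{-i}^2(T)\bigr],
$$
where $A := x_{-i} - \lambda x_{0;-i} - (1-\lambda)x_-^{(N)}$ and $B := \bar{x}_{-i} - \lambda \bar{x}_{0;-i} - (1-\lambda)z$.

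Next, I would factorise $A^2 - B^2 = (A-B)(A+B)$, noting
$$
A - B = (x_{-i} - \bar{x}_{-i}) - \lambda(x_{0;-i} - \bar{x}_{0;-i}) - (1-\lambda)(x_-^{(N)} - z),
$$
and then apply the Cauchy--Schwarz inequality in the form
$$
\left|\mathbb{E}\int_0^T Q_1(A^2-B^2)\,dt\right| \le \|Q_1\|_\infty \left(\mathbb{E}\int_0^T (A-B)^2\,dt\right)^{\!1/2} \left(\mathbb{E}\int_0^T (A+B)^2\,dt\right)^{\!1/2}.
$$
By the elementary inequality $(a+b+c)^2 \le 3(a^2+b^2+c^2)$ and the three estimates of Lemma \ref{follower perturbed CC lemma}, the first factor is $O(1/\sqrt{N})$. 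The terminal boundary term $\mathbb{E}[x_{-i}^2(T)-\bar{x}_{-i}^2(T)]$ is handled identically, using $|x_{-i}(T)-\bar{x}_{-i}(T)|^2 = O(1/N)$ from Lemma \ref{follower perturbed CC lemma} together with a uniform $L^2$ bound on $x_{-i}(T)$ and $\bar{x}_{-i}(T)$.

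The main obstacle is establishing that the ``sum'' factor, i.e.\ $\mathbb{E}\int_0^T (A+B)^2\,dt$, is bounded by a constant independent of $N$ and of the admissible perturbation $u_i$. The key observation (already recorded just before the statement of the lemma) is that one may restrict attention to $u_i(\cdot)$ satisfying $\mathbb{E}\int_0^T |u_i(t)|^2\,dt \le K$, since any $u_i$ violating this cannot be a candidate minimiser. Together with Assumption \ref{A2.1} (bounded coefficients), $u_0 \in \mathcal{U}_0^{l,d}$, and standard SDE moment estimates with Gronwall's inequality applied to (\ref{perturbation bar x-i}), (\ref{leader state perturbed by follower decentralized}), and (\ref{z state}), this yields
$$
\mathbb{E}\!\left[\sup_{0\le t\le T}\!\bigl(|\bar{x}_{-i}(t)|^2 + |\bar{x}_{0;-i}(t)|^2 + |z(t)|^2\bigr)\right] \le K,
$$
and, via the perturbed consistency estimates once more, the same bound with $x_{-i}$, $x_{0;-i}$, $x_-^{(N)}$ in place of their barred counterparts. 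Thus the second factor is uniformly bounded, the two pieces multiply to give $O(1/\sqrt{N})$, and the claim follows.
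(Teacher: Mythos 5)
Your proof is correct and follows essentially the same route as the paper, which simply invokes ``a similar approach as in Lemma \ref{left half of follower NE}'' combined with the perturbed consistency estimates of Lemma \ref{follower perturbed CC lemma}. You in fact supply more detail than the paper does -- in particular, making explicit that the a priori bound $\mathbb{E}\int_0^T|u_i(t)|^2\,dt\le K$ is what delivers the uniform $L^2$ bounds needed for the ``sum'' factor -- but the decomposition, the use of Cauchy--Schwarz, and the reliance on the $O(1/N)$ perturbed estimates are all the same as the paper's intended argument.
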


\begin{proof}
From (\ref{follower cost}) and (\ref{follower limiting cost}), we get
\begin{equation*}
\begin{aligned}
&\left|\mathcal{J}_i\left(u_i(\cdot), u_{-i}^*(\cdot)\right)-J_i\left(u_i(\cdot)\right) \right| \\
= &\ \frac{1}{2} \mathbb{E}\left\{\int_0^T \left\{Q_1\left[x_{-i}-\left(\lambda x_{0;-i}+(1-\lambda) x_-^{(N)}\right)\right]^2 -Q_1\left[\bar{x}_i-\left(\lambda \bar{x}_{0;-i}+(1-\lambda) z\right)\right]^2\right\} dt\right. \\
&\qquad +G_1\left(x_{-i}^{2}(T)-\bar{x}_i^2(T)\right)\bigg\} .
\end{aligned}
\end{equation*}
Utilizing a similar approach in the proof of Lemma \ref{left half of follower NE}, we can complete the proof.
\end{proof}

The following is the proof of first part of Theorem \ref{thm4.1}.

\begin{proof}
Applying Lemma \ref{left half of follower NE} and Lemma \ref{right half of follower NE}, we obtain
\begin{equation*}
\mathcal{J}_i\left(u^*_i(\cdot), u^*_{-i}(\cdot)\right) \leq J_i\left(u^*_i(\cdot)\right)+O\left(\frac{1}{\sqrt{N}}\right) \leq  J_i\left(u_i(\cdot)\right)+O\left(\frac{1}{\sqrt{N}}\right) \leq \mathcal{J}_i\left(u_i(\cdot), u^*_{-i}(\cdot)\right)+O\left(\frac{1}{\sqrt{N}}\right).
\end{equation*}
\end{proof}

Next, let us consider a perturbed control $u_0(\cdot)$ for the leader $\mathcal{A}_0$, the corresponding centralized state equation is
\begin{equation}\label{perturbation m0}
\left\{\begin{aligned}
d x_{-0}= & \left(A_0 x_{-0}+B_0 u_0+E_0 x^{*(N)}+b_0\right) d t \\
& +\left(C_0 x_{-0}+D_0 u_0+F_0 x^{*(N)}+\sigma_0\right) d W_0 \\
& +\left(\widetilde{C}_0 x_{-0}+\widetilde{D}_0 u_0+\widetilde{F}_0 x^{*(N)}+\widetilde{\sigma}_0\right) d W ,\\
x_{-0}(0) =&\ \xi_0.
\end{aligned}\right.
\end{equation}
And the corresponding decentralized state equation is
\begin{equation}\label{perturbation bar m0}
\left\{\begin{aligned}
d \bar{x}_{-0}= & \left(A_0 \bar{x}_{-0}+B_0 u_0+E_0 z+b_0\right) d t \\
& +\left(C_0 \bar{x}_{-0}+D_0 u_0+F_0 z+\sigma_0\right) d W_0 \\
& +\left(\widetilde{C}_0 \bar{x}_{-0}+\widetilde{D}_0 u_0+\widetilde{F}_0 z+\widetilde{\sigma}_0\right) d W ,\\
\bar{x}_{-0}(0) =&\ \xi_0.
\end{aligned}\right.
\end{equation}

By the definition (\ref{varipsilon Stackelberg-Nash equalibrium}) of $\varepsilon$-Stackelberg-Nash equilibria, we need to show
$$
\inf _{u_0(\cdot) \in\ \mathcal{U}_{0}^{l,d}} \mathcal{J}_0\left(u_0(\cdot)\right) \geq \mathcal{J}_0\left(u^*_0(\cdot)\right)-\varepsilon.
$$
Therefore, we only need to consider $u_0(\cdot)$ satisfies $\mathcal{J}_0\left(u_0(\cdot)\right) \leq \mathcal{J}_0\left(u^*_0(\cdot)\right)$.
And then,
$$
\mathbb{E} \int_0^T R_0 {u_0}^2(t) d t \leqslant \mathcal{J}_0\left(u_0(\cdot)\right) \leq \mathcal{J}_0\left(u^*_0(\cdot)\right) \leq J_0\left(u^*_0(\cdot)\right)+O\left(\frac{1}{\sqrt{N}}\right) ,
$$
i.e..
$$
\mathbb{E} \int_0^T\left|u_0(t)\right|^2 d t \leqslant K .
$$

\begin{lemma}\label{leader perturbed CC lemma}
Let Assumptions \ref{A2.1}, \ref{A2.2}, \ref{A3.1} and   \ref{Assumption FBODE} hold, we have the following estimations:
\begin{equation}\label{leader perturbed CC}
\mathbb{E} \left[\sup _{0 \leq t \leq T}\left|x_{-0}(t)-\bar{x}_{-0}(t)\right|^2\right] =O\left(\frac{1}{N}\right) .
\end{equation}
\end{lemma}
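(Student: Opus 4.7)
The plan is to subtract the two SDEs (\ref{perturbation m0}) and (\ref{perturbation bar m0}) directly. Because both systems use the same perturbed leader control $u_0(\cdot)$ and the same leader's Brownian motions, the $B_0 u_0$, $D_0 u_0$, $\widetilde{D}_0 u_0$ and the constant drift/diffusion terms cancel. What remains is
\begin{equation*}
\left\{\begin{aligned}
d(x_{-0}-\bar{x}_{-0}) =&\ \bigl[A_0(x_{-0}-\bar{x}_{-0}) + E_0(x^{*(N)}-z)\bigr]\,dt \\
& + \bigl[C_0(x_{-0}-\bar{x}_{-0}) + F_0(x^{*(N)}-z)\bigr]\,dW_0 \\
& + \bigl[\widetilde{C}_0(x_{-0}-\bar{x}_{-0}) + \widetilde{F}_0(x^{*(N)}-z)\bigr]\,dW, \\
(x_{-0}-\bar{x}_{-0})(0) =&\ 0,
\end{aligned}\right.
\end{equation*}
where $x^{*(N)}(\cdot)=\frac{1}{N}\sum_{i=1}^N x_i^*(\cdot)$ does not involve the leader's perturbed control, since the followers' state equations (\ref{follower state}) do not contain $x_0$.

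Next, I would apply the Burkholder-Davis-Gundy inequality to the stochastic integrals and standard $L^2$ estimates to the Lebesgue integral term. Using the boundedness of $A_0,C_0,\widetilde{C}_0,E_0,F_0,\widetilde{F}_0$ from Assumption \ref{A2.1}, this yields, for some generic constant $K>0$,
\begin{equation*}
\mathbb{E}\Bigl[\sup_{0\le s\le t}|x_{-0}(s)-\bar{x}_{-0}(s)|^2\Bigr] \le K\int_0^t \mathbb{E}\Bigl[\sup_{0\le r\le s}|x_{-0}(r)-\bar{x}_{-0}(r)|^2\Bigr]\,ds + K\,\mathbb{E}\int_0^T |x^{*(N)}(s)-z(s)|^2\,ds.
\end{equation*}

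The second term on the right is controlled by Lemma \ref{CC lemma}, specifically the estimate (\ref{CC}), which gives $\mathbb{E}\int_0^T|x^{*(N)}-z|^2\,ds = O(1/N)$. Applying Gronwall's inequality to the resulting integral inequality then yields $\mathbb{E}\bigl[\sup_{0\le t\le T}|x_{-0}(t)-\bar{x}_{-0}(t)|^2\bigr] = O(1/N)$, completing the proof.

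There is essentially no serious obstacle here; the argument is an almost verbatim repetition of the method used to establish (\ref{leader approximation}) in Lemma \ref{CC lemma}, the only change being that the control $u_0$ is an arbitrary admissible perturbation rather than the optimal $u_0^*$. The $L^2$-boundedness of this perturbation is not even needed, because $u_0$ drops out of the difference equation; only the already-established mean-field concentration estimate (\ref{CC}) is required.
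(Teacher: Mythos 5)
Your proposal is correct and follows essentially the same route as the paper: subtract (\ref{perturbation m0}) and (\ref{perturbation bar m0}) so that the perturbed control $u_0$ cancels, then apply standard SDE estimates (BDG plus Gronwall) together with the bound (\ref{CC}) from Lemma \ref{CC lemma} on $x^{*(N)}-z$. Your additional remarks---that $x^{*(N)}$ is unaffected by the leader's perturbation and that no $L^2$ bound on $u_0$ is needed---are accurate and simply make explicit what the paper leaves implicit.
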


\begin{proof}
First, we get
\begin{equation*}
\left\{\begin{aligned}
d \left(x_{-0}-\bar{x}_{-0}\right)= & \left[A_0\left( x_{-0}-\bar{x}_{-0}\right)+E_0\left( x^{*(N)}-z\right) \right] d t \\
& +\left[C_0\left( x_{-0}-\bar{x}_{-0}\right)+F_0\left( x^{*(N)}-z\right) \right] d W_0 \\
& +\left[\widetilde{C}_0\left( x_{-0}-\bar{x}_{-0}\right)+\widetilde{F}_0\left( x^{*(N)}-z\right) \right] d W ,\\
\left(x_{-0}-\bar{x}_{-0}\right)(0)=&\ 0 .
\end{aligned}\right.
\end{equation*}
Using standard estimate of SDEs and Lemma \ref{CC lemma}, (\ref{leader perturbed CC}) holds.
\end{proof}

\begin{lemma}\label{right half of leader NE}
Let Assumptions \ref{A2.1}, \ref{A2.2}, \ref{A3.1} and   \ref{Assumption FBODE} hold, we have the following estimations:
\begin{equation}
\left|\mathcal{J}_0\left(u_0(\cdot)\right)-J_0\left(u_0(\cdot)\right) \right|=O\left(\frac{1}{\sqrt{N}}\right).
\end{equation}
\end{lemma}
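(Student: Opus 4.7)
The plan is to mirror exactly the two-step scheme used in Lemma \ref{left half of follower NE} and Lemma \ref{right half of follower NE}, since the structural situation for the leader under perturbation is completely analogous. The $R_0 u_0^2$ terms appear identically in $\mathcal{J}_0(u_0(\cdot))$ and $J_0(u_0(\cdot))$ so they cancel in the difference, and the $\lambda$-terms which complicated the followers' analysis are absent from the leader's cost. Hence we are reduced to controlling
$$
\frac{1}{2}\mathbb{E}\int_0^T Q_0 \left[(x_{-0}-x^{*(N)})^2 - (\bar{x}_{-0}-z)^2\right]dt + \frac{G_0}{2}\mathbb{E}\left[x_{-0}^2(T)-\bar{x}_{-0}^2(T)\right].
$$

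The first step is the standard factorisation
$$
(x_{-0}-x^{*(N)})^2 - (\bar{x}_{-0}-z)^2 = \bigl[(x_{-0}-\bar{x}_{-0})-(x^{*(N)}-z)\bigr]\cdot\bigl[(x_{-0}-x^{*(N)})+(\bar{x}_{-0}-z)\bigr].
$$
I would then apply Cauchy-Schwarz in the form used in the proof of Lemma \ref{left half of follower NE}, so that the estimate splits into a product of an $L^2$ factor containing the two ``approximation'' differences and an $L^2$ factor containing the two ``residual'' terms. The first factor is handled by Lemma \ref{leader perturbed CC lemma}, which gives $\mathbb{E}[\sup_{t}|x_{-0}(t)-\bar{x}_{-0}(t)|^2]=O(1/N)$, and by Lemma \ref{CC lemma}, which gives $\mathbb{E}[\sup_{t}|x^{*(N)}(t)-z(t)|^2]=O(1/N)$. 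Both contributions produce a $1/\sqrt{N}$ factor after taking the square root.

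The second step is to verify that the remaining factor is $O(1)$, uniformly in $N$. Using the a priori bound $\mathbb{E}\int_0^T |u_0(t)|^2 dt\le K$ that was already established from the inequality $\mathcal{J}_0(u_0(\cdot))\le\mathcal{J}_0(u_0^*(\cdot))$ just before the lemma, together with Assumption \ref{A2.1} and standard BDG/Gr\"onwall estimates on the linear SDEs (\ref{perturbation bar m0}) and (\ref{z state}), we obtain $\mathbb{E}[\sup_t|\bar{x}_{-0}(t)|^2]+\mathbb{E}[\sup_t|z(t)|^2]\le K$, and a similar bound for $x_{-0}(\cdot)$ and $x^{*(N)}(\cdot)$. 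The terminal term is handled identically: write $x_{-0}^2(T)-\bar{x}_{-0}^2(T) = (x_{-0}(T)-\bar{x}_{-0}(T))(x_{-0}(T)+\bar{x}_{-0}(T))$, apply Cauchy-Schwarz, and invoke Lemma \ref{leader perturbed CC lemma} together with the boundedness of $\mathbb{E}|x_{-0}(T)|^2+\mathbb{E}|\bar{x}_{-0}(T)|^2$.

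There is no genuine obstacle here: the argument is a routine copy of the proof of Lemma \ref{left half of follower NE}, made simpler by the absence of the $\lambda\bar{x}_0$ cross terms that appeared in the follower cost. The only piece of bookkeeping to be careful about is that the \emph{perturbed} leader state $x_{-0}$ is driven by the optimal followers' average $x^{*(N)}$ (not by a perturbed followers' average), so the decomposition cleanly separates into the two differences $x_{-0}-\bar{x}_{-0}$ and $x^{*(N)}-z$ for which estimates are already available. Combining the running-cost and terminal bounds then yields $|\mathcal{J}_0(u_0(\cdot))-J_0(u_0(\cdot))|=O(1/\sqrt{N})$, completing the proof.
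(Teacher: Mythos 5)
Your proposal is correct and follows essentially the same route as the paper: the paper's proof likewise reduces the difference to the $Q_0$-quadratic and terminal terms (the $R_0u_0^2$ terms cancel) and then invokes the method of Lemma \ref{left half of follower NE} together with Lemmas \ref{CC lemma} and \ref{leader perturbed CC lemma}; your difference-of-squares factorisation plus Cauchy--Schwarz is just a cosmetic variant of the paper's expansion $a^2-b^2=(a-b)^2+2b(a-b)$. As a minor point, you correctly use $G_0$ for the terminal weight where the paper's displayed identity has a typo ($G_1$).
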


\begin{proof}
Since
\begin{equation*}
\begin{aligned}
&\left|\mathcal{J}_0\left(u_0(\cdot)\right)-J_0\left(u_0(\cdot)\right) \right| \\
= &\ \frac{1}{2} \mathbb{E}\left\{\int_0^T \left[ Q_0\left(x_{-0}- {x^*}^{(N)}\right)^2 -Q_0\left(\bar{x}_{-0}- z\right)^2\right] dt
 +G_1\left[x_{-0}^2(T)-\bar{x}_{-0}^2(T)\right]\right\},
\end{aligned}
\end{equation*}
using the same method as in the proof of Lemma \ref{left half of follower NE} and noticing Lemma \ref{CC lemma} and Lemma \ref{leader perturbed CC lemma}, we can prove (\ref{right half of leader NE}).
\end{proof}

The proof of the second part of Theorem \ref{thm4.1}.

\begin{proof}
Applying Lemma \ref{left half of follower NE} and Lemma \ref{right half of leader NE}, we obtain
\begin{equation*}
\mathcal{J}_0\left(u^*_0(\cdot)\right) \leq J_0\left(u^*_0(\cdot)\right)+O\left(\frac{1}{\sqrt{N}}\right) \leq  J_0\left(u_0(\cdot)\right)+O\left(\frac{1}{\sqrt{N}}\right) \leq \mathcal{J}_0\left(u_0(\cdot)\right)+O\left(\frac{1}{\sqrt{N}}\right).
\end{equation*}
\end{proof}

\section{Numerical examples}

In this section, we give an example with certain particular coefficients to demonstrate the effectiveness of our theoretical results. The simulation parameters are given as follows: $A_0=0.1,A_1=-2,B_0=B_1=5,E_0=\lambda=0.5,b_0=C_0=D_0=F_0=\sigma_0=\tilde{C}_0=\tilde{D}_0=\tilde{f}_0=\tilde{\sigma}_0=E_1=b_1=C_1=D_1=F_1=\sigma_1=\tilde{C}_1=\tilde{D}_1=\tilde{F}_1=\tilde{\sigma}_1=Q_0=R_0=Q_1=R_1=1,G_0=0.05,G_1=0.3,\xi=\xi_0=0.5,T=1$.
By the  Euler's method, we plot the solution curves of Riccati equations (\ref{RE P1}), (\ref{RE P2}), (\ref{Gamma1 equation}) and (\ref{Gamma2 equation})
in Figures \ref{fig:P} and \ref{fig:Gamma}. In detail, the curves of $P_1,P_2$ are one-dimensional functions, respectively. But the curves of $\Gamma_1,\Gamma_2$ are two $3 \times 3$ symmetric matrix-valued functions, where $\Gamma_k^{ij}$ denotes the element of the ith row and the jth column of
$\Gamma_k^{ij}$, $k=1,2$. When the number of players is 300, the curves of state average $x^{N}$ and frozen limit term are in Figure \ref{fig:x^N}. $\epsilon=(\mathbb{E} \int_0^T |z(t)-x^{*(N)}(t)|^{2})^{\frac{1}{2}}$
represents the performance of the decentralized strategies. The figure of $\epsilon$ with respect to the number of players is shown in Figure \ref{fig:epsilon}, which verifies the consistency of mean field approximation.

\begin{figure}[htbp] 
	\centering 
	\includegraphics[width=0.8\textwidth]{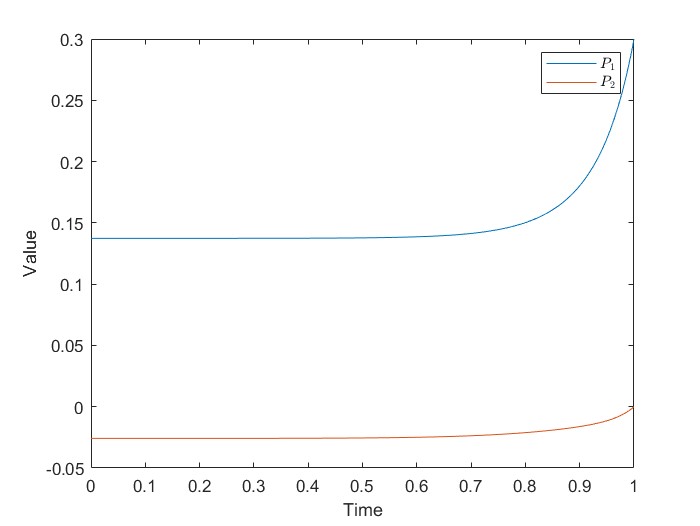} 
	\caption{The solution curve of $P_1,P_2$} 
	\label{fig:P} 
\end{figure}
\begin{figure}[htbp]
	\centering
	\begin{subfigure}[htbp]{0.8\textwidth}
		\includegraphics[width=\textwidth]{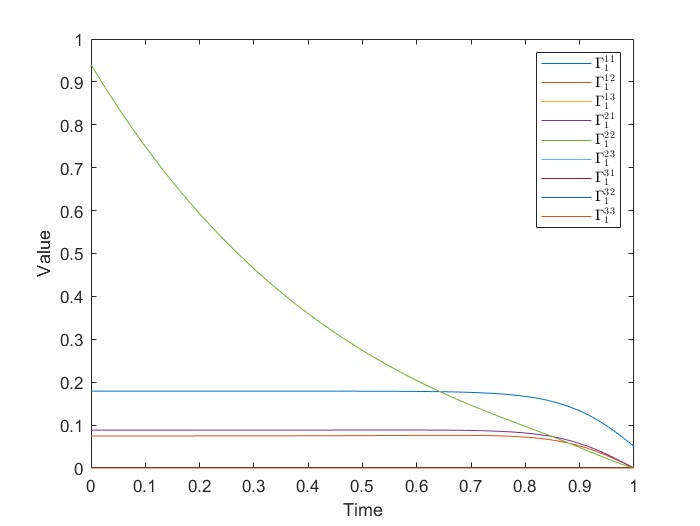}
		\caption{The solution curve of $\Gamma_1$}
		\label{fig:Gamma_1}
	\end{subfigure}
	\hfill 
	\begin{subfigure}[b]{0.8\textwidth}
		\includegraphics[width=\textwidth]{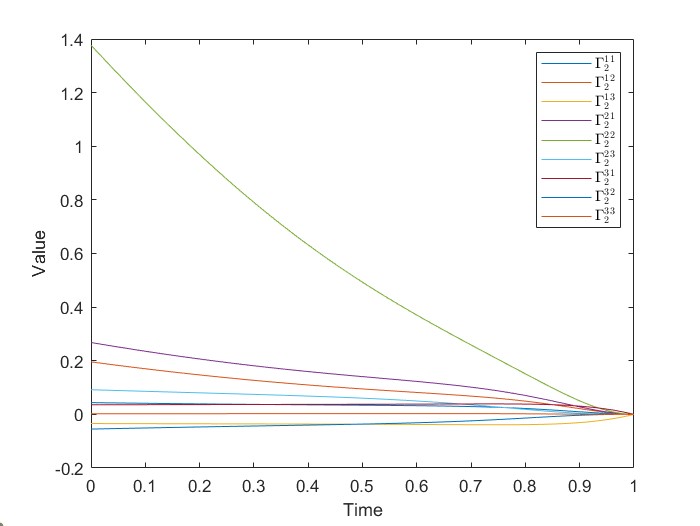}
		\caption{The solution curve of $\Gamma_2$}
		\label{fig:Gamma_2}
	\end{subfigure}
	\caption{}
	\label{fig:Gamma}
\end{figure}

\begin{figure}[htbp] 
	\centering 
	\includegraphics[width=0.8\textwidth]{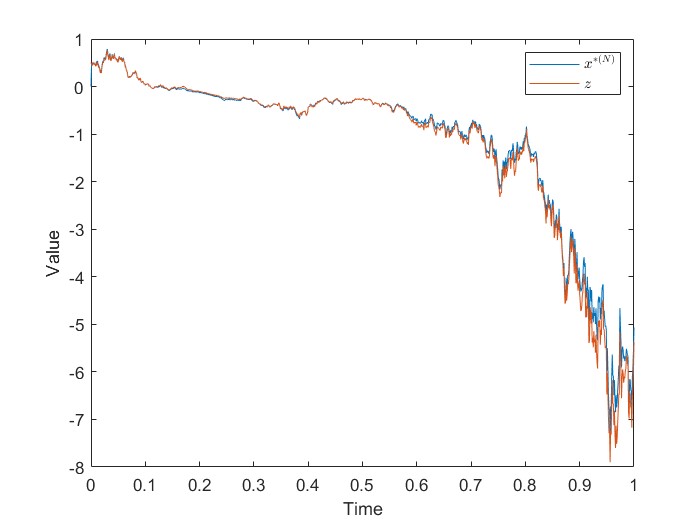} 
	\caption{The solution curve of $x^{(N)}$, $z$} 
	\label{fig:x^N} 
\end{figure}

\begin{figure}[htbp] 
	\centering 
	\includegraphics[width=0.8\textwidth]{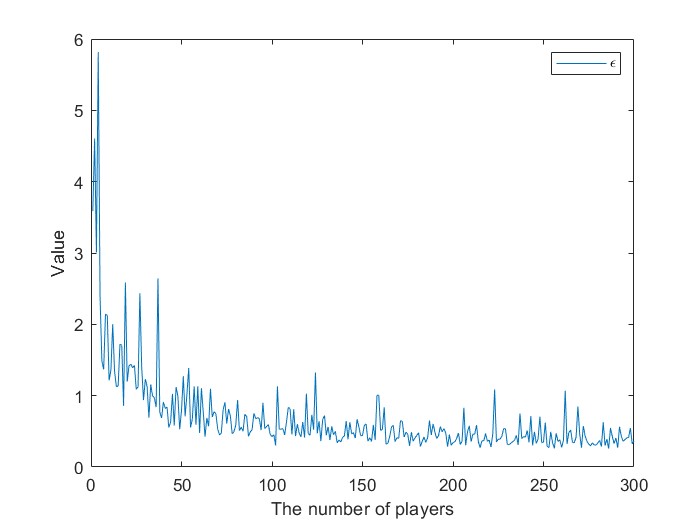} 
	\caption{The solution curve of $\epsilon$} 
	\label{fig:epsilon} 
\end{figure}

\section{Conclusion}

In this paper, we have considered a linear-quadratic mean field Stackelberg stochastic differential game with partial information and common noise. The state equations of the single leader and multiple followers are general. Decentralized strategies are given by the stochastic maximum principle with partial information and optimal filter technique, and the feedback form of decentralized strategies have been obtained by using some high-dimensional Riccati equations. Finally, the decentralized control strategy obtained has been verified as an $\varepsilon$-Stackelberg-Nash equilibrium of the original game (Problem \ref{problem centralized}).

The general solvability of the high-dimensional Riccati equation (\ref{Gamma2 equation}) is rather challenging. Problems with partial observation are an interesting but difficult ones. We will consider these topics in our future research.


\begin{thebibliography}{0}

\bibitem{Baghery-Oksendal-2007}F. Baghery and B. \O ksendal, A maximum principle for stochastic control with partial information. {\it Stoch. Anal. Appl.}, 25, 705-717, 2007.

\bibitem{Bardi-Priuli-2014}M. Bardi and F. Priuli, Linear-quadratic $N$-person and mean-field games with ergodic cost. {\it SIAM J. Control Optim.}, 52, 3022-3052, 2014.

\bibitem{Bensoussan-Feng-Huang-2021}A. Bensoussan, X. Feng and J. Huang, Linear-Quadratic-Gaussian mean-field-game with partial observation and common noise. {\it Math. Control Relat. Fields}, 11, 23-46, 2021.

\bibitem{Bensoussan-Frehse-Yam-2013}A. Bensoussan, J. Frehse and P. Yam, {\it Mean Field Games and Mean Field Type Control Theory}, Springer Briefs in Mathematics, Springer, New York, 2013.

\bibitem{Buckdahn-Djehiche-Li-Peng-2009}R. Buckdahn, B. Djehiche, J. Li and S. Peng, Mean field backward stochastic differential equations: A limit approach. {\it Ann. Probab.}, 37, 1524-1565, 2009.

\bibitem{Carmona-Delarue-2013}R. Carmona and F. Delarue, Probabilistic analysis of mean field games. {\it SIAM J. Control Optim.}, 51, 2705-2734, 2013.

\bibitem{Cong-Shi-2024}W. Cong and J. Shi, Direct approach of linear-quadratic Stackelberg mean field games of backward-forward stochastic systems. {\it arXiv:2401.15835}.

\bibitem{Du-Huang-Wu-2018}K. Du, J. Huang and Z. Wu, Linear-quadratic mean-field-game of backward stochastic differential systems. {\it Math. Control Relat. Fields}, 8, 653-678, 2018.

\bibitem{Feng-Wang-2024}X. Feng and L. Wang, Stackelberg equilibrium with social optima in Linear-Quadratic-Gaussian mean-field system. {\it Math. Control Relat. Fields}, 14, 769-799, 2024.

\bibitem{Hu-Huang-Li-2018}Y. Hu, J. Huang and X. Li, Linear quadratic mean field game with control input constraint. {\it ESAIM Control Optim. Calc. Var.}, 24, 901-919, 2018.

\bibitem{Hu-Huang-Nie-2018}Y. Hu, J. Huang and T. Nie, Linear-Quadratic-Gaussian mixed mean-field games with heterogeneous input constraints. {\it SIAM J. Control Optim.}, 56, 2835-2877, 2018.

\bibitem{Huang-Huang-2017}J. Huang and M. Huang, Robust mean field Linear-Quadratic-Gaussian games with unknown $L^2$-distuibance. {\it SIAM J. Control Optim.}, 55, 2811-2840, 2017.

\bibitem{Huang-Wang-2016}J. Huang and S. Wang, Dynamic optimization of large-population systems with partial information. {\it J. Optim. Theory Appl.}, 168, 231-245, 2016.

\bibitem{Huang-Wang-Wu-2016}J. Huang, S. Wang and Z. Wu, Backward mean-field linear-quadratic-gaussian (LQG) games: Full and partial information. {\it IEEE Trans. Automat. Control}, 61, 3784-3796, 2016.

\bibitem{Huang-2010}M. Huang, Large-population LQG games involving a major player: the Nash certainty equivalence principle. {\it SIAM J. Control Optim.}, 48, 3318-3353, 2010.

\bibitem{Huang-Caines-Malhame-2007}M. Huang, P. Caines and R. Malham\'e, Large-population cost-coupled LQG problems with nonuniform agents: Individual-mass behavior and decentralized $\varepsilon$-Nash equilibria. {\it IEEE Trans. Automat. Control}, 52,  1560-1571, 2007.

\bibitem{Huang-Malhame-Caines-2006}M. Huang, R. Malham\'e and P. Caines, Large population stochastic dynamic games: closed-loop McKean-Vlasov systems and the Nash certainty equivalence principle. {\it Commun. Inf. Syst.}, 6, 221-251, 2006.

\bibitem{Huang-Wang-Wang-Xiao-2024}P. Huang, G. Wang, S. Wang and H. Xiao,  A mean-field game for a forward-backward stochastic system with partial observation and common noise. {\it IEEE/CAA J. Autom. Sinica}, 11, 746-759, 2024.

\bibitem{Huang-Wang-Wang-Wang-2023}P. Huang, G. Wang, W. Wang and Y. Wang,  A linear-quadratic mean-field game of backward stochastic differential equation with partial information and common noise. {\it  Appl. Math. Comput.}, 446, 127899, 2023.

\bibitem{Huang-Zhou-2020}M. Huang and M. Zhou, Linear quadratic mean field games: Asymptotic solvability and relation to the fixed point approach. {\it IEEE Trans. Automat. Control}, 65, 1397-1412, 2020.

\bibitem{Lasry-Lions-2007}J. Lasry and P. Lions, Mean field games. {\it Jpn. J. Math.}, 2, 229-260, 2007.

\bibitem{Li-Nie-Wang-Yan-2023}M. Li, T. Nie, S. Wang and K. Yan, Incomplete information linear-quadratic mean-field games and related Riccati equations. {\it arXiv:2307.01005}.

\bibitem{Li-Nie-Wu-2023}M. Li, T. Nie and Z. Wu, Linear-quadratic large-population problem with partial information: Hamiltonian approach and Riccati approach. {\it SIAM J. Control Optim.}, 61, 2114-2139, 2023.

\bibitem{Li-Sun-Xiong-2019}X. Li, J. Sun and J. Xiong, Linear quadratic optimal control problems for mean-field backward stochastic differential equations. {\it  Appl. Math. Optim.}, 80, 223-250, 2019.

\bibitem{Ma-Yong-1999}J. Ma and J. Yong, {\it Forward-Backward Stochastic Differential Equations and Their Applications}, Springer-Verlag, New York, 1999.

\bibitem{Majerek-Nowak-Zieba-2005}D. Majerek, W. Nowak and W. Zi\' eba, Conditional strong law of large number. {\it Int. J. Pure Appl. Math.}, 20, 143-156, 2005.

\bibitem{Moon-Basar-2017}J. Moon and T. Ba\c sar, Linear quadratic risk-sensitive and robust mean field games. {\it IEEE Trans. Automat. Control}, 62, 1062-1077, 2017.

\bibitem{Moon-Basar-2018}J. Moon and T. Ba\c sar, Linear quadratic mean field Stackelberg differential games. {\it Automatica}, 97,  200-213, 2018.

\bibitem{Nguyen-Huang-2012}S. Nguyen and M. Huang, Linear-quadratic-Gaussian mixed games with continuum-parametrized minor players. {\it SIAM J. Control Optim.}, 50, 2907-2937, 2012.

\bibitem{Nourian-Caines-Malhame-Huang-2012}M. Nourian, P. Caines, R. Malham\'{e} and M. Huang, Mean field LQG control in leader-follower stochastic multi-agent systems: Likelihood ratio based adaptation. {\it IEEE Trans. Automat. Control}, 57, 2801-2816, 2012.

\bibitem{Shi-Wang-Xiong-2016}J. Shi, G. Wang and J. Xiong, Leader-follower stochastic differential game with asymmetric information and applications. {\it Automatica}, 63, 60-73, 2016.

\bibitem{Si-Wu-2021}K. Si and Z. Wu, Backward-forward linear-quadratic mean-field Stackelberg games. {\it Adv. Difference Equ.}, 73, 23, 2021.

\bibitem{Si-Shi-arXiv2024}Y. Si and J. Shi, Linear-quadratic mean field Stackelberg stochastic differential game with partial information and common noise. {\it arXiv:2405.03102}.

\bibitem{Stackelberg-1952}H. von Stackelberg, {\it The Theory of the Market Economy}, Oxford University Press, London, 1952.

\bibitem{Sen-Caines-2016}N. \c Sen and P. Caines, Mean field game theory with a partially observed major agent. {\it SIAM J. Control Optim.}, 54, 3174-3224, 2016.

\bibitem{Sen-Caines-2019}N. \c Sen and P. Caines, Mean field games with partial observation. {\it SIAM J. Control Optim.}, 57, 2064-2091, 2019.

\bibitem{Wang-2024}B. Wang, Leader-follower mean field LQ games: a direct method. {\it Asian J. Control}, 26, 617-625, 2024.

\bibitem{Wang-Wang-Zhang-2020} G. Wang, Y. Wang and S. Zhang, An asymmetric information mean-field type linear quadratic stochastic Stackelberg differential game with one leader and two followers. {\it Optimal Control Appl. Methods}, 41, 1034-1051, 2020.

\bibitem{Wang-Wu-Xiong-2018}G. Wang, Z. Wu and J. Xiong, {\it An Introduction to Optimal Control of FBSDE with Incomplete Information}, Springer Briefs in Mathematics, Springer, Cham, 2018.

\bibitem{Wang-Zhang-2017}B. Wang and J. Zhang, Social optima in mean field Linear-Quadratic-Gaussian models with Markov jump parameters. {\it SIAM J. Control Optim.}, 55, 429-456, 2017.

\bibitem{Yong-2002}J. Yong, A leader-follower stochastic linear quadratic differential game. {\it SIAM J. Control Optim.}, 41, 1015-1041, 2002.

\bibitem{Yong-Zhou-1999}J. Yong and X. Zhou, {\it Stochastic Controls: Hamiltonian Systems and HJB Equations}, Springer-Verlag, New York, 1999.

\end{thebibliography}
\end{document}